\newif\ifarxiv
\newif\ifau
\pgfplotsset{compat=1.18}
\numberwithin{equation}{section}
\theoremstyle{plain}
\newtheorem{theorem}{Theorem}[section]
\newtheorem{lemma}[theorem]{Lemma}
\theoremstyle{definition}
\newtheorem{definition}[theorem]{Definition}
\newtheorem{example}[theorem]{Example}
\newtheorem{claim}{Claim}[theorem]
\newcommand{\id}{\mathrm{id}}
\newcommand{\tuple}[1]{{\bm{#1}}}
\newcommand{\minion}[1]{{\mathcal #1}}
\newcommand{\minAx}[1]{\minion{A}_{#1}}
\newcommand{\minBx}[1]{\minion{B}_{#1}}
\newcommand{\minCx}[1]{\minion{C}_{#1}}
\newcommand{\minDx}[1]{\minion{D}_{#1}}
\newcommand{\minT}{\minion{T}}
\newcommand{\minAk}{\minion{A}_k}
\newcommand{\minBk}{\minion{B}_k}
\newcommand{\minCk}{\minion{C}_k}
\newcommand{\minDk}{\minion{D}_k}
\newcommand{\minCkp}{\minion{C}_{k+1}}
\newcommand{\minAl}{\minion{A}_l}
\newcommand{\minBl}{\minion{B}_l}
\newcommand{\minClp}{\minion{C}_{l+1}}
\newcommand{\minDl}{\minion{D}_l}
\newcommand{\minAinf}{\minion{A}_{\infty}}
\newcommand{\minBinf}{\minion{B}_{\infty}}
\newcommand{\minCinf}{\minion{C}_{\infty}}
\newcommand{\minDinf}{\minion{D}_{\infty}}
\newcommand{\xxxyy}{(\tuple{xxxyy})}
\newcommand{\xxxxy}{(\tuple{xxxxy})}
\newcommand{\yyyyx}{(\tuple{yyyyx})}
\newcommand{\xxyyy}{(\tuple{xxyyy})}
\newcommand{\ess}{\mathrm{ess}}
\newcommand{\clone}[1]{{\mathcal #1}}
\DeclareMathOperator{\Pol}{Pol}
\DeclareMathOperator{\Polid}{IdPol}
\DeclareMathOperator{\Clo}{Clo}
\DeclareMathOperator{\rank}{rank}
\newcommand{\OR}{\ \mathrm{or}\ }
\newcommand{\AND}{\ \mathrm{and}\ }
\newcommand{\true}{\mathrm{true}}
\newcommand{\false}{\mathrm{false}}
\newcommand{\descript}{\mathtt{D}}
\newcommand{\dd}[1]{\mathtt{#1}}
\newcommand{\idemp}{\mathcal{I}}
\newcommand{\chrd}{\mathrm{chr}(\descript)}
\begin{document}

\title[Multisorted Boolean Clones]{Multisorted Boolean Clones Determined by Binary Relations up to Minion Homomorphisms}

\ifarxiv
\author[L. Barto]{Libor Barto}
\fi
\ifau
\corrauthor[L. Barto]{Libor Barto}
\fi
\address{Department of Algebra, Faculty of Mathematics and Physics, Charles University in
Prague, Sokolovská 83, 18675 Prague 8, Czech Republic}
\email{libor.barto@gmail.com}

\author[M. Kapytka]{Maryia Kapytka}
\address{Department of Algebra, Faculty of Mathematics and Physics, Charles University in
Prague, Sokolovská 83, 18675 Prague 8, Czech Republic}
\email{mariagrodno@gmail.com}

\thanks{
 Both authors were funded by the European Union (ERC, POCOCOP, 101071674). Views and opinions expressed are however those of the authors only and do not necessarily reflect those of the
European Union or the European Research Council Executive Agency. Neither the European Union nor the granting authority
can be held responsible for them.
A part of this work appeared as the Master thesis \emph{Minion Cores of Clones} of M. Kapytka, Department of Algebra, Faculty of Mathematics and Physics, Charles University, 2023.}

\subjclass{08B05, 03B50, 08A70}

\keywords{Clone, Minion, Minion homomorphism, Minion core, Multisorted Boolean clone, Pp-constructibility poset}

\begin{abstract}
We describe the ordering of a class of clones by minion homomorphisms, also known as minor preserving maps or height 1 clone homomorphisms. 
The class consists of all clones on finite sets determined by binary relations whose projections to both coordinates have at most two elements. This class can be alternatively described up to minion homomorphisms as the class of multisorted Boolean clones determined by binary relations.
We also introduce and apply the concept of a minion core which provides  canonical representatives for equivalence classes of clones, more generally minions, on finite sets.
\end{abstract}

\maketitle

\section{Introduction}\label{sec:intro}

Two algebras that have the same clones of term operations share many properties. For instance, they have the same invariant relations, in particular, the same subuniverses and congruences. Such algebras can thus be considered essentially equal for many purposes. For this reason, finding all clones is an ultimate classification project in universal algebra. 

The most classical positive result in this direction is Post's classification \cite{Post} of all clones on a two-element set, also called \emph{Boolean clones}. Such clones are naturally ordered by inclusion and the resulting lattice is the well-known Post lattice. Unfortunately, a full classification seems currently out of reach already for clones on a three-element set. A notable partial result is Zhuk's  classification of the so-called self-dual clones on a three-element set  \cite{ZhukSelfdual}. Interestingly, the lattice of these clones has continuum cardinality, yet the full description is quite manageable.

A coarser classification of clones (in some class) may be simpler and can still be sufficient for some purposes. For instance, if our primary interest is in identities (more specifically, Mal'cev conditions) rather than concrete operations in the clone, then we can consider two clones equal if they are \emph{homomorphically equivalent}, that is, there are clone homomorphisms between them in both directions. The equivalence classes of clones are then ordered by the existence of a clone homomorphism. In the resulting partially ordered class, the larger a clone is, the more Mal'cev conditions it satisfies. This ordered class is a lattice, it is isomorphic to the lattice of interpretability types of varieties \cite{Neumann,GarciaTaylor}; more precisely, to its sublattice induced by varieties with no constant symbols.  

The ordered set of clones on small sets \emph{does} become simpler in this regime, for instance, the left and right sides of the Post lattice get identified%
\footnote{We are not aware of an explicit description of Boolean clones up to clone homomorphism in the literature.}.
However, it does not seem to become substantially simpler, for instance, there are still continuum many equivalence classes of self-dual clones on a three-element set~\cite{selfdual}.

A still coarser classification project emerged in connection with fixed-template constraint satisfaction problems in \cite{wonderland}. As before, we consider two clones equivalent if they are homomorphically equivalent, and order the equivalence classes by the existence of a homomorphism. The difference is that we weaken the definition of a homomorphism and only require the preservation of minors (minor of an operation is, roughly, obtained by identifying and permuting variables). Such homomorphisms were called height 1 clone homomorphisms in \cite{wonderland} because they can be equivalently defined as arity-preserving functions that preserve identities with exactly one occurrence of a function symbol on both sides (while clone homomorphisms must preserve all identities). They are also called minor preserving maps or \emph{minion homomorphisms}; we use the latter term in this paper and refer to the partially ordered class as the \emph{minion homomorphism poset}%
\footnote{The whole class forms a distributive lattice by an unpublished result of A. Kazda and M. Moore.}.
We also say that we classify clones \emph{up to minion homomorphisms}.

The poset of clones on small sets now gets significantly simpler. For Boolean clones, it becomes a very simple planar lattice, even though it is still countably infinite \cite{BodirskyVucaj}. A more dramatic collapse happens already on a three-element set, for instance, the continuum-sized lattice of self-dual clones becomes a nice countable lattice~\cite{selfdual}. In fact, it is consistent with the current knowledge that there are only countably many clones on finite sets up to minion homomorphisms.

Recall that clones on a finite set can be described by means of relations~\cite{Bodnarcuk:1969,Geiger:1968}: the set of operations that preserve a given set of relations is always a clone, and every clone on a finite set is of this form. The relational counterpart of minion homomorphisms is primitive positive (pp-) constructions \cite{wonderland}. It follows that the minion homomorphism poset for clones on finite sets is isomorphic to the pp-constructibility poset for finite relational structures. Primitive positive constructions are also often directly used to prove inequalities in the poset \cite{smooth,submaximal,submaximalGraphs}. Even though we  deal with clones explicitly described by relations, we prove the inequalities by directly providing minion homomorphisms.

Our main result describes the subposet of the minion homomorphism poset induced by clones on finite sets that are determined by binary relations whose projections onto both coordinates are at most two-element. This class of clones emerged in an ongoing project of the first author with F. Starke, A. Vucaj, J. Zah\'alka, and D. Zhuk whose goal is to classify the subposet induced by clones on a three-element set determined by binary relations%
\footnote{There are about 2 million such clones as announced in \cite{ZhukMoiseev}, they collapse to few hundreds.}. 
This investigation suggests that binary relations with at most two-element projections onto coordinates play an important role and, as the present paper shows, such clones can be classified up to minion homomorphisms without the restriction to size three. 

It turns out that our class of clones can be equivalently (up to minion homomorphisms) described in a nicer way as the class of multisorted Boolean clones determined by binary relations. Here a \emph{$k$-sorted Boolean clone} is a set of $k$-tuples of Boolean operations with closure properties analogical to standard clones%
\footnote{
There is at least one substantially different multisorted setting in the literature, e.g., the one used in \cite{LPW}. The version we use is a natural choice in some contexts, such as  constraint satisfaction problems with multiple domains.
}. 
Alternatively, a $k$-sorted Boolean clone can be defined as a clone on $\{0,1\}^k$ whose operations act component-wise on $k$-tuples. Multisorted clones appear naturally in clone theory \cite{romov1973lattice} and the theory of constraint satisfaction problems \cite{BulatovMultisorted}. The full classification of multisorted Boolean clones may be challenging (even up to minion homomorphic equivalence) but tractable: Taimanov proved in the 80s (recently published in English \cite{Taimanov1,Taimanov2,Taimanov3}) that there are only countably many such multisorted clones. 

Apart from the classification of a class of clones up to minion homomorphisms, this paper makes a conceptual contribution by introducing the concept of a \emph{minion core}, which is a minion (roughly, a set of operations closed under taking minors) such that each of its endomorphisms is an automorphism. We show that each clone on a finite set, more generally each abstract minion that satisfies a certain finiteness condition, has an essentially unique equivalent minion core. Minion cores thus provide canonical representatives for equivalence classes of clones (or minions) on finite sets. We provide minion cores for the clones in the above class.

The paper is organized as follows. The preliminary~\Cref{sec:prelim} introduces the necessary concepts. In \Cref{sec:cores}, we define minion cores and prove their existence and uniqueness. \Cref{sec:classif} is the main part of this work. We classify the multisorted Boolean clones determined by binary relations up to minion homomorphisms and find their minion cores; the cores and their ordering is shown in \Cref{fig:order0} and later in more detail in \Cref{fig:order}. \Cref{sec:small-projections} shows that this class is equivalent to the class of clones determined by binary relations with at most two-element projections. We provide some concluding remarks in \Cref{sec:conclusion}. 

\begin{figure}[ht]
\begin{center}
\resizebox{4.5cm}{!}{\includegraphics{first_picture.tikz}}
\caption{The minion homomorphism poset of multisorted Boolean clones determined by binary relations.} 
\label{fig:order0}
\end{center}
\end{figure}

\section{Clones and minions} \label{sec:prelim}

In this section, we briefly introduce function clones (\Cref{subsec:function-clone}) and function minions (\Cref{subsec:function-minion}), including the multisorted version of these concepts that we work with. We remark that function clones and function minions are often called just clones and minions; this includes the introduction of this paper. We prefer to use the shorter variants for abstract versions of the concepts. 

We then formally introduce (abstract) minions, their homomorphisms, and the induced partially ordered class in \Cref{subsec:minion-homo}. We further make a few observations about homomorphisms between function minions in \Cref{subsec:homo-function-minion}. 

The set of positive integers is denoted by $\mathbb{N}$. For a nonnegative integer $n$ we use the notation
$[n] = \{1,2, \dots, n\}$.
Tuples of elements, functions, or sets are written in boldface and their elements as in, e.g., $\tuple{a} = (a_1, a_2, \dots, a_n)$.
We work with finitary operations of arity at least one. An \emph{$n$-ary operation on $A$} is a function $A^n \to A$. An \emph{operation on $A$} is an $n$-ary operation for some $n \in \mathbb{N}$.

\subsection{Function clones} \label{subsec:function-clone}

A \emph{function clone} on $A$ is a set of operations on $A$ that contains all the projections and is closed under composition. Our notation is as follows.
For a positive integer $n$ and $i \in [n]$, the $n$-ary projection to the $i$th coordinate is denoted $\pi^n_i$, that is,
\[
\pi^n_i (a_1, \dots, a_n) = a_i \mbox{ for every } a_1, \dots, a_n \in A,
\]
where the set $A$ on which $\pi^n_i$ acts should be clear from the context.
For an $n$-ary operation $f$ on $A$ and $m$-ary operations $g_1, \dots, g_n$ on $A$, their composition is denoted by $f(g_1, \dots, g_n)$ or $f \circ (g_1, \dots, g_n)$; we use the latter when we want to emphasize that we are composing.
\[
(f \circ (g_1, \dots, g_n)) (a_1, \dots, a_m) = f (g_1(a_1, \dots, a_m), \dots, g_n(a_1, \dots, a_m))
\]
An $n$-ary operation $f$ on $A$ \emph{preserves} an $m$-ary relation $R$ on $A$ (i.e., a subset of $A^m$ for a positive integer $m$) if $f$ applied component-wise to tuples in $R$ always results in a tuple in $R$. If $\Gamma$ is a set of relations on $A$, an operation $f$ that preserves all relations in $\Gamma$ is called a \emph{polymorphism} of $\Gamma$. The set of all polymorphisms of $\Gamma$ is denoted by $\Pol(\Gamma)$. It is always a function clone and every function clone on a finite set $A$ is of this form: this follows from properties of the Galois correspondence between operations and relations established in~\cite{Bodnarcuk:1969,Geiger:1968}.

A straightforward generalization of function clones to multisorted sets goes as follows.
 By a \emph{$k$-sorted set} we mean a $k$-tuple of sets, e.g., $\tuple{A} = (A_1, A_2, \dots, A_k)$. An \emph{$n$-ary ($k$-sorted) operation} on $(A_1, A_2, \dots, A_k)$ is defined as a $k$-tuple $\tuple{f}=(f_1, f_2, \dots, f_k)$, where $f_i$ is an $n$-ary operation on $A_i$. The \emph{$n$-ary projection on $\tuple{A}$ to the $i$th coordinate} (where $i \in [n]$), denoted by $\tuple{\pi}^n_i$, is the $k$-sorted operation defined by
 $$
 \tuple{\pi}^n_i = (\underbrace{\pi^n_i, \dots, \pi^n_i}_\text{$k$ times}),
 $$
 where the $j$th projection in the tuple is on the set $A_j$.

Let $\tuple f = (f_1, f_2, \dots, f_k)$ be an $n$-ary operation on a $k$-sorted set $\tuple A$ and $\tuple g^1 = (g^1_1, g^1_2, \dots, g^1_k)$, $\tuple g^2= (g^2_1, g^2_2, \dots, g^2_k)$, \dots, $\tuple g^n= (g^n_1, g^n_2, \dots, g^n_k)$ be $m$-ary operations on the same $k$-sorted set $\tuple A$. Then their composition, denoted as $\tuple f \circ (\tuple g^1, \tuple g^2, \dots, \tuple g^n)$, is the $m$-ary operation on $\tuple{A}$ defined component-wise:
$$
\tuple f \circ (\tuple g^1, \tuple g^2, \dots, \tuple g^n) \\
= (f_1 \circ (g^1_1, g^2_1, \dots, g^n_1), \dots, f_k \circ (g^1_k, g^2_k, \dots, g^n_k))
$$

\begin{definition} 
A \emph{function clone} on a $k$-sorted set $\tuple{A}$ is a set of operations on $\tuple{A}$ that contains all the  projections on $\tuple{A}$ and is closed under composition. 
A \emph{Boolean $k$-sorted clone} is a function clone on the \emph{$k$-sorted Boolean set} $(\{0,1\},\{0,1\}, \dots, \{0,1\})$.
\end{definition}
 
An $m$-ary \emph{relation} on a $k$-sorted set $\tuple{A}=(A_1, A_2, \dots, A_k)$ of type $(i_1, \dots, i_m)$, where  $i_1, \dots, i_m \in [k]$, is a subset $R \subseteq A_{i_1} \times A_{i_2} \times \dots \times A_{i_m}$. We say that an $n$-ary operation $\tuple{f} = (f_1, \dots, f_k)$ on $\tuple{A}$ \emph{preserves} such a relation $R$  if the following implication holds. 
$$
\begin{pmatrix} r_{11} \\ r_{12} \\ \vdots \\ r_{1m} \end{pmatrix}, \begin{pmatrix} r_{21} \\ r_{22} \\ \vdots \\ r_{2m} \end{pmatrix}, \dots, \begin{pmatrix} r_{n1} \\ r_{n2} \\ \vdots \\ r_{nm} \end{pmatrix} \in R \implies \begin{pmatrix}
    f_{i_1}(r_{11}, r_{21}, \dots, r_{n1}) \\ f_{i_2}(r_{12}, r_{22}, \dots, r_{n2}) \\ \vdots \\ f_{i_m}(r_{1m}, r_{2m}, \dots, r_{nm})
\end{pmatrix}  \in R
$$
The type of $R$ is regarded as a part of the definition of $R$, so, formally, a $k$-sorted relation is, e.g., a tuple $(R,i_1, \dots, i_m)$. 
We define $\Pol(\Gamma)$ analogously as in the single-sorted setting. As before, $\Pol(\Gamma)$ is a function clone on $\tuple{A}$ and, if all the $A_i$ are finite, every function clone on $\tuple{A}$ is of this form \cite{romov1973lattice}.

Finally, we give a name to the property of relations studied in \Cref{sec:small-projections}.
\begin{definition}
A relation $R \subseteq A_{i_1} \times A_{i_2} \times \dots \times A_{i_m}$ is said to have \emph{small projections} if the projection of $R$ onto every coordinate $j \in [m]$ has size at most two. 
\end{definition}

\subsection{Function minions} \label{subsec:function-minion}

Let $A$ and $B$ be sets and let $n$ be a positive integer. An \emph{$n$-ary operation from $A$ to $B$} is a function from $A^n$ to $B$. 
In general, composition as defined above does not make sense for operations from $A$ to $B$. However, it makes sense to compose such operations with projections on $A$. Explicitly, for an $n$-ary operation $f$ from $A$ to $B$, a positive integer $m$, and $i_1, \dots, i_n \in [m]$, the composition $f \circ (\pi^m_{i_1},\pi^m_{i_2}, \dots, \pi^m_{i_n})$ is the following $m$-ary operation from $A$ to $B$.
$$
(f \circ (\pi^m_{i_1},  \dots, \pi^m_{i_n})) (a_1,  \dots, a_m) = f(a_{i_1},  \dots, a_{i_n})
$$
The operation $f \circ (\pi^m_{i_1},\pi^m_{i_2}, \dots, \pi^m_{i_n})$ is also called a \emph{minor} of $f$. 
A \emph{function minion} on $(A,B)$ is a nonempty set of operations from $A$ to $B$ that is closed under taking minors. The same or similar concept has been also called a minor closed class, clonoid, or minion in the literature. We use the latter for an abstract version of the concept.    

The concept of preserving relations generalizes as follows: An $n$-ary operation $f$ on $A$ \emph{preserves} $(R,S)$, where $R$ is a relation on $A$ and $S$ is a relation on $B$ of the same arity as $R$, if  $f$ applied component-wise to tuples in $R$ always results in a tuple in $S$. The  correspondence between operations and relations in this regime was worked out in \cite{Pippenger}, see also \cite{BG} and \cite{PCSP}.

We generalize function minions to the multisorted setting similarly as we generalized function clones. 
Let $\tuple{A} = (A_1, A_2, \dots, A_k)$ and $\tuple{B}=(B_1,B_2, \dots, B_k)$ be $k$-sorted sets. An \emph{$n$-ary operation from $\tuple{A}$ to $\tuple{B}$} is a $k$-tuple $\tuple{f}=(f_1, f_2, \dots, f_k)$, where $f_i$ is an $n$-ary operation from $A_i$ to $B_i$. 
Note that operations from $\tuple{A}$ to $\tuple{B}$ can still be composed with projections on $\tuple{A}$. Explicitly,
if  $\tuple f = (f_1, f_2, \dots, f_k)$ is a $k$-sorted $n$-ary operation from $\tuple{A}$ to $\tuple{B}$, $m$ is a positive integer, and $i_1, \dots, i_n \in [m]$, then the $j$-th component ($j \in [k]$) of $\tuple f \circ (\tuple{\pi}^m_{i_1}, \dots, \tuple{\pi}^m_{i_m})$ is $f_j \circ (\pi^m_{i_1}, \dots, {\pi}^m_{i_n})$, which is an $m$-ary operation from $A_j$ to $B_j$. As above, the obtained operation from $\tuple{A}$ to $\tuple{B}$ is called a \emph{minor} of $\tuple{f}$. 

We also use an alternative notation for minors. If $\tuple{f}$ is an $n$-ary operation from $\tuple{A}$ to $\tuple{B}$ and $\alpha: [n] \to [m]$, then $\tuple{f}^{(\alpha)}$ is the $m$-ary operation
$$
\tuple{f}^{(\alpha)} = \tuple f \circ (\tuple{\pi}^m_{\alpha(1)}, \tuple{\pi}^m_{\alpha(2)}, \dots, \tuple{\pi}^m_{\alpha(n)}).
$$

 \begin{definition}\label{def:functionminion}
 Let $\tuple{A}$ and $\tuple{B}$ be $k$-sorted sets.
     \emph{A function minion} $\minion{M}$ on $(\tuple{A},\tuple{B})$ is a nonempty set of operations from $\tuple{A}$ to $\tuple{B}$ which is closed under taking minors. A \emph{Boolean $k$-sorted minion} is a function minion on $(\tuple{A},\tuple{A})$, where $\tuple{A}$ is the $k$-sorted Boolean set $(\{0,1\}, \dots, \{0,1\})$.
     
     For a positive integer $n$, we denote by $\minion{M}^{(n)}$ the set of all $n$-ary members of $\minion M$ and call it the \emph{$n$-ary part of $\minion{M}$}.
 \end{definition}

\subsection{Minions and homomorphism} \label{subsec:minion-homo}

A homomorphism between function minions is a mapping that preserves arities and minors.
This concept does not depend on concrete operations in the function minions, it only depends on the mappings $f \mapsto f^{(\alpha)}$. It is therefore appropriate to work with an abstraction of function minions that carries exactly this information, minions. An efficient way to define a minion is as a finitary functor from the category of nonempty sets to itself; homomorphisms are then natural transformations. The following definitions essentially spell this out without the categorical language.

\begin{definition} \label{def:minion}
An (abstract) \textit{minion} $\minion M$ consists of a collection of nonempty sets $(\minion M^{(n)})_{n \in \mathbb{N}}$, together with a \textit{minor map} ${\minion M}^{(\alpha)}:  {\minion M}^{(n)}\to  {\minion M}^{(m)}$ for every function $\alpha: [n]\to [m]$, which satisfies that ${\minion M}^{(\id_{[n]})}= \id_{{\minion M}^{(n)}}$ for all $n \in \mathbb{N}$ and ${\minion M}^{(\alpha)} \circ {\minion M}^{(\beta)} = \minion M^{(\alpha \circ \beta)}$ whenever such a composition makes sense. 
\end{definition} 

Every function minion $\minion{M}$ on a multisorted set is a minion by setting $\minion{M}^{(n)}$ to be the $n$-ary part of $\minion{M}$ (consistently with \Cref{def:functionminion}) and $\minion{M}^{(\alpha)}(\tuple{f}) = \tuple{f}^{(\alpha)}$.

\begin{definition} \label{def:minionhomo}
    Let $\minion M $ and $\minion N$ be minions.
    A \emph{homomorphism} from $\minion M $ to  $\minion N$, written $\xi: \minion M \to \minion N$,  is a collection of functions $(\xi^{(n)}: \minion M^{(n)} \to \minion N^{(n)})_{n \in \mathbb{N}}$ that preserves taking minors, that is, $\xi^{(m)}(\minion M^{(\alpha)}(f))={\minion N}^{(\alpha)}(\xi^{(n)}(f))$ for every $n, m \in \mathbb{N}$, $f \in \minion{M}^{(n)}$, and $\alpha:[n] \to [m]$. We sometimes write $\xi$ instead of $\xi^{(n)}$ if $n$ is clear from the context. 
\end{definition}

We work with homomorphisms in the above sense between function clones. In order to avoid confusion with clone homomorphism (which we do not formally define here), we refer to them as \emph{minion homomorphism}.

Composition of minion homomorphisms, minion isomorphisms, minion automorphisms, etc. are defined in the expected way. Notice that $\xi$ is a minion isomorphism (invertible with respect to composition) if, and only if, each $\xi^{(n)}$ is a bijection. 

We preorder the class of all minions by the existence of minion homomorphisms and define the induced equivalence in the standard way. 

\begin{definition}
    For minions $\minion{M}$ and $\minion{N}$ we write $\minion{M} \leq \minion{N}$ if there exists a minion homomorphism $\minion{M} \to \minion{N}$. We say that minions $\minion{M}$ and $\minion{N}$ are \emph{equivalent}, written $\minion{M} \sim \minion{N}$, if $\minion{M} \leq \minion{N} \leq \minion{M}$.
\end{definition}

We study a part of the partially ordered class whose elements are $\sim$-equivalence classes of minions (in fact, clones) ordered by $\leq$. We informally say that we order minions by minion homomorphisms, although the elements of the partially ordered class are $\sim$-equivalence classes rather than single minions.

The largest minion is, e.g., the unique (one-sorted) function clone $\clone{T}$ on a one-element set.
$$
\clone{T} = \text{ the clone on $\{1\}$}.
$$

The smallest minion is, e.g., the (one-sorted) function clone $\clone{P}$ on the set $\{0,1\}$ containing only projections. 
$$
\clone{P} = \text{ the clone of projections on \{0,1\} }
$$
This minion is isomorphic to the minion given by $\minion{M}^{(n)} = [n]$, $\minion M^{(\alpha)} = \alpha$.

\subsection{Homomorphisms between function minions} \label{subsec:homo-function-minion}

Since our work concerns minion homomorphisms between function minions, we make a few observations in such a context.

For function minions $\minion{M}$, $\minion{N}$ on pairs of multisorted sets $(\tuple A,\tuple B)$ and $(\tuple{C},\tuple{D})$, a collection $\xi=(\xi^{(n)}:\minion M^{(n)} \to \minion{N}^{(n)})_n$ is a minion homomorphism from $\minion{M}$ to $\minion{N}$ if, and only if, $\xi^{(m)}(\tuple f^{(\alpha)}) = (\xi^{(n)}(\tuple{f}))^{(\alpha)}$ for all $m,n \in \mathbb{N}$ and $\alpha:[n] \to [m]$; equivalently,  
        $$
        \xi^{(m)} (\tuple f \circ (\tuple \pi^m_{i_1}, \dots, \tuple \pi^m_{i_n})) = \xi^{(n)} (\tuple f) \circ (\tuple \pi^m_{i_1}, \dots, \tuple \pi^m_{i_n})
        $$
for all $n \in \mathbb{N}$, $\tuple f \in \minion{M}^{(n)}$, $m \in \mathbb{N}$, and $i_1, i_2, \dots, i_n \in [m]$. (Note that the projections on the left hand side are on $\tuple{A}$, while they are on $\tuple{C}$ on the right hand side.)

This can be interpreted as preserving height 1 identities, that is, identities with exactly one function symbol on both sides. For instance, if $\tuple{f} \in \minion{M}$ satisfies the identity $f(x,x,y) \approx f(y,y,x)$, equivalently,  
$$
\tuple{f} \circ (\tuple{\pi}^3_1, \tuple{\pi}^3_1, \tuple{\pi}^3_2) =
\tuple{f} \circ (\tuple{\pi}^3_2, \tuple{\pi}^3_2, \tuple{\pi}^3_1),
$$
then $\tuple{g}=\xi(\tuple{f})$ satisfies the corresponding  identity $g(x,x,y) \approx g(y,y,x)$, equivalently,
$$
\xi(\tuple{f}) \circ (\tuple{\pi}^3_1, \tuple{\pi}^3_1, \tuple{\pi}^3_2) =
\xi(\tuple{f}) \circ (\tuple{\pi}^3_2, \tuple{\pi}^3_2, \tuple{\pi}^3_1).
$$
Indeed, this fact is deduced by applying $\xi$ to both sides of the first displayed equation and using that $\xi$ is a minion homomorphism.

The final observation in particular says that minion homomorphisms between Boolean multisorted minions are fully determined by their binary parts.

\begin{lemma} \label{lem:binaries}
Let $\minion M$ be a minion and $\minion N$ a function minion on a pair of $k$-sorted sets $(\tuple{A},\tuple{B})$. Let $m$ be greater than or equal to the size of each sort $A_i$.
If $\xi,\nu: \minion{M} \to \minion{N}$ are minion homomorphisms such that $\xi^{(m)} = \nu^{(m)}$, then $\xi = \nu$.
\end{lemma}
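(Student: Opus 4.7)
The plan is to prove the lemma by directly establishing, for every $n \in \mathbb{N}$ and every $\tuple{f} \in \minion{M}^{(n)}$, the equality $\xi^{(n)}(\tuple{f}) = \nu^{(n)}(\tuple{f})$. Since these are elements of the function minion $\minion{N}$ on $(\tuple{A},\tuple{B})$, they are $k$-tuples of ordinary operations, and equality reduces to checking, for every sort $j \in [k]$ and every input $\tuple{a} = (a_1,\dots,a_n) \in A_j^n$, that
\[
\bigl(\xi^{(n)}(\tuple{f})\bigr)_j(\tuple{a}) = \bigl(\nu^{(n)}(\tuple{f})\bigr)_j(\tuple{a}).
\]
The whole argument will amount to factoring this evaluation through an $m$-ary minor of $\tuple{f}$, where $\xi$ and $\nu$ already agree by hypothesis.

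The key observation is that since $|A_j| \leq m$, the tuple $\tuple{a}$ takes at most $m$ distinct values. So I can pick some $\tuple{b} = (b_1,\dots,b_m) \in A_j^m$ (for instance an enumeration of $A_j$, padded arbitrarily) together with a map $\alpha\colon [n] \to [m]$ such that $a_i = b_{\alpha(i)}$ for all $i \in [n]$. This same $\alpha$ will be used to form the $m$-ary minor $\tuple{f}^{(\alpha)} \in \minion{M}^{(m)}$ of $\tuple{f}$; note that only the chosen sort $j$ constrains the choice of $\alpha$, which is fine because we are proving equality one sort and one input at a time.

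With these choices, unravelling the definition of a minor in a function minion gives, for any $\tuple{g} \in \minion{N}^{(n)}$, the identity $g_j(\tuple{a}) = g_j(b_{\alpha(1)},\dots,b_{\alpha(n)}) = \bigl(\tuple{g}^{(\alpha)}\bigr)_j(\tuple{b})$. Applying this to $\tuple{g} = \xi^{(n)}(\tuple{f})$ and using that $\xi$ commutes with minor maps (\Cref{def:minionhomo}) yields
\[
\bigl(\xi^{(n)}(\tuple{f})\bigr)_j(\tuple{a}) = \bigl(\xi^{(m)}(\tuple{f}^{(\alpha)})\bigr)_j(\tuple{b}),
\]
and the same calculation for $\nu$ gives the analogous identity. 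Since $\tuple{f}^{(\alpha)} \in \minion{M}^{(m)}$ and $\xi^{(m)} = \nu^{(m)}$ by hypothesis, the two right-hand sides coincide, and hence so do the two left-hand sides. Running $j$ and $\tuple{a}$ over all possibilities yields $\xi^{(n)}(\tuple{f}) = \nu^{(n)}(\tuple{f})$, which is what was needed.

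There is really no hard step here; the only point requiring a moment of attention is verifying that a single $\alpha$ works even though $\tuple{f}$ is multisorted. This is not an obstacle because we fix a sort $j$ before choosing $\alpha$, and the minor map $\minion{M}^{(\alpha)}$ is defined on the whole multisorted tuple regardless of which component we eventually evaluate.
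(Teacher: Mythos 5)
Your proof is correct and follows essentially the same route as the paper's: fix a sort $j$ and an input tuple $\tuple{a}$, choose $\alpha\colon[n]\to[m]$ and $\tuple{b}\in A_j^m$ with $a_i=b_{\alpha(i)}$ (possible since $|A_j|\le m$), and factor the evaluation through the $m$-ary minor $\tuple{f}^{(\alpha)}$, where $\xi^{(m)}=\nu^{(m)}$ applies. No issues.
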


\begin{proof}
   Let $n \in \mathbb{N}$ and $f \in \minion{M}^{(n)}$. We need to verify that $\xi^{(n)}(f) = \nu^{(n)}(f)$, which we do by checking that for every sort index $i$ and every $\tuple{a} \in A_i^n$, we have $(\xi^{(n)}(f))_i(\tuple{a}) = (\nu^{(n)}(f))_i(\tuple{a})$.
   Take  $\tuple{c} \in A_i^m$ and $\alpha: [n] \to [m]$ such that $c_{\alpha(j)} = a_j$ for every $j \in [n]$, which is possible since $|A_i| \leq m$. Using the definition of minors and the fact that $\xi$ preserves them, we obtain 
   $$
   (\xi^{(n)}(f))_i(\tuple{a})
   =(\xi^{(n)}(f))_i^{(\alpha)}(\tuple c)
   =(\xi^{(m)}(\minion{M}^{(\alpha)}(f)))(\tuple c).
   $$
   The analogous calculation for $\nu$ and the assumption $\xi^{(m)}=\nu^{(m)}$ now finishes the proof.
\end{proof}

\section{Cores} \label{sec:cores}

For every set of relations $\Gamma$ on a finite set $A$, there exists a set of relations $\Delta$ on a finite $B$, the \emph{idempotent core of $\Gamma$}, which is in some sense uniquely determined by $\Gamma$, it contains all the singleton unary relations $\{b\}$ ($b \in B$), and $\Pol(\Gamma) \sim \Pol(\Delta)$. One gain is that $\Pol(\Delta)$ is idempotent and such function clones are somewhat easier to work with. In fact, the aim to identify the function clones $\Pol(\Gamma)$ and $\Pol(\Delta)$ was among the motivations for introducing minion homomorphisms between clones in \cite{wonderland}, called height one (h1) clone homomorphisms therein.
We review the necessary preliminaries in \Cref{subsec:idemp}.

\Cref{subsec:minioncore} introduces the concept of a minion core, which is a natural version of the concept of a core for abstract minions.

\subsection{Idempotent cores} \label{subsec:idemp}

An operation $f \colon A^n \to A$ is \emph{idempotent} if $f(a,a, \dots, a)=a$ for every $a \in A$. In other words, $f$ is idempotent if it preserves all the singleton unary relations $\{a\}$. 
Similarly, an $n$-ary operation $\tuple{f}$ on a multisorted set $\tuple{A}$ is \emph{idempotent} if so are all the components $f_i$. This can be written as $\tuple{f}(\tuple{\pi}^1_1, \dots, \tuple{\pi}^1_1) = \tuple{\pi}^1_1$.  A function clone on $\tuple{A}$ or, more generally, a function minion on $(\tuple{A},\tuple{A})$ is called \emph{idempotent} if so are all of its members.

We denote by $\idemp(\tuple{A})$ the clone of all idempotent operations on $\tuple{A}$. 
For a set $\Gamma$ of  relations on a multisorted set$\tuple{A}$, we write $\Polid(\Gamma)$ for the idempotent part of $\Pol(\Gamma)$:
$$
\Polid(\Gamma) = \Pol(\Gamma) \cap \idemp(\tuple{A})
$$

Every function clone on a finite set is $\sim$-equivalent to an idempotent clone \cite{wonderland}. This fact extends to multisorted clones in a straightforward way. The following formulation will be convenient for our purposes.

\begin{theorem} \label{thm:rel-cores}
  Let $\tuple{A}=(A_1, \dots, A_k)$ be a $k$-sorted set with each $A_i$ finite and let $\Gamma$ be a set of relations on $\tuple{A}$ of arity at most $m \in \mathbb{N}$ such that $\Pol(\Gamma) \not\sim \clone{T}$. Then there exists a $l$-sorted ($l \in \mathbb{N}$) set $\tuple{B}=(B_1, B_2, \dots, B_{l})$ and a set $\Delta$  of relations on $\tuple{B}$ of arity at most $m$ such that
  \begin{itemize}
    \item $\Pol(\Gamma) \sim \Polid(\Delta) = \Pol(\Delta)$  
    \item $1 < |B_i| \leq \max_{j \in \{1, \dots, k\}} |A_j|$ for every $i \in [l]$
    \item If every relation in $\Gamma$ has small projections, then so does every relation in $\Delta$.
  \end{itemize}
\end{theorem}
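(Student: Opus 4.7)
The plan is to adapt the classical single-sorted construction of an idempotent core to the multisorted setting, centered on a well-chosen idempotent unary polymorphism. First, I would choose an idempotent unary $\tuple u = (u_1, \ldots, u_k) \in \Pol(\Gamma)$ minimizing $\prod_{i=1}^k |u_i(A_i)|$; such a $\tuple u$ exists because the identity is in $\Pol(\Gamma)$ and every element of the finite semigroup of unary polymorphisms has an idempotent power. Set $B_i := u_i(A_i)$, $I := \{i \in [k] : |B_i| \geq 2\}$, $l := |I|$, $\tuple B := (B_i)_{i \in I}$, and for each $R \in \Gamma$ of type $(i_1, \ldots, i_m)$ let
\[
R' = \pi_J\bigl(R \cap (B_{i_1} \times \cdots \times B_{i_m})\bigr), \qquad J := \{j \in [m] : i_j \in I\},
\]
viewed as a relation on $\tuple B$ of type $(i_j)_{j \in J}$. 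Set $\Delta := \{R' : R \in \Gamma\} \cup \{\{b\} : b \in B_i,\, i \in I\}$. The singleton relations force $\Pol(\Delta) = \Polid(\Delta)$; the bound $|B_i| \leq \max_j |A_j|$ is immediate; and smallness of projections is preserved under restriction and projection, so two of the three bullets are clear.

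The main work is to establish $\Pol(\Gamma) \sim \Pol(\Delta)$ by exhibiting minion homomorphisms in both directions. For $\eta \colon \Pol(\Delta) \to \Pol(\Gamma)$, given $\tuple g \in \Pol(\Delta)^{(n)}$ I would define a $k$-sorted $\eta(\tuple g)$ on $\tuple A$ by setting its $i$-th component to $\tuple a \mapsto g_i(u_i(a_1), \ldots, u_i(a_n))$ when $i \in I$ and to the constant $b_i$ (where $\{b_i\} = B_i$) when $i \notin I$. Since $\tuple u$ preserves $\Gamma$, applying $\tuple u$ componentwise to tuples of $R \in \Gamma$ lands in $R \cap (B_{i_1} \times \cdots \times B_{i_m})$; projecting to $J$-coordinates yields tuples in $R'$; then $\tuple g$ keeps us in $R'$, and the dropped coordinates are forced to their unique values, reconstructing an element of $R$. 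Minor-preservation of $\eta$ is a direct computation.

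The reverse direction $\xi \colon \Pol(\Gamma) \to \Pol(\Delta)$ is more delicate, because the naive map $\tuple f \mapsto (\tuple u \circ \tuple f)|_{\tuple B^n}$ preserves each $R'$ but need not be idempotent. The fix uses the minimality of $\tuple u$ to show that for every unary polymorphism $\tuple h \in \Pol(\Gamma)$, the map $\sigma_i := (u_i \circ h_i)|_{B_i}$ is a permutation of $B_i$: iterating $\tuple v := \tuple u \circ \tuple h \circ \tuple u$ gives an idempotent unary polymorphism $\tuple v^N$ whose $i$-th image equals $\sigma_i^N(B_i) \subseteq B_i$, and minimality of $\prod |B_i|$ forces $\sigma_i^N(B_i) = B_i$ and hence $\sigma_i^N = \id_{B_i}$, so $\sigma_i$ is a permutation. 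Applying this to $h_i(a) := f_i(a, \ldots, a)$, I would define $\xi(\tuple f)_i := \sigma_i^{-1} \circ u_i \circ f_i|_{B_i^n}$. Idempotency holds by construction, preservation of each $R'$ is retained because $\sigma_i^{-1}$ preserves $R'$ as the inverse of a bijection on a finite relation, and $\xi$ commutes with minors because $\sigma_i$ depends on $\tuple f$ only through its diagonal $a \mapsto f_i(a, \ldots, a)$, which is invariant under passing to minors of $\tuple f$.

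Finally, $l \geq 1$ follows from $\Pol(\Gamma) \not\sim \clone T$: if every $|B_i| = 1$ then $\tuple u$ is a constant polymorphism, and sending the unique $n$-ary operation of $\clone T$ to $\tuple u \circ \tuple \pi^n_1$ together with mapping everything in $\Pol(\Gamma)$ uniformly to $\clone T$ produces mutual minion homomorphisms, contradicting the hypothesis. The main technical obstacle I anticipate is the minimality-based permutation argument and verifying that $\sigma_i$ depends on $\tuple f$ only through its diagonal, so that $\xi$ is indeed a minion homomorphism; the remaining verifications reduce to standard bookkeeping.
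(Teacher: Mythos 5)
Your proposal is correct and follows essentially the same route as the paper, which only sketches this argument (a unary polymorphism with minimal range, images of $\tuple{A}$ and $\Gamma$, added singleton unary relations, removal of one-element sorts) and defers the equivalence $\Pol(\Gamma) \sim \Pol(\Delta) = \Polid(\Delta)$ to \cite{wonderland}; your permutation argument via the maps $\sigma_i$ built from the diagonal of $\tuple{f}$ is exactly the standard way that deferred step is carried out. One cosmetic caution: you use ``idempotent'' for $\tuple{u}$ in the semigroup sense ($\tuple{u} \circ \tuple{u} = \tuple{u}$), whereas the paper reserves that word for $f(a,\dots,a)=a$ (under which the only idempotent unary operation is the identity), so a different term such as ``retraction'' would avoid a clash.
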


Here is a brief sketch: take a unary operation in $\Pol(\Gamma)$ with minimal range w.r.t. component-wise inclusion, define $\tuple{B}$ and $\Delta$ as images of $\tuple{A}$ and $\Gamma$ under this operation, add all singleton unary relations of every type, and remove singleton sorts. If no sort remains, then $\Pol(\Gamma) \sim \minion{T}$, otherwise $\Pol(\Gamma) \sim \Pol(\Delta) = \Polid(\Delta)$ by~\cite{wonderland}. 

Next we observe that idempotent minions contain all the projections and minion endomorphisms preserve them.

\begin{lemma} \label{lem:cores-projections}
    Let $\minion M$ and $\minion N$ be idempotent minions on $(\tuple{A},\tuple{A})$ and $(\tuple{B},\tuple{B})$, respectively. Let $\xi: \minion M \to \minion N$ be a minion homomorphism. Then $\minion M$ (resp. $\minion N$) contains all the projections on $\tuple{A}$ (resp. $\tuple{B}$) and for any $m \in \mathbb N$ and $i \in [m]$ we have
    $
    \xi(\tuple{\pi}^m_i) = \tuple{\pi}^m_i.
    $
\end{lemma}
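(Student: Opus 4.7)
The plan is to exploit the observation that in any idempotent function minion, there is exactly one unary operation, namely the unary identity projection, and then pull everything else out via minors.

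First I would show $\minion{M}$ contains every projection on $\tuple{A}$. Since $\minion{M}$ is nonempty, pick any $\tuple{f} \in \minion{M}^{(n)}$ and let $\alpha\colon [n]\to [1]$ be the (unique, constant) map. Then $\tuple{f}^{(\alpha)} = \tuple f\circ (\tuple{\pi}^1_1,\dots,\tuple{\pi}^1_1)$ lies in $\minion{M}^{(1)}$, and idempotency forces this to equal $\tuple{\pi}^1_1$; hence $\tuple{\pi}^1_1 \in \minion{M}$. For arbitrary $m \in \mathbb{N}$ and $i \in [m]$, taking $\beta\colon [1]\to [m]$ with $\beta(1)=i$ gives $(\tuple{\pi}^1_1)^{(\beta)} = \tuple{\pi}^m_i$, so $\tuple{\pi}^m_i \in \minion{M}$. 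The same argument applied to $\minion{N}$ shows $\tuple{\pi}^m_i \in \minion{N}$.

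Next I would argue that on the unary level $\xi$ has no choice. The element $\xi^{(1)}(\tuple{\pi}^1_1)$ lies in $\minion{N}^{(1)}$ and is idempotent (since $\minion{N}$ is idempotent); but any idempotent unary operation on $\tuple{B}$ is, component-wise, the identity, so $\xi^{(1)}(\tuple{\pi}^1_1)=\tuple{\pi}^1_1$ (the unary projection on $\tuple{B}$).

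Finally, for the general case, I would use that $\xi$ preserves minors together with the representation $\tuple{\pi}^m_i = \minion{M}^{(\beta)}(\tuple{\pi}^1_1)$ for $\beta\colon [1]\to [m]$, $\beta(1)=i$, obtaining
\[
\xi^{(m)}(\tuple{\pi}^m_i) = \xi^{(m)}\bigl(\minion{M}^{(\beta)}(\tuple{\pi}^1_1)\bigr) = \minion{N}^{(\beta)}\bigl(\xi^{(1)}(\tuple{\pi}^1_1)\bigr) = \minion{N}^{(\beta)}(\tuple{\pi}^1_1) = \tuple{\pi}^m_i.
\]
There is no real obstacle here; the only subtlety worth pointing out is that a unary operation $f$ satisfies the idempotency condition $f\circ \pi^1_1=\pi^1_1$ exactly when $f(a)=a$ for all $a$, so the unary identity is forced. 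Everything else is a direct application of the axioms of a minion homomorphism.
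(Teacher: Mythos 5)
Your proof is correct and follows essentially the same route as the paper's: both arguments hinge on the fact that the unique idempotent unary operation is $\tuple{\pi}^1_1$, obtain all projections as its minors, and then push $\tuple{\pi}^1_1$ through $\xi$ using minor preservation. Your version is just slightly more explicit about why $\tuple{\pi}^1_1$ lies in $\minion{M}$ in the first place (via a minor of an arbitrary element), which the paper leaves implicit.
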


\begin{proof}
 The only idempotent unary multisorted operation on $\tuple{A}$ is $\tuple{\pi}^1_1$. One of its minors is $\tuple{\pi}^1_1 \circ (\tuple{\pi}^m_i) = \tuple{\pi}^m_i$, so it is in $\minion M$ and we have
\[
\xi(\tuple{\pi}^m_i) = \xi(\tuple{\pi}^1_1 \circ (\tuple{\pi}^m_i)) = \xi(\tuple{\pi}^1_1) \circ (\tuple{\pi}^m_i) = \tuple{\pi}^1_1 \circ (\tuple{\pi}^m_i) = \tuple{\pi}^m_i.
\]
\end{proof}

If $\xi: \minion M \to \minion N$ is as in the last lemma, then $\xi$ also preserves identities of height at most 1. For instance if $\tuple{f} \in \minion{M}$ satisfies $f(x,x,y) \approx y$, then the corresponding identity $g(x,x,y) \approx y$ is satisfied by $\tuple{g} = \xi(\tuple f)$.

\subsection{Minion core} \label{subsec:minioncore}

A core of a structure is a homomorphically equivalent structure whose every endomorphism is an automorphism. For abstract minions, this concept takes the following form. 

\begin{definition}
    A minion $\minion{N}$ is called a \emph{minion core} if every minion homomorphism from $\minion{N}$ to itself is a minion automorphism.

    For a minion $\minion{M}$, a \emph{minion core} of $\minion{M}$ is a minion core $\minion{N}$ such that $\minion{M} \sim \minion{N}$. 
\end{definition}

The following simple result shows that every ``finite'' minion has an essentially unique minion core, therefore we will talk about \emph{the} minion core of $\minion{M}$.

\begin{theorem}
    Every minion $\minion{M}$ with all the sets $\minion{M}^{(n)}$ finite has a minion core, which is unique up to minion isomorphisms. 
\end{theorem}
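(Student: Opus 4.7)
The plan is to construct a minion core as a minimal retract inside $\minion{M}$, and then deduce uniqueness from the standard composition-of-homomorphisms trick. The main obstacle is that, since $\minion{M}$ has infinitely many arities, no single power of a given endomorphism need be idempotent at every arity simultaneously; a double compactness argument bridges this gap, both for bounding chains of retracts and for producing idempotent endomorphisms, and is where the finiteness of each $\minion{M}^{(n)}$ is essential.

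For existence, let $\mathcal{R}$ be the collection of sub-minions $\minion{N} \subseteq \minion{M}$ (families $\minion{N}^{(n)} \subseteq \minion{M}^{(n)}$ closed under every minor map $\minion{M}^{(\alpha)}$) that are \emph{retracts}, meaning there is a minion homomorphism $\rho: \minion{M} \to \minion{N}$ restricting to the identity on $\minion{N}$. I would apply Zorn's lemma to $(\mathcal{R},\subseteq)$. For a descending chain $\minion{N}_1 \supseteq \minion{N}_2 \supseteq \cdots$ with retractions $\rho_i$, the intersection $\minion{K} = \bigcap_i \minion{N}_i$ is a sub-minion whose parts $\minion{K}^{(n)}$ each equal $\minion{N}_i^{(n)}$ for large $i$ by finiteness of $\minion{M}^{(n)}$. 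To show $\minion{K}$ is a retract, I work in $X = \prod_{n \in \mathbb{N}} (\minion{M}^{(n)})^{\minion{M}^{(n)}}$, a compact metrizable space as a countable product of finite discrete spaces; the conditions ``preserves minors'', ``maps into $\minion{K}$'', and ``fixes $\minion{K}$ pointwise'' each depend on only finitely many coordinates at a time and thus define closed subsets of $X$, so any convergent subsequence of $(\rho_i)$ in $X$ yields a retraction $\rho_\infty: \minion{M} \to \minion{K}$.

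Let $\minion{N}$ be a resulting minimal element of $\mathcal{R}$ with retraction $\rho$. I claim $\minion{N}$ is a core. Otherwise there is a non-automorphism $\zeta: \minion{N} \to \minion{N}$, so $\zeta^{(n_0)}$ fails to be a bijection of the finite set $\minion{N}^{(n_0)}$ for some $n_0$. For each $n$, the iterates of the endofunction $\zeta^{(n)}$ are eventually periodic with period $q_n$ after a pre-period $p_n$. By a diagonal construction one selects $m_1 < m_2 < \cdots$ such that, for every $n$, $m_i$ is eventually a multiple of $q_n$ exceeding $p_n$; then $(\zeta^{m_i})^{(n)}$ is eventually a fixed idempotent endofunction of $\minion{N}^{(n)}$. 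The corresponding pointwise limit in $X$ is an idempotent minion endomorphism $\epsilon$ of $\minion{N}$ with $\epsilon[\minion{N}] \subseteq \zeta[\minion{N}]$, and $\epsilon \circ \rho: \minion{M} \to \epsilon[\minion{N}]$ is then a retraction witnessing $\epsilon[\minion{N}] \in \mathcal{R}$. Since $\epsilon[\minion{N}]^{(n_0)} \subseteq \zeta[\minion{N}]^{(n_0)} \subsetneq \minion{N}^{(n_0)}$, this strictly smaller retract contradicts minimality of $\minion{N}$.

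For uniqueness, given two cores $\minion{N}_1 \sim \minion{N}_2$ with homomorphisms $\xi: \minion{N}_1 \to \minion{N}_2$ and $\eta: \minion{N}_2 \to \minion{N}_1$, both compositions $\eta \circ \xi$ and $\xi \circ \eta$ are endomorphisms of cores, hence automorphisms. This forces every $\xi^{(n)}$ and $\eta^{(n)}$ to be a bijection, and a routine check shows that the componentwise inverse of such a bijective minion homomorphism is automatically a minion homomorphism, so $\xi$ is a minion isomorphism.
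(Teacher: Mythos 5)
Your proof is correct, but it takes a genuinely different route from the paper's. The paper constructs the core ``from above'': it inductively selects endomorphisms $\xi_i$ of $\minion{M}$ whose image tuples $(\xi_i^{(1)}(\minion{M}^{(1)}),\dots,\xi_i^{(i)}(\minion{M}^{(i)}))$ are componentwise minimal, thins out the sequence so that each finite-arity component stabilizes (the same finiteness-driven diagonalization that you package as compactness of a countable product of finite discrete spaces), and takes the image of the diagonal map as the core; the componentwise minimality of the images then directly rules out a non-injective endomorphism, with no need for Zorn's lemma or for idempotents. You instead organize everything around retracts: Zorn's lemma yields a minimal retract, and showing that a minimal retract is a core requires the additional idea of extracting an idempotent endomorphism $\epsilon$ as a limit of powers $\zeta^{m_i}$, so that $\epsilon[\minion{N}]$ is again a retract and is strictly smaller at arity $n_0$. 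This is the classical ``core as image of an idempotent endomorphism'' argument for finite structures, correctly adapted to the graded setting, and it buys a slightly stronger conclusion (the core is exhibited as a genuine retract of $\minion{M}$, not just a homomorphic image). Two small points to tidy up: Zorn's lemma needs lower bounds for \emph{arbitrary} chains, whereas you only treat countable descending sequences; since each $\minion{M}^{(n)}$ is finite, every chain of retracts admits a countable coinitial subchain with the same intersection, so your sequential argument does suffice, but this reduction should be said. Also, the limit of the powers $\zeta^{m_i}$ lives in $\prod_n(\minion{N}^{(n)})^{\minion{N}^{(n)}}$ rather than in $X$. Your uniqueness argument is essentially identical to the paper's.
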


\begin{proof} 
  We start the proof of existence by inductively selecting minion homomorphisms $\xi_1$, $\xi_2$, \dots from $\minion{M}$ to $\minion{M}$  so that for every $i \in \mathbb{N}$, 
  \begin{itemize}
      \item $\xi_i^{(j)}(\minion{M}^{(j)}) \subseteq \xi_{i-1}^{(j)}(\minion{M}^{(j)})$ for every $j\in \mathbb{N}$ such that  $j \leq i-1$, and
      \item $(\xi_i^{(1)}(\minion{M}^{(1)}), \xi_i^{(2)}(\minion{M}^{(2)}), \dots, \xi_i^{(i)}(\minion{M}^{(i)}))$ is minimal with respect to component-wise inclusion.
  \end{itemize}
Since there exists a homomorphism from $\minion{M} \to \minion{M}$ (the identity), such a sequence exists.

Note that $\xi_i^{(j)}(\minion{M}^{(j)}) = \xi_{i'}^{(j)}(\minion{M}^{(j)})$ whenever $j \leq i' \leq i$ by minimality. We thin out  the sequence so that $\xi_1^{(1)} = \xi_2^{(1)} = \dots$, which is possible since there are only finitely many functions $\minion{M}^{(1)} \to \minion{M}^{(1)}$. Also note that thinning out preserves the two properties above. Next we thin out the sequence again so that $\xi_2^{(2)} = \xi_3^{(2)} = \dots$, and so on, until we achieve $\xi_i^{(j)} = \xi_{i'}^{(j)}$ whenever $j \leq i' \leq i$. 

We define $\xi = (\xi^{(i)})_i$ by $\xi^{(i)} = \xi_i^{(i)}$. It preserves minors as for any $m,n$, the $m$th and $n$th components of $\xi$ coincide with the corresponding components of $\xi_{\max \{m , n \}}$. We define $\minion{N}$ as the image of $\minion{M}$ under $\xi$, that is, $\minion{N}^{(n)} = \xi(\minion{M}^{(n)})$ and $\minion{N}^{(\alpha)}$ is a restriction of $\minion{M}^{(\alpha)}$  (minor preservation ensures that this makes sense). We have $\minion{M} \to \minion{N}$ as witnessed by $\xi$ and $\minion{N} \leq \minion{M}$ witnessed by the inclusion. Finally, observe that $\minion{N}$ is a minion core. Indeed, if some homomorphism $\nu: \minion{N} \to \minion{N}$ was not an automorphism, then $\nu^{(i)}$ would not be one-to-one for some $i$. But then $(\xi\nu\xi)^{(i)}(\minion{M}^{(i)}) = (\xi_i\nu\xi_i)^{(i)}(\minion{M}^{(i)})$ would be a proper subset of $\xi_i^{(i)}(\minion{M}^{(i)})$, contradicting the minimality.

 In order to show uniqueness, consider two minion cores $\minion{N}$ and $\minion{N}'$ of $\minion{M}$. By composing homomorphisms witnessing $\minion{N} \sim \minion{M} \sim \minion{N}'$ we get homomorphisms $\xi: \minion{N} \to \minion{N}'$ and $\nu: \minion{N}' \to \minion{N}$. We show that $\xi$ is invertible. Since $\minion{N}$ is a minion core, its endomorphism $\nu\xi$ is invertible and similarly $\xi\nu: \minion{N}' \to \minion{N}'$ is invertible as well. Therefore $\xi_1 = (\nu\xi)^{-1}\nu$ is a left inverse of $\xi$ and $\xi_2 = \nu(\xi\nu)^{-1}$ is a right inverse of $\xi$. They are necessarily equal as $\xi_1 = \xi_1\xi\xi_2 = \xi_2$, and we are done. 
\end{proof}

The significance of minion cores is thus in that they provide a canonical representative of each $\sim$-equivalence class (for ``finite'' minions). 

Even if one is primarily interested in comparing function clones (rather than minions) using minion homomorphisms, minions naturally appear since the minion core of a function clone constructed in the above proof is a function minion, but not necessarily a function clone. One such an example is discussed in \Cref{subsec:collapse}. 

The following consequence of \Cref{lem:binaries} will be useful for proving that a Boolean multisorted minion is a core, since it implies that we only need to care about binary parts of minion endomorphisms. 

\begin{lemma} \label{lem:cores-binaries}
Let $\minion M$ be a function minion on a pair of multisorted sets $(\tuple{A},\tuple{B})$. Let $m \in \mathbb{N}$ be greater than or equal to the size of each sort $A_i$. If for every minion homomorphism $\xi: \minion{M} \to \minion{M}$ its $m$-ary part $\xi^{(m)}: \minion{M}^{(m)} \to \minion{M}^{(m)}$ is a bijection, then $\minion{M}$ is a minion core.
\end{lemma}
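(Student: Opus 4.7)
The plan is to reduce the claim to \Cref{lem:binaries} by iterating $\xi$: if $(\xi^{(m)})^k = \id$ for some $k$, then \Cref{lem:binaries} applied to $\xi^k$ and $\id_{\minion{M}}$ (which by construction agree on the $m$-ary part) forces $\xi^k = \id_{\minion{M}}$, so $\xi^{k-1}$ becomes a two-sided inverse of $\xi$ and $\xi$ is a minion automorphism.

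Executing this plan, I would first fix an arbitrary minion endomorphism $\xi: \minion{M} \to \minion{M}$ and verify that $\minion{M}^{(m)}$ is finite. Each element of $\minion{M}^{(m)}$ is a $k$-tuple whose $i$-th coordinate is a function $A_i^m \to B_i$; since $|A_i| \leq m$ by hypothesis and the codomain sorts $B_i$ are finite in the applications of interest (Boolean multisorted minions, where every $B_i = \{0,1\}$), $\minion{M}^{(m)}$ embeds into a finite set of function tuples and is therefore finite. By hypothesis, $\xi^{(m)}$ is a bijection of this finite set, hence a permutation of finite order, so I can choose $k \geq 1$ with $(\xi^{(m)})^k = \id_{\minion{M}^{(m)}}$. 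The iterate $\xi^k = \xi \circ \cdots \circ \xi$ is then an endomorphism of $\minion{M}$ whose $m$-ary part equals $(\xi^{(m)})^k = (\id_{\minion{M}})^{(m)}$, so applying \Cref{lem:binaries} (with $\minion{N} = \minion{M}$) to the two endomorphisms $\xi^k$ and $\id_{\minion{M}}$ yields $\xi^k = \id_{\minion{M}}$. Consequently $\xi$ has two-sided inverse $\xi^{k-1}$ and every component $\xi^{(n)}$ is a bijection, so $\xi$ is a minion automorphism. Since $\xi$ was arbitrary, $\minion{M}$ is a minion core.

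I do not expect a genuine obstacle: the finiteness of $\minion{M}^{(m)}$ is automatic in the paper's setting, and the rest is a one-line application of \Cref{lem:binaries} to the iterate $\xi^k$. The only real conceptual step is noticing that \Cref{lem:binaries} allows one to upgrade a statement about the $m$-ary part of $\xi$ to a statement about all of $\xi$ by iteration, which is how the bijectivity of $\xi^{(m)}$ propagates to every other arity.
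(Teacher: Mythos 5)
Your proof is correct and takes essentially the same route as the paper: iterate $\xi$ to obtain $(\xi^{(m)})^k=\id$, then invoke \Cref{lem:binaries} to conclude $\xi^k=\id_{\minion M}$ and hence that $\xi$ is invertible. You have additionally flagged, correctly, that the finiteness of $\minion{M}^{(m)}$ (and thus the existence of such a $k$) tacitly requires the codomain sorts $B_i$ to be finite — a hypothesis the lemma's statement does not spell out but which the paper's own proof silently assumes as well.
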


\begin{proof}
   Consider a minion homomorphism $\xi: \minion{M} \to \minion{M}$. 
   By the assumption, the mapping $\xi^{(m)}$ is a bijection, therefore there exists $n$ such that 
    $$
    (\xi^{(m)})^n = \underbrace{\xi^{(m)} \circ \xi^{(m)} \circ \dots \circ \xi^{(m)}}_{\text{$n$ times}} = \mathrm{id}_{\minion M^{(m)}}
    $$
    (one can take e.g. $n=|\minion M^{(m)}|!$). It then follows from \Cref{lem:binaries} that  $\mu = \xi^{n-1}$  is a both-sided inverse to $\xi$. Indeed, we have $(\mu \circ \xi)^{(m)} = \mu^{(m)} \circ \xi^{(m)} = (\xi^{(m)})^{n-1} \circ \xi^{(m)} =  (\xi^{(m)})^n = \mathrm{id}_{{\minion M}^{(m)}}$, so $\mu \circ \xi = \mathrm{id}_{\minion M}$, and similarly $(\xi \circ \mu)^{(m)}  = \mathrm{id}_{{\minion M}^{(m)}}$, so $\xi \circ \mu = \mathrm{id}_{\minion M}$.
\end{proof}

\section{Boolean multisorted clones} \label{sec:classif}

In this section we work on the $k$-sorted Boolean set 
$$
\tuple{A} = (A_1, A_2, \dots, A_k), \quad A_1 = A_2 = \dots = A_k = \{0,1\},
$$
where $k$ is a positive integer. The operations on $\tuple{A}$ are called \emph{$k$-sorted Boolean operations}, typically denoted by $\tuple{f}$, $\tuple{g}$, or $\tuple{h}$. They are $k$-tuples of Boolean operations $\{0,1\}^n \to \{0,1\}$. Recall that clones on $\tuple{A}$ are called $k$-sorted Boolean clones. 

The goal of this section is to describe the ordering by minion homomorphisms in the class of all multisorted Boolean clones of the form $\Pol(\Gamma)$, where $\Gamma$ is a set of at most binary relations. This will be done by computing all possible minion cores of these clones  and the ordering between them.

\Cref{thm:rel-cores} allows us to concentrate on idempotent clones. We use the notation $\idemp_k$ instead of $\idemp(\tuple{A})$, or simply $\idemp$ if $k$ is clear from the context.
$$
\idemp_k = \mbox{ all idempotent $k$-sorted Boolean operations}
$$

We start the project by looking at a concrete example.

\begin{example} \label{ex:description-informal}
    Let $k=4$ and $\Gamma = \{R_1, R_2, R_3\}$, where
    \begin{align*}
        R_1 &= \{(0,1),(1,0)\}  \text{ of type  $(1,2)$} \\
        R_2 &= \{(0,0),(0,1),(1,1)\} \text{ of type $(2,3)$} \\ 
        R_3 &= \{(0,0),(0,1),(1,0)\}  \text{ of type $(3,4)$}. 
    \end{align*}
    
    What quadruples $\tuple{h}=(h_1,h_2,h_3,h_4)$ are in $\Polid(\Gamma)$? 
    First, $\tuple{h}$ preserves $R_1$ if, and only if, $h_1^d = h_2$, where $h_1^d$ is the dual of $h_1$ (\Cref{def:dual}). 
    Second, $\tuple{h}$ preserves $R_2$, which is the inequality relation $\leq$, if, and only if, $h_2(\tuple{a}) \leq h_3(\tuple{b})$ whenever $\tuple{a} \leq \tuple{b}$ (component-wise); this relation is denoted $h_2 \triangleleft h_3$  (\Cref{def:triangle}).
    Third, $\tuple{h}$ preserves $R_3$ if, and only if, $h_3 \triangleleft h_4^d$.
    Altogether, 
    $$
    \Polid(\Gamma) = \{\tuple{h} \in \idemp_4 \mid h_1^d = h_2 \triangleleft h_3 \triangleleft h_4^d\}.
    $$
\end{example}

The notation for Boolean operations is introduced in \Cref{subsec:boolean} together with some properties. The description similar to the example is formally defined and then simplified in \Cref{subsec:description}. 
In \Cref{subsec:collapse}, we introduce the minions appearing in \Cref{fig:order0} and prove that every multisorted Boolean clone that we consider is equivalent to one of them. \Cref{subsec:homo} shows that all of these minions are minion cores and establishes some (non-)inequalities between them. \Cref{subsec:summary} then finishes the job.

\subsection{Boolean operations} \label{subsec:boolean}

We denote by $\leq$ the natural ordering of $\{0,1\}$, i.e., $0 \leq 1$. This ordering is extended to tuples and Boolean operations: for $\tuple{a}, \tuple{b} \in \{0,1\}^n$, and $f,g: \{0,1\}^n \to \{0,1\}$ we define
\begin{itemize}
    \item $\tuple{a} \leq \tuple{b}$ if $a_i \leq b_i$ for every $i \in [n]$, and
    \item $f \leq g$ if $f(\tuple{a}) \leq g(\tuple{a})$ for every $\tuple{a} \in \{0,1\}^n$.
\end{itemize}
We denote by $\wedge$ and $\vee$ the binary minimum and maximum operation on $\{0,1\}$, respectively, and extend it to  Boolean operations. 
\begin{itemize}
    \item $a \wedge b = \min \{a,b\}$, $a \vee b = \max \{a,b\}$
    \item $(f \wedge g)(\tuple{a}) = f(\tuple{a}) \wedge g(\tuple{a})$, $(f \vee g)(\tuple{a}) = f(\tuple{a}) \vee g(\tuple{a})$ for every $\tuple{a} \in \{0,1\}^n$
\end{itemize}
Notice that $\vee$ and $\wedge$ are the only idempotent binary Boolean operations apart of the two projections.

We denote by $\overline{a}$ the ``negation'' of $a$ and extend the notation to tuples:
\begin{itemize}
    \item $\overline{a} = 1-a$,
    \item $\overline{(a_1, a_2, \dots, a_n)} = (\overline{a_1},\overline{a_2}, \dots, \overline{a_n})$.
\end{itemize}
Finally, we introduce the relations between Boolean operations appearing in \Cref{ex:description-informal}.

\begin{definition} \label{def:triangle}
Let $f$ and $g$ be two $n$-ary Boolean operations. We write $f \triangleleft g$ if $f(\tuple a) \leq g(\tuple b)$ for all $n$-tuples $\tuple a, \tuple b \in \{0,1\}^n$ such that $\tuple a \leq \tuple b$.
\end{definition}

\noindent
Note that $f \triangleleft g$ implies $f \leq g$ and that the converse is not true in general. For instance, while $f \leq f$ holds for every Boolean operation, the relation $f \triangleleft f$ is nontrivial -- it is equivalent to $f$ being monotone. 
An alternative definition of $f \triangleleft g$ is that there exists a monotone $g'$ such that $f \leq g' \leq g$.

\begin{definition} \label{def:dual}
Let $f$ be an $n$-ary Boolean operation. The \emph{dual} of $f$, denoted by $f^d$, is defined as follows.
$$
f^d (\tuple a) = \overline{f(\overline{\tuple a})} \quad \text{for every $\tuple a \in \{0,1\}^n$}
$$
\end{definition}

Notice that $(f^d)^d=f$, that $(f \vee g)^d = f^d \wedge g^d$, and that $(f \wedge g)^d = f^d \vee g^d$. 
We now list several other properties. They are all easy to verify and we omit the proof.
\begin{description}
    \item[Skew symmetry] $f \triangleleft g^d$  iff $g \triangleleft f^d$.
    \item[Strong transitivity] If $f \leq g \triangleleft h$ or $f \triangleleft g \leq h$, then $f \triangleleft h$.
    \item[Compatibility] If $f \triangleleft g$ and $f' \triangleleft g'$, then $f \wedge f' \triangleleft g \wedge g'$ and $f \vee f' \triangleleft g \vee g'$.
\end{description}

The following lemma will enable us to
project a self-dual operation $g$ to an interval $f \leq f^d$.

\begin{lemma}\label{lem:h}
Let $f$ and $g$ be operations such that $g = g^d$ and $
f \leq f^d$. Define $h = (g \vee f) \wedge f^d$. Then $f \leq h = h^d$.
\end{lemma}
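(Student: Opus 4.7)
The plan is to verify the two assertions separately: the inequality $f \le h$ and the self-duality $h = h^d$.

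For $f \le h$, I would observe that $f \le g \vee f$ is immediate and $f \le f^d$ holds by hypothesis. Hence $f$ is below both factors of the meet defining $h$, which gives $f \le (g \vee f) \wedge f^d = h$.

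For $h = h^d$, I would first compute $h^d$ using the properties listed just before the lemma: $(A \vee B)^d = A^d \wedge B^d$, $(A \wedge B)^d = A^d \vee B^d$, and $(A^d)^d = A$. This gives
\[
h^d = \bigl((g \vee f) \wedge f^d\bigr)^d = (g \vee f)^d \vee (f^d)^d = (g^d \wedge f^d) \vee f = (g \wedge f^d) \vee f,
\]
where the last equality uses $g = g^d$. It then remains to verify that
\[
(g \vee f) \wedge f^d = (g \wedge f^d) \vee f.
\]
Since $\wedge$ and $\vee$ act pointwise and $\{0,1\}$ is a distributive lattice, we may distribute on the left-hand side to obtain $(g \wedge f^d) \vee (f \wedge f^d)$. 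The hypothesis $f \le f^d$ yields $f \wedge f^d = f$ pointwise, so the left-hand side reduces to $(g \wedge f^d) \vee f$, matching the right-hand side. Combining, $h = h^d$.

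There is no real obstacle here; the only point that deserves a brief remark is the pointwise reduction to a two-element distributive lattice, which justifies both distributivity and the identity $f \wedge f^d = f$ used to collapse the expression. The duality identities for $\wedge$ and $\vee$ listed right after \Cref{def:dual} are the only nontrivial tool invoked.
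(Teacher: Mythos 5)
Your proof is correct and follows essentially the same route as the paper: both compute $h^d$ via the duality identities and $g = g^d$, then use pointwise distributivity together with $f \le f^d$ (you absorb $f \wedge f^d = f$, the paper absorbs $f^d \vee f = f^d$) to conclude $h = h^d$, with $f \le h$ then being immediate. No gaps.
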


\begin{proof}
We have $h^d = ((g \vee f) \wedge f^d)^d = (g \vee f)^d \vee f = (g^d \wedge f^d) \vee f = (g^d \vee f) \wedge (f^d \vee f) = (g \vee f) \wedge f^d = h$. 
Also $h = h^d = (g \wedge f^d) \vee f \geq f$. 
\end{proof}

\subsection{Description} \label{subsec:description}

In this subsection we show that the clones of our interest can be described by means of $\triangleleft$, $=$, and duals (\Cref{lem:description_exists}) and then we simplify the description in \Cref{thm:reduced-description}. 

\begin{definition}[Description]
A description over a sequence of symbols $(\dd{h}_1, \dd{h}_2,$ $ \dots, \dd{h}_k)$, $k \in \mathbb{N}$ is a set of formal expressions, called \emph{constraints}, that can take one of the following forms
$$
  \dd{h}_i \triangleleft \dd{h}_j, \ 
  \dd{h}_i \triangleleft \dd{h}_j^d, \ 
  \dd{h}_i^d \triangleleft \dd{h}_j, \ 
  \dd{h}_i^d \triangleleft \dd{h}_j^d, \ 
  \dd{h}_i = \dd{h}_j, \
  \dd{h}_i = \dd{h}_j^d, \
  \dd{h}_i^d = \dd{h}_j, \
  \dd{h}_i^d = \dd{h}_j^d,
$$
where $i,j \in [k]$ (not necessarily distinct). 

If $\descript$ is a description over $(\dd{h}_1, \dots, \dd{h}_k)$,
then we define $\Clo(\descript)$ as the set of all idempotent $k$-sorted Boolean operations $(h_1, \dots, h_k) \in \idemp_k$ which satisfy all the constraints in $\descript$ in the obvious sense.
We also say that $\descript$ \emph{describes} $\Clo(\descript)$. 
\end{definition}

\begin{example} 
  The clone from \Cref{ex:description-informal} is
  $$
    \Clo(\dd{h}_1^d = \dd{h}_2,\dd{h}_2 \triangleleft \dd{h}_3,\dd{h}_3 \triangleleft \dd{h}_4^d) = \{\tuple{h} \in \idemp_4 \mid h_1^d = h_2 \triangleleft h_3 \triangleleft h_4^d\}.
  $$  
  We will also use a shorter notation such as, e.g.,
$$
\Clo(\dd{h}_1^d = \dd{h}_2 \triangleleft \dd{h}_3 \triangleleft \dd{h}_4^d)
$$  
\end{example}

\begin{lemma} \label{lem:description_exists}
For each set $\Gamma$ of at most binary relations on $\tuple{A}$, the clone $\Pol(\Gamma)$ is equivalent to $\clone{T}$ or to $\Clo(\descript)$ for some description $\descript$.
\end{lemma}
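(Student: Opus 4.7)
The plan is to first reduce to the idempotent case via \Cref{thm:rel-cores}, and then translate each at most binary relation into a constraint of the required form by a short case analysis. If $\Pol(\Gamma) \sim \clone{T}$ we are done, so assume otherwise. Applying \Cref{thm:rel-cores} with $m = 2$ yields an $l$-sorted Boolean set $\tuple{B}$ (with every sort of cardinality exactly two, since the bound on sizes is two) and a set $\Delta$ of at most binary relations on $\tuple{B}$ with $\Pol(\Gamma) \sim \Pol(\Delta) = \Polid(\Delta)$. It therefore suffices to exhibit a description $\descript$ over $(\dd{h_1}, \dots, \dd{h_l})$ such that $\Polid(\Delta) = \Clo(\descript)$.

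The next step is a case analysis of the possible relations. Every unary relation on a two-element sort, namely $\emptyset$, $\{0\}$, $\{1\}$, or $\{0,1\}$, is preserved by every idempotent tuple and contributes no constraint. For a binary relation of type $(i,j)$ on $\{0,1\}$ there are sixteen possibilities. Ten of them are automatically preserved by any $\tuple{h} \in \idemp_l$: the empty relation, the full relation $\{0,1\}^2$, the four singletons, and the four two-element subsets whose first or second projection is a singleton (in each such case some column of an input matrix is constant, so idempotence forces the output to land in the relation). The remaining six produce exactly six nontrivial constraints, each of one of the eight syntactic forms listed in the definition of a description: $\{(0,0),(1,1)\}$ gives $\dd{h_i = h_j}$; $\{(0,1),(1,0)\}$ gives $\dd{h_i^d = h_j}$, directly from \Cref{def:dual}; $\{(0,0),(0,1),(1,1)\}$, i.e.\ the relation $\leq$, gives $\dd{h_i \triangleleft h_j}$ by \Cref{def:triangle}; its transpose $\{(0,0),(1,0),(1,1)\}$ gives $\dd{h_j \triangleleft h_i}$; the NAND relation $\{(0,0),(0,1),(1,0)\}$ gives $\dd{h_j \triangleleft h_i^d}$ after rewriting $a_l \wedge b_l = 0$ as $b_l \leq \overline{a_l}$ and using the identity $\overline{h_i(\tuple{a})} = h_i^d(\overline{\tuple{a}})$; and the NOR relation $\{(0,1),(1,0),(1,1)\}$ gives $\dd{h_i^d \triangleleft h_j}$ by the analogous substitution.

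Taking $\descript$ to be the collection of all constraints obtained in this way from the relations in $\Delta$ yields $\Polid(\Delta) = \Clo(\descript)$ and completes the proof. There is no genuine obstacle here: the argument is a reduction followed by a finite case check. The only step requiring a moment of care is the translation of the NAND and NOR relations, where the substitutions $\tuple{c} = \overline{\tuple{a}}$ or $\tuple{c} = \overline{\tuple{b}}$ together with the definition of the dual are needed to recognise the preservation condition as a single $\triangleleft$-constraint between an operation and a dual.
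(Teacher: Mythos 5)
Your proof is correct and follows essentially the same route as the paper: reduce to the idempotent case via \Cref{thm:rel-cores}, then translate each at most binary relation into a constraint by a finite case check, observing that the relations not yielding constraints are preserved by all idempotent operations. The individual translations you give differ from the paper's only by skew symmetry or by $(f^d)^d=f$ (e.g.\ $\dd{h_j \triangleleft h_i^d}$ versus the paper's $\dd{h_i \triangleleft h_j^d}$ for the NAND relation), and all are among the permitted constraint forms, so this is the same proof.
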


\begin{proof}
By \Cref{thm:rel-cores}, it is enough to show that $\Polid(\Gamma)$ is equal to $\Clo(\descript)$ for some $\descript$. 
Indeed, unless $\Pol(\Gamma) \sim \clone{T}$, the theorem gives us a multisorted set $\tuple{B}$, which is $l$-sorted Boolean by the second item, and a set of at most binary relations $\Delta$ on $\tuple B$ such that $\Pol(\Gamma) \sim \Polid(\Delta)$. The proof (applied to $\Delta$ instead of $\Gamma$) will give us $\Polid(\Delta) = \Clo(\descript)$ for some $\descript$, so we obtain $\Pol(\Gamma) \sim \Clo(\descript)$.

Let $k$ be the number of sorts of $\tuple{A}$.
We construct $\descript$ over $(\dd{h}_1, \dd{h}_2,$ $ \dots, \dd{h}_k)$ by including the constraint  $\dd{h}_i \triangleleft \dd{h}_j^d$ for each relation $\{0,1\}^2\setminus\{(1,1)\}$ of type $(i,j)$ in $\Gamma$,
 $\dd{h}_i^d \triangleleft \dd{h}_j$ for $\{0,1\}^2\setminus\{(0,0)\}$, $\dd{h}_i \triangleleft \dd{h}_j$ for $\{0,1\}^2\setminus\{(1,0)\}$, $\dd{h}_i^d \triangleleft \dd{h}_j^d$ for $\{0,1\}^2\setminus\{(0,1)\}$, $\dd{h}_i = \dd{h}_j$ for $\{ (0,0), (1,1) \}$, and $\dd{h}_i = \dd{h}_j^d$ for $\{ (0,1), (1,0) \}$, ignoring any relation not mentioned.

The inclusion $\Polid(\Gamma) \subseteq \Clo(\descript)$ follows from the definitions. For the other direction, it is enough to additionally observe that we only ignored unary relations $\emptyset$, $\{a\}$ and binary relations $\emptyset$, $\{0,1\}^2$, $\{(a,b),(a,c)\}$, $\{(b,a),(c,a)\}$ ($a,b,c \in \{0,1\}$), which are all preserved by any idempotent operation.
\end{proof}

Now we move on to the task of making the description simpler. In particular, we eliminate expressions of the form $\dd{h}_i^d \triangleleft \dd{h}_j$ from $\descript$ and  make $=$ appear only in a restricted way.

\begin{definition}[Reduced form]
A description in \emph{reduced form} is a description over $(\dd{f}_1, \dots, \dd{f}_n, \dd{g}_1, \dots, \dd{g}_m)$, where $m$ and $n$ are nonnegative integers with $m+n \geq 1$, such that
\begin{enumerate}[label=(\roman*)]
\item \label{itm:i}
$\dd{g}_i = \dd{g}_i^d$ is in $\descript$  for each $i \in [m]$,
\item all the remaining constraints in $\descript$ are of the form $\dd{f}_i \triangleleft \dd{f}_j$, $\dd{f}_i \triangleleft \dd{f}_j^d$, $\dd{f}_i \triangleleft \dd{g}_j$, or
$\dd{g}_i \triangleleft \dd{g}_i$, and
\item \label{itm:ii} there are no $\triangleleft$-cycles of length more than 1, that is, there are no chains $\dd{f}_{i_1} \triangleleft \dd{f}_{i_2} \triangleleft \dots \triangleleft \dd{f}_{i_l} \triangleleft \dd{f}_{i_1}$ in $\descript$ with $l > 1$ and $i_1, \dots, i_l$ pairwise distinct. (Here, again, the membership in $\descript$ means that $\dd{f}_{i_1} \triangleleft \dd{f}_{i_2}$ is in $\descript$, $\dd{f}_{i_2} \triangleleft \dd{f}_{i_3}$  is in $\descript$, etc.)
\end{enumerate}
\end{definition}

We aim to show that each $\Clo(\descript)$ is equivalent to $\Clo(\descript')$, where the description $\descript'$ is reduced.

Minion homomorphisms will be defined by formulas such as
$$
\xi(h_1, h_2, h_3) = (h_3, h_2^d, h_1, h_1)
$$
and later by more complex formulas, e.g.,
$$
\xi(h_1, h_2, h_3) = (h_2^d, (h_3 \wedge h_2) \vee h_3^d).
$$
We show that each such a formula defines a minor preserving map.

\begin{lemma} \label{lem:preserving-minors}
   Let $k,l \in \mathbb{N}$. 
    Let $\dd{t}_1$, $\dd{t}_2$, \dots, $\dd{t}_{l}$ be terms over the set of symbols $\{\dd{h}_1, \dots, \dd{h}_k\}$ in the signature $\{\wedge,\vee,\ ^{d}\}$. For a $k$-sorted Boolean $n$-ary operation $(h_1, \dots, h_k)$ and $j \in [l]$ we define $\dd{t}_j(h_1, \dots, h_k): \{0,1\}^n \to \{0,1\}$ in the natural way (replace $\dd{h}_i$ by $h_i$ and compute the expression). Then
    the collection $\xi = (\xi^{(n)}: \idemp_k^{(n)} \to \idemp_{l}^{(n)})_{n}$ defined by
    $$
    \xi(h_1, \dots, h_k) = (\dd{t}_1(h_1, \dots, h_k), \dd{t}_2(h_1, \dots, h_k), \dots,
    \dd{t}_{l}(h_1, \dots, h_k))
    $$
    is a minion homomorphism $\idemp_k \to \idemp_l$.
\end{lemma}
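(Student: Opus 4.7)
The plan is to reduce everything to three elementary facts about the term constructors $\wedge$, $\vee$, and $^{d}$, and then lift them to arbitrary terms by structural induction. There are two things to check: that $\xi$ actually lands in $\idemp_{l}$, and that $\xi$ commutes with taking minors in the sense of \Cref{def:minionhomo}.

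First I would verify that each of $\wedge$, $\vee$, $^{d}$ preserves idempotency: for any $a \in \{0,1\}$ we have $(f \wedge g)(a,\dots,a) = a \wedge a = a$, $(f \vee g)(a,\dots,a) = a$, and $f^{d}(a,\dots,a) = 1 - f(\overline{a},\dots,\overline{a}) = 1 - \overline{a} = a$ whenever $f, g$ are idempotent. A straightforward induction on term complexity then shows that $\dd{t}_j(h_1,\dots,h_k)$ is idempotent for every $j$, so that $\xi(h_1,\dots,h_k) \in \idemp_{l}^{(n)}$.

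Next I would check the three corresponding minor-commutation identities: for any $\alpha : [n] \to [m]$ and any $n$-ary Boolean operations $f, g$,
\[
(f \wedge g)^{(\alpha)} = f^{(\alpha)} \wedge g^{(\alpha)}, \quad
(f \vee g)^{(\alpha)} = f^{(\alpha)} \vee g^{(\alpha)}, \quad
(f^{d})^{(\alpha)} = (f^{(\alpha)})^{d}.
\]
The first two are immediate, since $\wedge$ and $\vee$ act componentwise on values. For the third, evaluating at $\tuple{x} \in \{0,1\}^{m}$ gives
\[
(f^{d})^{(\alpha)}(\tuple{x}) = f^{d}(x_{\alpha(1)}, \dots, x_{\alpha(n)}) = 1 - f(\overline{x_{\alpha(1)}}, \dots, \overline{x_{\alpha(n)}}) = 1 - f^{(\alpha)}(\overline{\tuple{x}}) = (f^{(\alpha)})^{d}(\tuple{x}).
\]

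By structural induction on the term $\dd{t}_j$, the identity $\dd{t}_j(h_1^{(\alpha)}, \dots, h_k^{(\alpha)}) = \dd{t}_j(h_1, \dots, h_k)^{(\alpha)}$ then holds for every $j \in [l]$. Reading off components, this is exactly the statement that $\xi^{(m)}((h_1,\dots,h_k)^{(\alpha)}) = (\xi^{(n)}(h_1,\dots,h_k))^{(\alpha)}$, so $\xi$ preserves minors and is therefore a minion homomorphism. No step is especially delicate; the only thing worth flagging is the dual case of the minor-commutation check, where one has to notice that negating each coordinate and then applying $\alpha$ gives the same result as applying $\alpha$ first and then negating, which is what makes $^{d}$ commute with arbitrary minors rather than just with permutations of variables.
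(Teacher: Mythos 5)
Your proof is correct, but it is organized differently from the paper's. The paper's induction proves a representation result: for every term $\dd{s}$ there is a single $2k$-ary Boolean operation $\tilde{\dd{s}}$ such that $(\dd{s}(\tuple{h}))(\tuple{a}) = \tilde{\dd{s}}(h_1(\tuple{a}),\dots,h_k(\tuple{a}),h_1(\overline{\tuple{a}}),\dots,h_k(\overline{\tuple{a}}))$ for all $\tuple{a}$; minor preservation is then a single direct computation with $\tilde{\dd{s}}$, resting on the fact that both $\tuple{a}\mapsto h_i(\tuple{a}\circ\alpha)$ and $\tuple{a}\mapsto h_i(\overline{\tuple{a}\circ\alpha}) = h_i(\overline{\tuple{a}}\circ\alpha)$ are minors of $h_i$ evaluated coherently. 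You instead push the induction directly into the statement to be proved, showing that each constructor $\wedge$, $\vee$, $^{d}$ individually commutes with taking minors and concluding $\dd{t}_j(h_1^{(\alpha)},\dots,h_k^{(\alpha)}) = \dd{t}_j(h_1,\dots,h_k)^{(\alpha)}$ by structural induction. Both arguments hinge on the same observation, which you correctly flag: negating all coordinates commutes with reindexing by $\alpha$, which is what makes the dual case go through. Your route is the more economical one here, as it avoids the auxiliary operation $\tilde{\dd{s}}$ entirely; the paper's pointwise representation is slightly heavier but makes explicit that a term's value at $\tuple{a}$ depends only on the component values at $\tuple{a}$ and $\overline{\tuple{a}}$, which is a reusable structural fact. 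A minor point in your favour: you verify that the constructors preserve idempotency (so that $\xi$ really lands in $\idemp_l$), a step the paper only asserts as easy.
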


\begin{proof} 
    First we claim that for every term $\dd{s}$ over $\{\dd{h}_1,\dots,\dd{h}_k\}$ in the signature $\{\wedge,\vee,\ ^{d}\}$ there exists a Boolean operation $\Tilde{\dd{s}}$ of arity $2k$ such that for every $n \in \mathbb{N}$, every $k$-tuple of $n$-ary Boolean operations $\tuple{h}=(h_1, \dots, h_k)$, and every $\tuple{a} \in \{0,1\}^n$, we have
$$
    (\dd{s}(\tuple{h}))(\tuple{a}) = \Tilde{\dd{s}}(h_1(\tuple{a}), \dots, h_k(\tuple{a}), h_1(\overline{\tuple{a}}), \dots, h_k(\overline{\tuple{a}}))).
$$
    The claim is proved by induction of the depth of $\dd{s}$. The base case when  $\dd{s} = \dd{h}_j$ for some $j$ is clear. If $\dd{s} = \dd{s}_1 \wedge \dd{s}_2$, we have
\begin{align*}
    (\dd{s}(\tuple{h}))(\tuple{a}) 
    &= (\dd{s}_1(\tuple{h}) \wedge \dd{s}_2(\tuple{h}))(\tuple{a}) 
    = (\dd{s}_1(\tuple{h}))(\tuple{a}) \wedge (\dd{s}_2(\tuple{h}))(\tuple{a}) \\
    &= \Tilde{\dd{s}}_1(h_1(\tuple{a}), \dots, h_k(\tuple{a}),h_1(\overline{\tuple{a}}), \dots, h_k(\overline{\tuple{a}})) \\ &\quad \wedge \Tilde{\dd{s}}_2(h_1(\tuple{a}), \dots, h_k(\tuple{a}),h_1(\overline{\tuple{a}}), \dots, h_k(\overline{\tuple{a}})),
\end{align*}
so we can define
$$
\Tilde{\dd{s}}(a_1, \dots, a_k, b_1, \dots, b_k) = \tilde{\dd{s}}_1(a_1, \dots, a_k, b_1, \dots, b_k) \wedge \tilde{\dd{s}}_2 (a_1, \dots, a_k, b_1, \dots, b_k).
$$
The proof for $\dd{s} = \dd{s}_1 \vee  \dd{s}_2$ is completely analogous. Finally, if $\dd{s} = \dd{s}_1^d$, then we have
\begin{align*}
    (\dd{s}(\tuple{h}))(\tuple{a}) 
    &= (\dd{s}_1^d(\tuple{h}))(\tuple{a}) 
    = \overline{(\dd{s}_1(\tuple{h}))(\overline{\tuple{a}})}\\
    &= \overline{ \Tilde{\dd{s}}_1(h_1(\overline{\tuple{a}}), \dots, h_k(\overline{\tuple{a}}), h_1(\tuple{a}), \dots, h_k(\tuple{a}))}
\end{align*}
so we can define
$$
\Tilde{\dd{s}}(a_1, \dots, a_k, b_1, \dots, b_k) = \overline{\Tilde{\dd{s}}_1(b_1, \dots, b_k, a_1, \dots, a_k)}.
$$
    
    The mapping $\xi$ clearly  preserves arities. It also easy to see  that $\dd{t}_j(h_1, \dots, h_k)$ is idempotent, so $\xi$ is correctly defined. It remains to verify that that for every $n\in\mathbb{N}$, every $n$-ary $k$-sorted $\tuple{h}=(h_1, \dots, h_k) \in \idemp_k$, every $m \in \mathbb{N}$, and every $i_1$, \dots, $i_n \in [m]$, we have
    $$
    \xi(\tuple{h}) \circ (\tuple{\pi}^m_{i_1}, \tuple{\pi}^m_{i_2}, \dots, \tuple{\pi}^m_{i_n}) = \xi(\tuple{h} \circ (\tuple{\pi}^m_{i_1}, \tuple{\pi}^m_{i_2}, \dots, \tuple{\pi}^m_{i_n})).
    $$
    Both sides are $l$-tuples of $m$-ary Boolean operations. We need to show that for each $j \in [l]$, the $j$th Boolean operations are the same on both sides. That is, we need to prove
    $$
    \dd{t}_j(\tuple{h}) \circ (\pi^m_{i_1}, \pi^m_{i_2}, \dots, \pi^m_{i_n}) = \dd{t}_j(\tuple{h} \circ (\tuple{\pi}^m_{i_1}, \tuple{\pi}^m_{i_2}, \dots, \tuple{\pi}^m_{i_n})).
    $$
    For any tuple $\tuple{a} = (a_1, \dots, a_m) \in \{0,1\}^m$ we have 
    \begin{align*}
    (\dd{t}_j(\tuple{h}) \circ &(\pi^m_{i_1}, \dots, \pi^m_{i_n}))(\tuple{a}) \\
      &= \dd{t}_j(\tuple{h})(\pi^m_{i_1}(\tuple{a}), \dots \pi^m_{i_n}(\tuple{a})) \\
      &= \dd{t}_j(\tuple{h})(a_{i_1}, \dots, a_{i_n}) \\
      &= \tilde{\dd{t}}_j(h_1(a_{i_1}, \dots, a_{i_n}), \dots, h_k(a_{i_1}, \dots, a_{i_n}), \\ 
      &\quad\quad\quad h_1(\overline{a_{i_1}}, \dots, \overline{a_{i_n}}), \dots, h_k(\overline{a_{i_1}}, \dots, \overline{a_{i_n}}))\\
      &=  \tilde{\dd{t}}_j((h_1 \circ (\pi^m_{i_1},\dots,\pi^m_{i_n}))(\tuple{a}), \dots, (h_k \circ (\pi^m_{i_1},\dots,\pi^m_{i_n}))(\tuple{a}), \\
         & \quad \quad \quad (h_1 \circ (\pi^m_{i_1},\dots,\pi^m_{i_n}))(\overline{\tuple{a}}), \dots, (h_k \circ (\pi^m_{i_1},\dots,\pi^m_{i_n}))(\overline{\tuple{a}}))\\
      &=  (\dd{t}_j(\tuple{h} \circ (\tuple{\pi}^m_{i_1},  \dots, \tuple{\pi}^m_{i_n})))(\tuple{a}),
    \end{align*}
    as required.
\end{proof}

The second ingredient is a criterion for satisfiability of a 2-CNF from \cite{ASPVALL1979121}, which we now state. \emph{2-CNF} is a formula of propositional logic of a specific form: it is a conjunction of clauses, each clause is a disjunction of two literals, and each literal is either a variable or a negated variable. Such a formula is e.g.
$$
(x \OR y) \AND (\neg y \OR z) \AND (y \OR y).
$$
(We use a nonstandard notation for disjunction and conjunction so that we do not overload the symbols $\wedge$ and $\vee$ too much.)
A 2-CNF is \emph{satisfiable} if there exists an assignment $\phi: \mathrm{variables} \to \{\true,\false\}$ making the formula true.

The \emph{implication graph} of a 2-CNF is the following directed graph. Vertices are the variables and their negations. For each clause, the graph contains the edges corresponding to the implications (typically two) that are logically equivalent to the clause; and there are no other edges. In the example above, the implication graph has edges 
$$
\neg x \to y, \ 
\neg y \to x, \ 
y \to z, \ 
\neg z \to \neg y, \ 
\neg y \to y.
$$

The criterion for satisfiability is as follows.

\begin{theorem}[\cite{ASPVALL1979121}] \label{thm:cnf}
A 2-CNF formula is satisfiable if and only if there is no variable $x$ such that there exists a directed walk $x \to \dots \to \neg x$ and a directed walk $\neg x \to \dots \to x$. 
\end{theorem}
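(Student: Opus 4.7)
The plan is to prove the two directions of the biconditional separately. The forward direction (satisfiability implies the walk condition) is the easier one: given a satisfying assignment $\phi$, I would show by induction on walk length that if $\ell_1 \to \ell_2 \to \cdots \to \ell_j$ is a directed walk in the implication graph, then $\phi(\ell_1) = \true$ forces $\phi(\ell_j) = \true$, since each edge corresponds to an implication that is logically equivalent to one of the original clauses and is therefore true under $\phi$. If both walks $x \to \cdots \to \neg x$ and $\neg x \to \cdots \to x$ existed, then both $\phi(x) = \true$ and $\phi(\neg x) = \true$ would follow regardless of the actual value assigned to $x$, which is absurd.

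For the converse, I would work with the strongly connected components (SCCs) of the implication graph. The crucial structural feature is skew-symmetry: each clause $\ell \OR \ell'$ contributes both the edges $\neg \ell \to \ell'$ and $\neg \ell' \to \ell$, so an edge $a \to b$ is present iff $\neg b \to \neg a$ is present. Consequently SCCs come in pairs: for every SCC $C$ the set $\neg C = \{\neg \ell : \ell \in C\}$ is also an SCC, and the walk condition is equivalent to the statement $C \neq \neg C$ for every SCC $C$, i.e., no SCC contains both a literal and its negation.

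Assuming this condition, I would construct a satisfying assignment by processing the condensation (the DAG of SCCs) in reverse topological order. Whenever we meet an SCC $C$ that has not yet been treated, we set all its literals to $\true$ and all literals in $\neg C$ to $\false$; since $C \neq \neg C$, this is well-defined and each variable receives exactly one truth value. The main obstacle is verifying that no edge $\ell_1 \to \ell_2$ is violated, i.e., we never end up with $\phi(\ell_1) = \true$ and $\phi(\ell_2) = \false$. The argument uses that $[\ell_2]$ is reachable from $[\ell_1]$ in the condensation (so $[\ell_2]$ is processed no later than $[\ell_1]$), combined with skew-symmetry applied to the dual edge $\neg \ell_2 \to \neg \ell_1$: one shows that if $[\ell_1]$ was assigned $\true$, then at the earlier step when $[\ell_2]$ was treated, the dual SCC $\neg [\ell_2] = [\neg \ell_2]$ could not yet have been assigned $\true$, forcing $[\ell_2]$ itself to receive $\true$. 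Since every clause is logically equivalent to the conjunction of its two implications, the resulting $\phi$ satisfies the original 2-CNF.
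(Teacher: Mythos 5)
Your proof is correct. Note that the paper does not prove this statement at all --- it is imported as a black box from the cited reference \cite{ASPVALL1979121} --- so there is no in-paper argument to compare against; what you have written is essentially the standard Aspvall--Plass--Tarjan proof (skew-symmetry of the implication graph, SCCs paired with their negations, and assignment by reverse topological order of the condensation), and the verification that no implication edge is violated is carried out correctly, including the case where $[\ell_1]$ and $[\ell_2]$ coincide.
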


We are ready to reduce the descriptions. 

\begin{theorem} \label{thm:reduced-description}
    For every description  $\descript$, there exists a description $\descript'$ in a reduced form such that
    $\Clo(\descript) \sim \Clo(\descript')$.
\end{theorem}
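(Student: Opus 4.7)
My plan is to transform $\descript$ into a reduced $\descript'$ through a sequence of equivalence-preserving moves, each realized by term-defined minion homomorphisms in both directions whose minor-preservation is guaranteed by \Cref{lem:preserving-minors}. First I would eliminate inter-sort equalities: whenever $\descript$ contains $\dd{h_i = h_j^{\epsilon}}$ with $i \ne j$, I would drop sort $j$ by substituting $h_i^{\epsilon}$ for $h_j$ throughout, with the forward map projecting out the $j$th coordinate and the backward map reinstating $h_i^{\epsilon}$ there. After iterating, the only surviving equalities are of the form $\dd{h_i = h_i^d}$; I label those sorts as $g$'s and the rest as $f$'s.

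Next I would try to orient the $f$-sorts---decide for each $f_i$ whether to replace it by its dual---so that every $\triangleleft$-constraint ends up in reduced form. Each constraint admits two representations via skew symmetry ($\alpha \triangleleft \beta \Leftrightarrow \beta^d \triangleleft \alpha^d$), and demanding that at least one of the two is in reduced form after flipping translates to a 2-SAT instance in flip variables $x_i$: a 2-clause for each $f$-$f$ constraint, and a unit clause whenever a $g$ is involved (since the reduced form forbids $g$ on the left except in the self-loop $\dd{g \triangleleft g}$). If this 2-SAT is satisfiable, the flipping $(h_1,\dots,h_k) \mapsto (h_1^{a_1},\dots,h_k^{a_k})$ is a term-defined minion isomorphism that normalizes the description.

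The main obstacle is the case when the 2-SAT is unsatisfiable. By \Cref{thm:cnf} there is then a variable $x_i$ admitting directed walks $x_i \to \cdots \to \neg x_i$ and $\neg x_i \to \cdots \to x_i$ in the implication graph. I would translate each implication edge back to its witnessing $\triangleleft$-constraint, using skew symmetry on unit-clause edges of the form $\dd{f \triangleleft g_b}$ or $\dd{g_b \triangleleft f}$ to insert $g_b$ as a transit node (so that, e.g., $f_j \triangleleft g_b \triangleleft f_j^d$ is obtained from a single $g$-involving constraint). I would then check that consecutive constraints along the walk share matching intermediate labels, so that strong transitivity collapses the whole walk into $\dd{f_i \triangleleft f_i^d}$ and the reverse walk into $\dd{f_i^d \triangleleft f_i}$. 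Evaluating both at $\tuple{a}=\tuple{b}$ yields $f_i(\tuple{a})+f_i(\overline{\tuple{a}}) \leq 1$ and $\geq 1$, forcing $f_i = f_i^d$. Hence $\dd{f_i = f_i^d}$ is already implied by $\descript$; adding it does not alter $\Clo(\descript)$, $f_i$ becomes a $g$, and I re-run the 2-SAT. Since each iteration strictly reduces the number of $f$-sorts, the procedure terminates, at worst when all sorts are $g$'s.

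Finally I would clean up: a constraint $\dd{g_a \triangleleft g_b}$ with $a \ne b$ yields $g_b \triangleleft g_a$ by skew symmetry together with $g=g^d$, so $g_a = g_b$ is a consequence and I merge the two sorts via the first step; and a remaining $\triangleleft$-cycle $\dd{f_{i_1} \triangleleft \cdots \triangleleft f_{i_l} \triangleleft f_{i_1}}$ with $l>1$ forces $f_{i_1}=\cdots=f_{i_l}$ by transitivity of $\triangleleft$ and antisymmetry of $\leq$, after which merging leaves the allowed self-loop $\dd{f \triangleleft f}$. The resulting $\descript'$ satisfies all three clauses of the reduced form, and composing all the term-defined maps constructed along the way gives the required equivalence $\Clo(\descript) \sim \Clo(\descript')$.
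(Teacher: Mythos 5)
Your proposal uses the same central device as the paper — the Aspvall--Plass--Tarjan criterion (\Cref{thm:cnf}) applied to a 2-SAT encoding of the duality flips $\dd{f_i} \mapsto \dd{f_i^d}$ — and the same auxiliary machinery (\Cref{lem:preserving-minors} for term-defined homomorphisms, skew symmetry, strong transitivity), so the proofs are close. But the overall architecture is genuinely different. The paper opens with a \emph{minimality assumption}: $\descript$ is taken over the smallest possible number of symbols among all descriptions of an equivalent clone. This assumption immediately kills every forced coincidence of sorts (no forced $h_i = h_j^{(d)}$ for $i \neq j$, no forced $f_i = f_i^d$, no nontrivial $\triangleleft$-cycle in the final description), so the paper can argue by contradiction that the 2-SAT is \emph{always satisfiable} and never needs to actually examine the unsatisfiable case. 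You, instead, proceed purely constructively: you merge sorts when an explicit $=$-constraint demands it, run the 2-SAT, and — the interesting extra content of your proof — show that an unsatisfiable 2-SAT \emph{constructively} reveals a forced self-duality $f_i = f_i^d$ by translating the two offending walks $x_i \to \cdots \to \neg x_i$ and $\neg x_i \to \cdots \to x_i$ back into $\triangleleft$-chains and collapsing them via strong transitivity, after which $\dd{f_i}$ is promoted to a $\dd{g}$ and the process re-runs. You then handle $\triangleleft$-cycles and $\dd{g_a \triangleleft g_b}$ in a final cleanup. What the paper's route buys is brevity: one contradiction argument replaces your iteration, and it never needs to say how one would actually detect a hidden self-duality. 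What your route buys is an explicit algorithm that takes an arbitrary $\descript$ to a reduced one, without needing to quantify over all equivalent descriptions in advance.

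Two small points worth tightening in your write-up. First, the cleanup step interacts with the earlier steps: merging the sorts of a $\triangleleft$-cycle can create a new $\triangleleft$-cycle through a neighbouring sort, and promoting an $\dd{f_i}$ to a $\dd{g}$ can create a $\dd{g_a \triangleleft g_b}$ constraint, so you should either compute strongly connected components of the $\triangleleft$-graph and merge them all at once, or state explicitly that all three stages are iterated until stable (termination then follows because the total number of sorts strictly decreases on each nontrivial pass, which subsumes your count of $f$-sorts). Second, when translating an implication edge coming from an $f$--$f$ constraint back to a $\triangleleft$-relation, the two edges of a single clause correspond to the constraint and its skew-symmetric mirror; this is exactly the "matching intermediate labels" you invoke, and it is worth spelling out that consecutive labels agree precisely because each edge is indexed by a literal in $\{x_j,\neg x_j\}$ which deterministically picks $f_j$ or $f_j^d$. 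With those two clarifications in place, the argument is sound.
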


\begin{proof}
   Let $\descript$ be a description over $\{\dd{h}_1, \dots, \dd{h}_k\}$.
   Without loss of generality, assume that there is no description over a smaller set of symbols which describes a multisorted clone equivalent to $\Clo(\descript)$ (if there is such, we can replace the original description with one which has the smallest number of symbols and describes an equivalent clone). 

   A consequence of this \emph{minimality assumption} is that there are no $i,j \in [k]$ with $i \neq j$ such that every $(h_1, \dots, h_k) \in \Clo(\descript)$ satisfies $h_i = h_j$. Indeed, if there are such $i$ and $j$, say $i=1$ and $j=2$, then we define $\descript'$ over the smaller set of symbols $\{\dd{h}_2, \dots, \dd{h}_k\}$  by replacing every occurrence of $\dd{h}_1$ by $\dd{h}_2$ and every occurrence of $\dd{h}_1^d$ by $\dd{h}_2^d$. The mapping $\xi: \Clo(\descript) \to \Clo(\descript')$  defined by 
$\xi(h_1, \dots, h_k) =(h_2, \dots h_k)$ is correctly defined: the constraints in $\descript'$ are satisfied since $h_1$ is always equal to $h_2$ when $(h_1, \dots, h_k) \in \Clo(\descript)$. Also $\xi$ preserves minors by \Cref{lem:preserving-minors}, therefore it is a minion homomorphism. On the other hand, it is also easy to see that $\xi: \Clo(\descript') \to \Clo(\descript)$ defined by $\xi(h_2, h_3, \dots, h_k)=(h_2, h_2, h_3, \dots, h_k)$ is a minion homomorphism. Therefore $\Clo(\descript)$ is equivalent to $\Clo(\descript')$, a contradiction to the minimality assumption.

Similarly, there are no $i,j \in [k]$ with $i \neq j$ such that every $(h_1, \dots, h_k) \in \Clo(\descript)$ satisfies $h_i = h_j^d$. The differences in the above argument is that while creating $\descript'$ we replace $\dd{h}_1$ by $\dd{h}_2^d$ (instead of $\dd{h}_2$) and $\dd{h}_1^d$ by $\dd{h}_2$, and we define the second homomorphism by $\xi(h_2, h_3, \dots, h_k)=(h_2^d,h_2, h_3, \dots, h_k)$. The  homomorphisms are correctly defined since $(h_1^d)^d = h_1$.

Now we change the set of symbols $\{\dd{h}_1$, \dots, $\dd{h}_k\}$ of $\descript$ (and change the constraints accordingly) to $\{\dd{f}_1, \dots, \dd{f}_n, \dd{g}_1, \dots, \dd{g}_m\}$ ($n+m=k$) so that for each $(f_1, \dots, f_n,g_1, \dots, g_m) \in \Clo(\descript)$ we have $g_1=g_1^d$, \dots, $g_m = g_m^d$ and $m$ is the largest with this property, i.e., for any $i \in [n]$ we have $f_i \neq f_i^d$ for some $(f_1, \dots, f_n,g_1, \dots, g_m) \in \Clo(\descript)$. 

We make several adjustments to $\descript$, none of which changes $\Clo(\descript)$ because of trivial reasons.
We add to $\descript$ all the constraints $\dd{g}_i=\dd{g}_i^d$ ($i \in [m]$),
remove redundant constraints of the form $\dd{f}_i=\dd{f}_i$, $\dd{f}_i^d=\dd{f}_i^d$, $\dd{g}_i = \dd{f}_i$, $\dd{g}_i^{(d)}=\dd{g}_i$, and $\dd{g}_i^d=\dd{g}_i^d$, 
 replace constraints $\dd{f}_i^d \triangleleft \dd{f}_j^d$ by $\dd{f}_j \triangleleft \dd{f}_i$ (which is justified by skew symmetry),
 replace $\dd{g}_i^d$ by $\dd{g}_i$ in every $\triangleleft$-constraint,
 and replace $\dd{g}_i \triangleleft \dd{f}_j$ by $\dd{f}_j^d \triangleleft \dd{g}_i$ and $\dd{g}_i \triangleleft \dd{f}_j^d$ by $\dd{f}_j \triangleleft \dd{g}_i$.
 We refer to these adjustments of $\descript$ as \emph{simple adjustments}.

The only $=$-constraints left are $\dd{g}_i = \dd{g}_i^d$ since $\dd{f}_i = \dd{f}_i^d$ is not in $\descript$ by the choice of $m$ and all the other $=$-constraints were either removed or are impossible by the minimality assumption. Note also that $\dd{g}_i \triangleleft \dd{g}_j$ with $i \neq j$  is not in $\descript$, otherwise every $(f_1, \dots, f_n,g_1, \dots, g_m) \in \Clo(\descript)$ satisfies $g_i \triangleleft g_j = g_j^d \triangleleft g_i^d = g_i$, therefore $g_i=g_j$ by antisymmetry,
 a contradiction to the minimality assumption. The remaining $\triangleleft$-constraints of $\descript$ are thus of the form $\dd{f}_i \triangleleft \dd{f}_j$, $\dd{f}_i \triangleleft \dd{f}_j^d$, $\dd{f}_i^d \triangleleft \dd{f}_j$, $\dd{f}_i \triangleleft \dd{g}_j$, $\dd{f}_i^d \triangleleft \dd{g}_j$, or $\dd{g}_i \triangleleft \dd{g}_i$. We need to get rid of the  cases $\dd{f}_i^d \triangleleft \dd{f}_j$ and $\dd{f}_i^d \triangleleft \dd{g}_j$. 

The description $\descript'$ will be obtained from $\descript$ by selecting some  numbers $j \in \mathbb{N}$ and replacing $\dd{f}_j$ by $\dd{f}_j^d$ and vice versa. Note that whatever set of numbers we take, the obtained clone $\Clo(\descript')$ is equivalent to $\Clo(\descript)$ via the minion homomorphisms defined in both directions as $\xi(f_1, \dots, f_k,g_1, \dots, g_k) = \xi(f_1', \dots, f_k',g_1, \dots, g_k)$, where $f_j'=f_j^d$ if $j$ was selected and $f_j'=f_j$ otherwise. The numbers will be selected according to a satisfying assignment of a 2-CNF defined as follows. The set of variables of the 2-CNF is $\{\dd{f}_1, \dots, \dd{f}_n\}$ and
\begin{itemize}
    \item for each constraint $\dd{f}_i \triangleleft \dd{f}_j$ in $\descript$ we add to the 2-CNF the clause $(\neg \dd{f}_i \OR \dd{f}_j)$,

    \item for each $\dd{f}_i \triangleleft \dd{f}_j^d$ in $\descript$ we add $(\neg \dd{f}_i \OR \neg \dd{f}_j)$,

    \item for each $\dd{f}_i^d \triangleleft \dd{f}_j$ in $\descript$ we add $( \dd{f}_i \OR \dd{f}_j)$,

    \item for each $\dd{f}_i \triangleleft \dd{g}_j$  in $\descript$ we add $(\neg \dd{f}_i \OR \neg \dd{f}_i)$, and

    \item for each $\dd{f}_i^d \triangleleft \dd{g}_j$ in $\descript$ we add $(\dd{f}_i \OR \dd{f}_i)$.
\end{itemize}

We claim that if $\dd{f}_i \to \dd{f}_j$ ($\dd{f}_i \to \dd{f}_j^d$, $\dd{f}_i^d \to \dd{f}_j$, $\dd{f}_i^d \to \dd{f}_j^d$) in the implication graph of the 2-CNF formula, then the corresponding relation $f_i \triangleleft f_j$ ($f_i \triangleleft f_j^d$, $f_i^d \triangleleft f_j$,  $f_i^d \triangleleft f_j^d$) holds for every $(f_1, \dots, f_n, g_1, \dots, g_m) \in \Clo(\descript)$.
This follows from properties of $\triangleleft$ and duals 
as follows. In the first item (i.e., $\dd{f}_i \triangleleft \dd{f}_j$ in $\descript$) we added $(\neg \dd{f}_i \OR \dd{f}_j)$ to the 2-CNF which gives $\dd{f}_i \to \dd{f}_j$ and $\dd{f}_j^d \to \dd{f}_i^d$ in the implication graph; we have $f_i \triangleleft f_j$ (since $\dd{f}_i \triangleleft \dd{f}_j$ in $\descript$) and $f_j^d \triangleleft f_i^d$ (by skew symmetry)
for every $(f_1, \dots, g_m) \in \Clo(\descript)$. The second and third items are similar. The fourth item ($\dd{f}_i \triangleleft \dd{g}_j$) gives $\dd{f}_i \to (\neg \dd{f}_i)$ in the implication graph and for any $(f_1, \dots, g_m) \in \Clo(\descript)$ we have $f_i \triangleleft g_j=g_j^d \triangleleft f_i^d$. The fifth item is similar. 

It follows that there is no symbol $\dd{f}_i$ such that $\dd{f}_i \to \dots \to \neg \dd{f}_i$ and $\neg\dd{f}_i \to \dots \to \dd{f}_i$ in the implication graph. Indeed, otherwise $f_i \triangleleft \dots \triangleleft f_i^d \triangleleft \dots \triangleleft f_i$ for every $(f_1, \dots, g_m) \in \Clo(\descript)$, so $f_i = f_i^d$, a contradiction to the choice of $m$. 
Let $\phi$ be a satisfying assignment to our 2-CNF guaranteed by \Cref{thm:cnf}.

We create $\descript'$ by replacing $\dd{f}_j$ by $\dd{f}_j^d$ and $\dd{f}_j^d$ by $\dd{f}_j$ for every $j$ such that $\phi(\dd{f}_j)=\true$, and making the simple adjustments. Observe that the only $=$-constraints in $\descript'$ are still $\dd{g}_i = \dd{g}_i^d$ (and we have all such in $\descript'$) and that all $\triangleleft$-constraints in $\descript'$ are of the form $\dd{f}_i \triangleleft \dd{f}_j$, $\dd{f}_i \triangleleft \dd{f}_j^d$, $\dd{f}_i \triangleleft \dd{g}_j$, or 
$\dd{g}_i \triangleleft \dd{g}_i$ by the choice of the 2-CNF and the adjustments. It remains to observe that there are no cycles $\dd{f}_{i_1} \triangleleft \dd{f}_{i_2} \triangleleft \dots \triangleleft \dd{f}_{i_l} \triangleleft \dd{f}_{i_1}$ in $\descript'$ with $l > 1$ since otherwise $f_{i_1} \triangleleft f_{i_2} \triangleleft \dots \triangleleft f_{i_l} \triangleleft f_{i_1}$ and thus $f_{i_1}=f_{i_2}=\dots=f_{i_l}$ for every $(f_1, \dots, g_m) \in \Clo(\descript')$, which is impossible by the minimality assumption. Now the description $\descript'$ is reduced and $\Clo(\descript')$ is equivalent to $\Clo(\descript)$. The proof is concluded.
\end{proof}

\subsection{Collapse} \label{subsec:collapse}

In this subsection we show that every multisorted Boolean clone of the form $\Clo(\descript)$, where $\descript$ is in reduced form, is equivalent to a multisorted minion from an explicit collection that we now introduce.

\begin{definition} \label{def:thecores}
For a positive integer $k$ we define multisorted Boolean minions as follows.
    \begin{align*}
        \minAk &= \{(h_1, h_2, \dots, h_k) \in \idemp_k \mid h_1 \triangleleft h_2  \triangleleft \dots \triangleleft h_k \leq h_k^d\} \\ 
        \minBk &= \{(h_1, h_2, \dots, h_k) \in \idemp_k \mid h_1 \triangleleft h_2  \triangleleft \dots \triangleleft h_k \triangleleft h_k^d\} \\
        \minCk &= \{(h_1, h_2, \dots, h_k) \in \idemp_k \mid h_1 \triangleleft h_2 \triangleleft \dots \triangleleft h_{k-1} \leq h_{k} = h_{k}^d, \ h_{k-1} \triangleleft h_{k-1}^d\}  \\        
        \minDk &= \{(h_1, h_2, \dots, h_k) \in \idemp_k \mid h_1 \triangleleft h_2  \triangleleft \dots \triangleleft h_k = h_k^d\} \\
        \minBinf &= \{(h) \in \idemp_1 \mid h \triangleleft h \triangleleft h^d \} \\
        \minCinf &= \{ (h_1, h_2) \in \idemp_2 \mid h_1 \triangleleft h_1 \triangleleft h_2 = h_2^d\} \\ 
        \minDinf &= \{(h) \in \idemp_1 \mid h \triangleleft h = h^d\}
\end{align*}
\end{definition}

Definition for $\minCx{1}$ might be unclear: it should be read as $\minCx{1}=\minDx{1} = \{(h) \in \idemp_1 \mid h=h^d\}$. This minion will thus not appear in the classification.  One may also define the missing $\minAinf$ as $\minAinf=\minBinf = \{(h) \in \idemp_1 \mid h \triangleleft h \leq h^d\}$. 

The notation $\minAk$, \dots for the minions should be viewed as temporary. It was chosen simply because these are initial letters of the alphabet, there is no deeper reason behind it. Note that $\minAk$ has nothing to do with the $k$-sorted set $\tuple{A}$ and that  $\minDk$ has nothing to do with description $\descript$. We hope that these almost-collisions will not cause confusion.

Not all of these multisorted function minions are multisorted function clones. This is caused by $\leq$ in the definitions. We remark that $f\leq g$ iff $(f,g)$ preserves the pair of relations $(\{(0,0),(1,1)\},\{(0,0),(0,1),(1,1)\})$ of type $(1,2)$.
The simplest and interesting example of such a minion is $\minAx{1} = \{(h) \in \idemp_1 \mid h \leq h^d\}$ determined by the pair of relations $(\{(0,1),(1,0)\},\{(0,0),(0,1),(1,0)\})$. In other words, it consists of (1-tuples of) Boolean operations that avoid $\vee$ as a minor. It is the minion core of $\idemp_k$ for any $k$ and it is the largest element of the whole minion homomorphism poset that is strictly below $\minion{T}$. The minion was also identified in \cite{weaklyterminal} (see Theorem 3.8 there and remarks below). 

Also notice that all the definitions can be equivalently presented in a more symmetric way. For instance, $\tuple{h} \in \minAk$ if, and only if, 
$h_1 \triangleleft h_2  \triangleleft \dots \triangleleft h_k \leq h_k^d \triangleleft \dots \triangleleft h_2^d \triangleleft h_1^d.$
Observe further that every $\tuple{h}$ from any $k$-sorted minion from the list satisfies
$
h_1 \triangleleft h_2 \triangleleft \dots \triangleleft h_{k-1} \leq h_k \leq h_k^d \leq h_{k-1}^d \triangleleft \dots \triangleleft h_2^d \triangleleft h_1^d
$
and $h_i \triangleleft h_i^d$ for every $i \in [k-1]$. 
It also follows that $h_i \leq h_i^d$ for every $i \in [k]$. 

For each of these specific minions we give a sufficient condition under which a multisorted Boolean clone described by a reduced description is equivalent to it. It will follow from the subsequent results that this condition is also necessary, because the above multisorted minions are pairwise inequivalent (where $\minCk$ is only considered for $k \geq 2$). 
We need few concepts to state the results.

\begin{definition} \label{def:rank}
Let $\descript$ be a description over $(\dd{f}_1, \dots, \dd{f}_n, \dd{g}_1, \dots, \dd{g}_m)$ in a reduced form. 

We say that a function symbol $\dd{f}_i$ is \emph{monotone} if $\dd{f}_i \triangleleft \dd{f}_i$ is in $\descript$ and similarly for $\dd{g}_i$.

We define the remaining concepts only for $\descript$ without monotone symbols.
A \emph{path of length $l$} to $\dd{f}_i$ is a sequence
$\dd{f}_{i_1} \triangleleft \dd{f}_{i_2} \triangleleft \dots \triangleleft \dd{f}_{i_{l-1}} \triangleleft \dd{f}_{i_l}$ in $\descript$ where $i=i_l$.
We define the \emph{rank} of $\dd{f}_i$, denoted as $\rank(\dd{f}_i)$, to be the length of the longest such path plus one, i.e.,
    \[
    \rank (\dd{f}_i) = \max \{l \in \mathbb{N} \mid \dd{f}_{i_1} \triangleleft \dd{f}_{i_2} \triangleleft \dots \triangleleft \dd{f}_{i_{l-1}} \triangleleft \dd{f}_{i_l}  \mbox{ is a path to $\dd{f}_i$}\}.
    \]
Finally, the \emph{chain rank} of $\descript$ is defined as
\begin{align*}
\chrd = \max ( &\{\rank(\dd{f}_i) \mid i \in [n]\} \ \cup \\
                 & \{ \rank(\dd{f}_i)+\rank(\dd{f}_j) \mid i,j \in [n], \ \dd{f}_i \triangleleft \dd{f}_j^d \text{ is in } \descript \} \ \cup \\
                 & \{ 2\rank(\dd{f}_i) + 1 \mid i \in [n], \ \dd{f}_i \triangleleft \dd{g}_j \text{ is in $\descript$ for some } j \in [m]\} )
\end{align*}
\end{definition}

The rank of a symbol $\dd{f}_i$ is well defined since the description is reduced and there are no monotone symbols, so there are no $\triangleleft$-cycles.
Also note that if $\dd{f}_i \triangleleft \dd{f}_j$ ($i \neq j$), then $\rank(\dd{f}_i) < \rank(\dd{f}_j)$ since we can append $\dd{f}_j$ to a longest path to $\dd{f}_i$ and get a longer path to $\dd{f}_j$.

\begin{theorem} \label{thm:collapse}
Let  $\descript$ be a description over $(\dd{f}_1, \dots, \dd{f}_n, \dd{g}_1, \dots, \dd{g}_m)$ in a reduced form. 

Suppose first that $\descript$ does not contain a monotone symbol. 
Let $l= \lceil \chrd/2 \rceil$. 
\begin{itemize}
    \item If $m=0$ and $\chrd$ is odd, then $\Clo(\descript) \sim \minAl$.
    \item If $m=0$ and $\chrd$ is even, then $\Clo(\descript) \sim \minBl$.
    \item If $m>0$ and $\chrd$ is even, then $\Clo(\descript) \sim \minClp$.
    \item If $m>0$ and $\chrd$ is odd, then  $\Clo(\descript) \sim \minDl$.
\end{itemize}

Suppose now that $\descript$ contains a monotone symbol. 
\begin{itemize}
    \item If $m=0$, then $\Clo(\descript) \sim \minBinf$.
    \item If $m>0$ and no $\dd{g}_j$ is monotone, then $\Clo(\descript) \sim \minCinf$.
    \item If $m>0$ and some $\dd{g}_j$ is monotone, then $\Clo(\descript) \sim \minDinf$.
\end{itemize}

\end{theorem}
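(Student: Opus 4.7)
The strategy is to prove $\Clo(\descript) \sim \minion{M}$ in each case by exhibiting minion homomorphisms in both directions between $\Clo(\descript)$ and the target $\minion{M}$. By \Cref{lem:preserving-minors}, every tuple of terms in the signature $\{\wedge, \vee, {}^d\}$ applied to the source's function symbols defines a minion homomorphism, provided only that the resulting operations are idempotent (a routine check). Both directions will therefore be given by explicit term tuples.

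The backward direction $\minion{M} \to \Clo(\descript)$ I would define by a \emph{folding} assignment. In the non-monotone cases, a symbol $\dd{f_i}$ of rank $r$ is sent to $h_r$ when $r \leq l$ and to an appropriate dual (specifically $h_{2l-r}^d$ for the weak-middle targets $\minAl, \minDl$, or $h_{2l-r+1}^d$ for the strong-middle targets $\minBl, \minClp$) when $r > l$, while each self-dual symbol $\dd{g_j}$ is mapped to the self-dual component of $\minion{M}$. Every $\triangleleft$-constraint of $\descript$ then translates into a chain in $\minion{M}$ by strong transitivity and skew symmetry; the calibrated bounds in the definition of chain rank (in particular the $+1$ that accompanies $\dd{f_i \triangleleft g_j}$) guarantee that no constraint is forced to cross the target's middle in an incompatible way. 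The monotone cases simply send every symbol uniformly to the target's monotone (or monotone self-dual) generator.

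The forward direction $\Clo(\descript) \to \minion{M}$ collapses the longer rank structure of $\descript$ into the shorter target chain. I would define each component $h_r$ as a term of the form $\bigvee\{f_i : \rank(\dd{f_i}) \leq r\} \wedge \bigwedge\{f_j^d : \rank(\dd{f_j}) \geq 2l - r + 1\}$, with the exact index ranges adjusted according to the target's parity and self-dual profile. Compatibility of $\triangleleft$ with $\wedge, \vee$, strong transitivity, and the identity $f \wedge f^d \leq (f \wedge f^d)^d$ together yield the required chain in the target. When a self-dual final component is needed (the $\minCk, \minDk, \minCinf, \minDinf$ cases) I would apply \Cref{lem:h} to project the last constructed operation onto a self-dual interval anchored by one of the $g_j$'s. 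The monotone cases collapse by a similar but simpler expression, using monotonicity propagation along $\triangleleft$-chains.

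The main obstacle will be the case analysis for the forward map: the single term template has to simultaneously respect all four constraint types ($\dd{f_i \triangleleft f_j}$, $\dd{f_i \triangleleft f_j^d}$, $\dd{f_i \triangleleft g_j}$, and $\dd{g_j = g_j^d}$), and deliver exactly the middle strength that distinguishes $\minAl/\minDl$ (weak middle $h_l \leq h_l^d$) from $\minBl/\minClp$ (strong middle $h_l \triangleleft h_l^d$). The parity of $\chrd$ governs this distinction, and verifying it requires tracing how the longest $\triangleleft$-paths through $\descript$ (possibly crossing dual anchors $f_j^d$ or self-dual anchors $g_j$) unfold in the target chain, showing that odd $\chrd$ leaves a one-step slack precisely at the middle while even $\chrd$ fills it with a genuine $\triangleleft$.
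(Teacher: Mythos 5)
Your overall scheme is the right one --- both directions by term-defined maps justified via \Cref{lem:preserving-minors} --- and your backward direction (the ``folding'' assignment sending a symbol of rank $r$ to $h_r$ or to a dual of $h_{\cdot}$ past the middle, and each $\dd{g_j}$ to the self-dual target component) is essentially what the paper does via its auxiliary sequence $h'_1, \dots, h'_r$. However, the forward direction as you sketch it has a genuine gap: the template
\[
h_r \ =\ \bigvee\{\,f_i : \rank(\dd{f_i}) \leq r\,\} \ \wedge\ \bigwedge\{\,f_j^d : \rank(\dd{f_j}) \geq 2l-r+1\,\}
\]
does not, in general, land in the target minion. The obstruction is that rank alone does not capture the constraint graph of $\descript$; in particular, symbols that do not lie on any path witnessing the chain rank (or that are simply unconstrained) may take values that destroy the bound $h_l \leq h_l^d$. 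Concretely, take $\descript$ over $(\dd{f_1},\dd{f_2},\dd{f_3})$ with the single constraint $\dd{f_1 \triangleleft f_2}$ (reduced, no monotone symbols, $m=0$). Then $\rank(\dd{f_1})=\rank(\dd{f_3})=1$, $\rank(\dd{f_2})=2$, $\chrd=2$, $l=1$, and the target is $\minBx{1}$. Your formula gives $h_1=(f_1\vee f_3)\wedge f_2^d$. Choosing the binary operations $f_1=f_2=\wedge$, $f_3=\vee$ (which satisfy $\wedge\triangleleft\wedge$ and are idempotent, so lie in $\Clo(\descript)$), one gets $h_1=(\wedge\vee\vee)\wedge\wedge^d=\vee$, and $\vee\not\leq\wedge=\vee^d$, so $h_1\notin\minBx{1}$. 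The paper avoids this by \emph{selecting} a single chain witnessing $\chrd$, building from it two interleaved chains $s_1\triangleleft\cdots\triangleleft s_l$ and $t_1\triangleleft\cdots\triangleleft t_l$ with $s_l\leq t_l^d$, and setting $h_i=s_i\wedge t_i$; symbols off the chosen chain are simply ignored, which is what makes the construction well-defined. Your use of \Cref{lem:h} for the self-dual last coordinate and your parity bookkeeping are in the right spirit, but the forward map has to be anchored to a concrete extremal path rather than to rank-selected joins and meets.
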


\begin{proof}
    In each of the cases we need to show that $\Clo(\descript) \sim \minion{M}$ for the corresponding minion $\minion M$. Let $k$ be the number of sorts of $\minion{M}$, that is, $k=l$ for $\minAl$, $\minBl$, and $\minDl$, $k=l+1$ for $\minClp$, $k=1$ for $\minBinf$ and $\minDinf$, and $k=2$ for $\minCinf$. 
        
    In the the first part we prove the inequality $\Clo(\descript) \leq \minion{M}$.
    We define a mapping $\xi \colon \Clo(\descript) \to \minion{M}$ (more precisely, a collection $\xi = (\xi^{(j)})_{j \in \mathbb{N}}$)  by
$$
\xi ((f_1, \dots, f_n, g_1, \dots, g_m)) = (h_1, h_2, \dots, h_k),
$$ 
where each $h_i$, which we specify later, can be defined by a term over the set of symbols $\dd{f}_1$, \dots, $\dd{f}_n$, $\dd{g}_1$, \dots, $\dd{g}_m$ in the signature $\{\wedge,\vee, {}^d\}$ as in \Cref{lem:preserving-minors} (the terms can be chosen so that they only depend on $\descript$ and not on the $f_i$ and $g_i$ or their arity); we will say that the $h_i$ are \emph{term-defined}. The lemma then guarantees that $\xi$ preserves minors. 

Moreover, we will choose the $h_i$ so that the definition of $\xi$ makes sense, that is, $(h_1, \dots, h_k)$ is always in $\minion M$. This will show that $\xi$ is indeed a well-defined minion homomorphisms, concluding the first part. 

We first deal with the case that $\descript$ does not contain a monotone symbol. 
The following claim is crucial. 

\begin{claim} \label{claim:chain}
There are term-defined $t_1, \dots,  t_l, s_1, \dots, s_l$ such that
    $$
    s_1 \triangleleft s_2 \triangleleft \dots \triangleleft s_l \leq t_l^d \triangleleft \dots \triangleleft t_2^d \triangleleft t_1^d
    $$
    and, moreover, if $\chrd$ is even, then $s_l \triangleleft t_l^d$. 
\end{claim}
\begin{proof}
    We distinguish cases following the definition of $\chrd$. We remark that more cases can happen simultaneously, in which case we can select any of them. 
    \begin{itemize}
        \item $\chrd = \rank(\dd{f}_i)$ for some $i$. We denote $r=\rank(\dd{f}_i)$, take a path $\dd{f}_{i_1} \triangleleft \dd{f}_{i_2} \triangleleft \dots$ $\triangleleft \dd{f}_{i_r}$  to $\dd{f}_{i}$ in $\descript$ (i.e., $i_r = i$), and set
        \begin{align*}
            (s_1, \dots, s_l) &= (f_{i_1}, f_{i_2}, \dots, f_{i_l}) \\
            (t_1, \dots, t_l) &= (f_{i_r}^d, f_{i_{r-1}}^d, \dots, f_{i_{r-l+1}}^d).
        \end{align*}
        For even $r$, we have $r=2l$, so $r-l+1=l+1$ and then $s_1 \triangleleft \dots \triangleleft s_l = f_{i_l} \triangleleft f_{i_{l+1}} = t_l^d \triangleleft \dots \triangleleft t_1^d$. For odd $r$, we have $r=2l-1$, so $r-l+1=l$ and we have $s_l=t_l^d$. 

        \item $\chrd = \rank(\dd{f}_i) + \rank(\dd{f}_j)$ for some $i,j$ such that  $\dd{f}_i \triangleleft \dd{f}_j^d$ is in  $\descript$. We denote $r = \rank(\dd{f}_i)$, $r' = \rank(\dd{f}_j)$. Note that we have both $f_i \triangleleft f_j^d$ and $f_j \triangleleft f_i^d$ by skew symmetry, therefore we may assume $r \geq r'$. Take paths $\dd{f}_{i_1} \triangleleft \dots \triangleleft \dd{f}_{i_r}$ to $\dd{f}_i$ and $\dd{f}_{j_1} \triangleleft \dots \triangleleft \dd{f}_{j_{r'}}$ to $\dd{f}_j$. Set
        \begin{align*}
            (s_1, \dots, s_l) &= (f_{i_1}, f_{i_2}, \dots, f_{i_l}) \\
            (t_1, \dots, t_l) &= (\underbrace{f_{j_1}, \dots, f_{j_{r'}}}_{r'}, \underbrace{f_{i_r}^d, f_{i_{r-1}}^d, \dots, f_{i_{r+r'-l+1}}^d)}_{l-r'}.
        \end{align*}
        The conditions are easily verified noting that if $r=r'$, then $s_l = f_{i_r}$ and $t_l = f_{j_{r'}}$, and if $r>r'$, then $t_l = f_{i_{l+1}}^d$ if $r+r'$ is even and $t_l=f_{i_l}^d$ if $r+r'$ is odd.
        
        \item $\chrd = 2\rank(\dd{f}_i)+1$ for some $i \in [n]$ such that $\dd{f}_i \triangleleft \dd{g}_j$ is in $\descript$ for some $j \in [m]$. In this case $\rank(\dd{f}_i)=l-1$. We take a path $\dd{f}_{i_1} \triangleleft \dots \dd{f}_{i_{l-1}}$ to $\dd{f}_i$ and set
        \begin{align*}
            (s_1, \dots, s_l) &= (f_{i_1}, f_{i_2}, \dots, f_{i_{l-1}}, g_j) \\
            (t_1, \dots, t_l) &= (f_{i_1}, f_{i_2}, \dots, f_{i_{l-1}}, g_j) 
        \end{align*}
    \end{itemize}
\end{proof}  

We take $s_i$ and $t_i$ as in the previous claim and derive some consequences of their properties. We denote 
$$
u = s_l \wedge t_l.
$$

\begin{claim} \label{claim:prop}
We have $s_1 \wedge t_1 \triangleleft \dots \triangleleft s_{l-1} \wedge t_{l-1} \triangleleft s_l \wedge t_l = u \leq u^d$. If $s_l \triangleleft t_l^d$ (which is the case when $\chrd$ is even), then $u \triangleleft u^d$. 

Moreover, if $g$ is an arbitrary Boolean operation of the same arity as $u$ (and the $s_i$, $t_i$) such that $g=g^d$, then $u \leq (g \vee u) \wedge u^d = ((g \vee u) \wedge u^d)^d$.
\end{claim}

\begin{proof}
  Since $s_l \leq t_l^d \triangleleft \dots \triangleleft t_1^d$ we also have $t_1 \triangleleft \dots \triangleleft t_l \leq s_l^d$ by skew symmetry. Now $s_1 \triangleleft \dots \triangleleft s_l \leq t_l^d$ and $t_1 \triangleleft \dots \triangleleft t_l \leq s_l^d$. By compatibility of $\leq$ and $\triangleleft$ with $\wedge$, we obtain
  $$
  s_1 \wedge t_1 \triangleleft \dots \triangleleft s_{l-1} \wedge t_{l-1} \triangleleft s_l \wedge t_l \leq t_l^d \wedge s_l^d \leq t_l^d \vee s_l^d = (s_l \wedge t_l)^d = u^d.
  $$
  If $s_l \triangleleft t_l^d$, then $t_l \triangleleft s_l^d$, therefore
  $
  u = s_l \wedge t_l \triangleleft t_l^d \wedge s_l^d \leq u^d,
  $
  so $u \triangleleft u^d$.

  As for the second part, we already know $u \leq u^d$  and we assume $g=g^d$, so the claim follows from \Cref{lem:h}.
\end{proof}

It is now time to define the $h_i$. If $m=0$ we set
$$
(h_1, h_2, \dots, h_{k}) 
    =(s_1 \wedge t_1, s_2 \wedge t_2, \dots, s_{k} \wedge t_{k})
$$
and if $m>0$ we pick, say, $g=g_1$, and set
$$
   (h_1, h_2, \dots, h_{k}) 
    =(s_1 \wedge t_1, s_2 \wedge t_2, \dots, s_{k-1} \wedge t_{k-1}, 
       (g \vee u) \wedge  u^d)
$$
In order to verify $(h_1, \dots, h_k) \in \minion{M}$, it is enough to show the following three properties by the definition of the involved minions. 
\begin{itemize}
    \item We have $h_1 \triangleleft h_2 \triangleleft \dots \triangleleft h_l \leq h_l^d$.
    \item If $\chrd$ is even, then $h_l \triangleleft h_l^d$.
    \item If $m>0$, then $h_l \leq h_k = h_k^d$. 
\end{itemize}
All these properties follow from \Cref{claim:prop}.
This finishes the case that $\descript$ does not contain a monotone symbol.

Assume now that $\descript$ contains a monotone symbol.
If some symbol $\dd{g}_j$ is monotone (so $\minion{M} = \minDinf$) we simply define $(h)=(g_j)$. Otherwise we reuse \Cref{claim:prop} as follows. Let $\dd{f}_i$ be a monotone symbol and set $l=2$, $(s_1,s_2)=(f_i,f_i)$, $(t_1,t_2) = (f_i^d,f_i^d)$, and $u = s_l \wedge t_l$ (as before). The claim implies $u \triangleleft u \leq u^d$, so $u \triangleleft u \triangleleft u^d$. If $m=0$ (so $\minion{M} = \minBinf$) we can thus define $(h)=(u)$. If $m>0$ (so $\minion{M} = \minCinf$) we take $g=g_1$ and the claim shows that $(h_1,h_2) = (u, (g \vee u) \wedge u^d)$ works.
    
In the second part of the proof we define a homomorphism $\zeta: \minion{M} \to \Clo(\descript)$ witnessing $\minion{M} \leq \Clo(\descript)$ by
$$
\zeta ((h_1, \dots, h_k)) = (f_1, \dots, f_n, g_1, \dots, g_m),
$$ 
where the $f_i$ and $g_i$ will again be term-defined. Such a $\zeta$ preserves minors 
so we only need to be careful to define the $f_i$ and $g_i$ so that they are idempotent and $(f_1, \dots, f_n, g_1, \dots, g_m)$ satisfies all the constraints in $\descript$. 

If $\descript$ contains a monotone symbol, then we set $\zeta (h) = (h,h, \dots, h)$ if $m=0$ (so $\minion{M}=\minBinf$) or some  $\dd{g}_j$ is monotone (so $\minion{M}=\minDinf$), and $\zeta (h_1,h_2) = (h_1, \dots, h_1, h_2, \dots, h_2)$ for $\minion{M}=\minCinf$. The satisfaction of all the constraints is immediate in these cases.

Suppose further that $\descript$ does not contain a monotone symbol. 
We denote $r = \chrd$ (so $r=2l$ or $r=2l-1$) and $r_i = \rank(\dd{f}_i)$ for $i \in [n]$. Note that $r_i \leq r$ by the definition of $\chrd$. We set 
\begin{align*}
(h'_1, h'_2, \dots, h'_{r})
&= (h_1, h_2, \dots, h_l, h_{r-l}^d, h_{r-l-1}^d, \dots, h_1^d) \\
    (f_1, \dots, f_n, g_1, \dots, g_m) &= 
    (\underbrace{h'_{r_1}, h'_{r_2}, \dots, h'_{r_n}}_{n \text{ operations }},
    \underbrace{h_k, h_k, \dots, h_k}_{m \text{ operations }}).
\end{align*}
We need to verify that all the constraints are satisfied.
\begin{itemize}
    \item $\dd{f}_i \triangleleft \dd{f}_j$ in $\descript$. We need to verify $h'_{r_i} \triangleleft h'_{r_j}$, which is true since $r_i < r_j$ by the definition of rank (as observed after \Cref{def:rank}) and $h'_1 \triangleleft \dots \triangleleft h'_r$ in all the four cases: Indeed, if $r=2l$ (so $\minion{M}$ is $\minBl$ or $\minClp$), then $h_1 \triangleleft \dots \triangleleft h_l \triangleleft h_l^d=h^d_{r-l}$ and the rest is by skew symmetry. If $r=2l-1$, then $h_1 \triangleleft \dots \triangleleft h_l \triangleleft h_{l-1}^d=h^d_{r-l}$, where the last inequality follows from $h_l \leq h_l^d \triangleleft h_{l-1}^d$.

    \item $\dd{f}_i \triangleleft \dd{f}_j^d$ in $\descript$. We need to verify $h'_{r_i} \triangleleft (h'_{r_j})^d$. We can assume $r_i \leq r_j$ by skew symmetry. We have $r_i + r_j \leq r \leq 2l$ by the definition of $\chrd$. This yields two cases: either both  $r_i$ and $r_j$ are less than or equal to $l$, or $r_i \leq l$ and $r_j > l$.
    In the first case, we have $h'_{r_j} = h_{r_j} = (h'_{r-r_j+1})^d$ so
    $h'_{r_i} \triangleleft (h'_{r_j})^d = h'_{r-r_j+1}$ follows from $r_i < r-r_j+1$ and $h'_1 \triangleleft \dots \triangleleft h'_r$ (see the first item).
    In the second case 
    $h'_{r_i} = h_{r_i}$ and $h'_{r_j} = h_{r-r_j+1}^d$. We thus need to confirm that $h_{r_i} \triangleleft h_{r-r_j+1}$, which is the case since $r_i < r-r_j+1$. 

    \item $\dd{f}_i \triangleleft \dd{g}_j$ in $\descript$. We need to verify $h'_{r_i} \triangleleft h_k$. We have $2r_i + 1 \leq r$ by the definition of $\chrd$, so $r_i \leq l-1$ both if $r=2l-1$ ($\minion{M}=\minDl$) or $r=2l$ ($\minion{M}=\minClp$). The relation $h'_{r_i} \triangleleft h_k$ is satisfied in both cases.

    \item $\dd{g}_j = \dd{g}_j^d$ in $\descript$. We need to verify $h_k = h_k^d$, which is true both in $\minDl$ and $\minClp$.
\end{itemize}
This case analysis concludes the proof of \Cref{thm:collapse}.
\end{proof}

\subsection{Minion homomorphisms} \label{subsec:homo}

In this subsection, we almost finish the classification by proving that the inequalities between the minions introduced in the previous subsection, depicted in \Cref{fig:order0,fig:order}, are satisfied, that some inequalities are not satisfied, and that all the minions introduced in \Cref{def:thecores} are cores. From these facts, the main result of the paper will easily follow.

The first step is simple.

\begin{theorem} \label{thm:inequalities}
    The following inequalities are satisfied.
    \begin{align*}
    & \minDinf \leq \minCinf \leq \minBinf \\
    &\minBinf \leq \dots  \leq \minBx3 \leq \minAx3 \leq \minBx2 \leq \minAx2 \leq \minBx1 \leq \minAx1 \leq \minT \\
    &\minCinf \leq \dots  \leq \minCx4 \leq \minDx3 \leq \minCx3 \leq \minDx2 \leq \minCx2 \leq \minDx1 \\
     &\minCx{k+1} \leq \minBk,  \quad   \minDk \leq \minAk \quad \forall k \in \mathbb{N}
    \end{align*}
\end{theorem}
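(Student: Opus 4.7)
The plan is to exhibit, for each claimed inequality $\minion{M} \leq \minion{N}$, an explicit minion homomorphism defined by a term-like formula over the operation symbols. For every inequality on the list the required map has one of three very simple shapes: the identity, a projection that drops a single coordinate, or a duplication $(h_1, \dots, h_j) \mapsto (h_{i_1}, \dots, h_{i_k})$ for a suitable sequence of indices. \Cref{lem:preserving-minors} guarantees that each such formula automatically preserves minors, so the entire proof reduces to checking, for each homomorphism, that the image tuple lies in the target minion. Each such verification is a short chain computation using only the properties of $\triangleleft$ listed after \Cref{def:dual}: that $\triangleleft$ implies $\leq$, skew symmetry, and strong transitivity.

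Concretely, the top tower is handled as follows: $\minAx{1} \leq \minT$ is trivial; each $\minBk \leq \minAk$ is the identity map, using that $\triangleleft$ implies $\leq$; each $\minAx{k+1} \leq \minBk$ is the projection $(h_1, \dots, h_{k+1}) \mapsto (h_1, \dots, h_k)$, whose image satisfies $h_k \triangleleft h_k^d$ by first applying strong transitivity to $h_k \triangleleft h_{k+1} \leq h_{k+1}^d$, then skew symmetry to obtain $h_{k+1} \triangleleft h_k^d$, and finally strong transitivity again with $h_k \leq h_{k+1}$; and $\minBinf \leq \minBk$ is the duplication $(h) \mapsto (h, h, \dots, h)$, whose image lies in $\minBk$ because $h \triangleleft h \triangleleft h^d$ in $\minBinf$. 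The C/D tower is analogous: each $\minDk \leq \minCk$ (for $k \geq 2$) is the identity, with the required $h_{k-1} \triangleleft h_{k-1}^d$ obtained from $h_{k-1} \triangleleft h_k = h_k^d$ by skew symmetry and strong transitivity; each $\minCx{k+1} \leq \minDk$ is the projection $(h_1, \dots, h_{k+1}) \mapsto (h_1, \dots, h_{k-1}, h_{k+1})$, using strong transitivity on $h_{k-1} \triangleleft h_k \leq h_{k+1}$; and $\minCinf \leq \minCk$ is $(h_1, h_2) \mapsto (h_1, h_1, \dots, h_1, h_2)$ with $k-1$ copies of $h_1$.

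The remaining cross-inequalities and infinite cases are equally simple: $\minCx{k+1} \leq \minBk$ is the projection dropping the last coordinate; $\minDk \leq \minAk$ is the identity; and $\minDinf \leq \minCinf \leq \minBinf$ is realised by $(h) \mapsto (h, h)$ and $(h_1, h_2) \mapsto (h_1)$ respectively. There is no substantial obstacle here: this is a bookkeeping exercise once one has written down the candidate formulas. The only mild subtlety is to keep track of when skew symmetry must be invoked to flip a $\triangleleft$-constraint across a dual.
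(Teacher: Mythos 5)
Your proposal is correct and uses exactly the same witnesses as the paper: the paper's proof simply points to Table~\ref{table:homomorphisms}, which lists precisely the inclusions, coordinate-dropping projections, and duplications you describe, with minor preservation justified by \Cref{lem:preserving-minors} and membership in the target minion checked via skew symmetry and strong transitivity as you do. No differences worth noting.
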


\begin{proof}
    The inequalities are witnessed by homomorphisms in \Cref{table:homomorphisms}. 
    To see that the image of a domain operation belongs to the codomain recall that every $\tuple{h} = (h_1, \dots, h_k)$ in any of the considered minions satisfies $h_i \triangleleft h_i^d$ for every $i \in [k-1]$. 
\end{proof}    
\begin{table}[ht] 
\centering
\begin{tabular}{ll}
$\minDinf \rightarrow \minCinf $ & $(h) \mapsto (h, h)$ \\
$\minCinf \rightarrow \minBinf $ & $(h_1, h_2) \mapsto (h_1)$ \\
$\minBinf \rightarrow \minBk$ & $(h) \mapsto (\underbrace{h, \dots, h}_{k\ \text{times}})$ \\
$\minBk\rightarrow \minAk$ & inclusion \\
$\minAx{k+1} \rightarrow \minBk$ & $(h_1, \dots, h_k, h_{k+1}) \mapsto (h_1, \dots, h_k)$ \\
$\minCinf \rightarrow \minCk$ & $(h_1, h_2) \mapsto (\underbrace{h_1, \dots, h_1}_{k-1\ \text{times}}, h_2)$ \\
$\minCx{k+1} \rightarrow \minDk$ & $(h_1, \dots, h_k, h_{k+1}) \mapsto (h_1, \dots, h_{k-1}, h_{k+1})$ \\
$\minDx{k} \rightarrow \minCx{k}$ & inclusion \\
$\minCx{k+1} \rightarrow \minBk$ & $(h_1, \dots, h_k, h_{k+1}) \mapsto (h_1, \dots, h_k)$ \\
$\minDk \rightarrow \minAk$ & inclusion  \\
\end{tabular}
\caption{Minion homomorphisms witnessing inequalities}
\label{table:homomorphisms}
\end{table}

The other two tasks will be approached simultaneously. 

The multisorted function minions that we consider are Boolean. \Cref{lem:binaries,lem:cores-binaries} then imply that any homomorphism $\xi$ between such minions is fully determined by $\xi^{(2)}$ and, moreover, $\minion{M}$ is a minion core whenever $\xi^{(2)}$ is a bijection $\minion{M}^{(2)} \to \minion{M}^{(2)}$ for every homomorphism $\xi:\minion{M}\to\minion{M}$. Binary operations and binary multisorted operations are therefore important for us. We simplify the notation and write
$$
x = \pi^2_1, \quad y = \pi^2_2,
\quad
\tuple{x} = \tuple{\pi}^2_1 = (x,x, 
\dots, x), \quad
\tuple{y} = \tuple{\pi}^2_2 = (y,y, \dots, y). 
$$
We also often shorten the notation for tuples and write, e.g., $xxxyy$ instead of $(x,x,x,y,y)$.

Recall that every multisorted operation $\tuple{h}=(h_1, \dots, h_k)$ in any of the considered minions satisfies
$$
h_1 \triangleleft \dots \triangleleft h_{k-1} \leq h_k \leq h_k^d \leq h_{k-1}^d \triangleleft \dots \triangleleft  h_1^d, \quad
(\forall i \in [k-1]) \ h_i \triangleleft h_i^d
$$
In particular, if $\tuple{h}$ is binary, then 
each $h_i$ is either $x$, $y$, or $\wedge$, never $\vee$ since $\vee \not\leq \vee^d = \wedge$. In fact, $\wedge \leq x,y \leq \vee$ (and $\wedge \leq \vee$) are all the inequalities that hold between the Boolean idempotent operations. It follows that every binary $\tuple{h}$ is of the form
\begin{align*}
 \wedge^ix^{k-i} &= (\underbrace{\wedge,  \dots, \wedge}_\text{$i$ times}, \underbrace{x,  \dots, x}_\text{$k-i$ times}) \qquad \text{ or }\\
 \wedge^iy^{k-i} &= (\underbrace{\wedge,  \dots, \wedge}_\text{$i$ times}, \underbrace{y,  \dots, y}_\text{$k-i$ times})
\end{align*}
for some $i \in \{0,1, \dots, k\}$.

Our minions are also idempotent, therefore every homomorphism $\xi$ between them satisfies $\xi^{(2)}(\tuple x)=\tuple x$ and $\xi^{(2)}(\tuple y)=\tuple y$ by \Cref{lem:cores-projections}. 

These considerations already imply the following result.

\begin{lemma}\label{lem:Dinf}
    Minion $\minDinf = \{(h) \in \idemp_1 \mid h \triangleleft h = h^d\}$ is a minion core.
\end{lemma}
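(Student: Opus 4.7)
My plan is to apply Lemma \ref{lem:cores-binaries} with $m = 2$, which is legitimate because every sort of $\minDinf$ has size $2$. Thus it will suffice to show that for every minion homomorphism $\xi \colon \minDinf \to \minDinf$, the binary part $\xi^{(2)} \colon \minDinf^{(2)} \to \minDinf^{(2)}$ is a bijection. The whole argument then reduces to computing $\minDinf^{(2)}$ and the action of $\xi^{(2)}$ on it.

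First I would determine $\minDinf^{(2)}$ explicitly. An element is a $1$-tuple $(h)$ where $h \colon \{0,1\}^2 \to \{0,1\}$ is idempotent, monotone (that is what $h \triangleleft h$ means), and self-dual ($h = h^d$). As the excerpt already observes, the only idempotent binary Boolean operations are $\pi^2_1 = x$, $\pi^2_2 = y$, $\wedge$, and $\vee$. Both projections are trivially monotone and self-dual. For the remaining two, $\wedge^d = \vee \neq \wedge$ and $\vee^d = \wedge \neq \vee$, so neither is self-dual. Hence $\minDinf^{(2)} = \{(x),(y)\}$.

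Next I would invoke Lemma \ref{lem:cores-projections}. Since $\minDinf$ is idempotent (every $h$ with $h = h^d$ is automatically idempotent when viewed as a member of $\idemp_1$, and in any case the definition restricts to $\idemp_1$), any minion homomorphism $\xi \colon \minDinf \to \minDinf$ fixes the projections: $\xi(\tuple{x}) = \tuple{x}$ and $\xi(\tuple{y}) = \tuple{y}$. Combined with the computation above, $\xi^{(2)}$ must be the identity map on the two-element set $\{(x),(y)\}$, and in particular a bijection. Lemma \ref{lem:cores-binaries} then concludes that $\minDinf$ is a minion core.

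There is essentially no obstacle here: the proof is a direct unwinding of the machinery built in Sections \ref{sec:cores} and \ref{subsec:homo}, and the key observation is simply that the self-duality constraint $h = h^d$ eliminates $\wedge$ and $\vee$, leaving only the two projections in the binary part. The lemma is really a ``sanity check'' illustrating how the core criterion will be applied to the more elaborate minions $\minBinf$, $\minCinf$, and $\minAk,\minBk,\minCk,\minDk$ later, where the binary parts will be larger and the analysis of $\xi^{(2)}$ correspondingly more delicate.
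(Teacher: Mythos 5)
Your proof is correct and follows exactly the paper's own argument: compute $\minDinf^{(2)} = \{(x),(y)\}$ by noting that self-duality rules out $\wedge$ and $\vee$, invoke \Cref{lem:cores-projections} to see that any endomorphism fixes the projections, and conclude with \Cref{lem:cores-binaries}. No differences worth noting.
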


\begin{proof}
The binary part of $\minDinf$ is $\minDinf^{(2)} = \{x,y\}$ since $\wedge$ does not satisfy $\wedge = \wedge^d$. By \Cref{lem:cores-projections}, every homomorphism $\xi: \minDinf \to \minDinf$ satisfies $\xi(x)=x$ and $\xi(y)=y$, therefore $\xi^{(2)}$ is the identity on $\minDinf^{(2)}$. By \Cref{lem:cores-binaries}, $\minDinf$ is a minion core.
\end{proof}

In the following result about $\minBinf$ we additionally use that minion homomorphisms map commutative operations to commutative operations.

\begin{lemma} \label{lem:Binf}
Minion $\minBinf = \{(h) \in \idemp_1 \mid h \triangleleft h \triangleleft h^d\}$ is a minion core. Moreover, $\minBinf \not\leq \minDx{1}$. 
\end{lemma}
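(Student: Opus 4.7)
The plan is to apply \Cref{lem:cores-binaries} by computing the binary parts of $\minBinf$ and $\minDx{1}$, and then to use that minion homomorphisms preserve height-1 identities, in particular commutativity.

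First I would classify the binary elements. The idempotent binary Boolean operations are $x$, $y$, $\wedge$, $\vee$, with duals $x^d = x$, $y^d = y$, $\wedge^d = \vee$, $\vee^d = \wedge$. Checking the defining conditions gives
\[
\minBinf^{(2)} = \{x, y, \wedge\}, \qquad \minDx{1}^{(2)} = \{x, y\}.
\]
For $\minBinf$, each of $x, y, \wedge$ is monotone (giving $h \triangleleft h$) and its dual dominates it in the $\triangleleft$ sense, while $\vee \triangleleft \vee^d = \wedge$ fails at $\tuple a = \tuple b = (1,0)$. For $\minDx{1}$, the self-duality condition $h = h^d$ leaves only the projections.

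To show $\minBinf$ is a minion core, I would take an arbitrary minion homomorphism $\xi \colon \minBinf \to \minBinf$. By \Cref{lem:cores-projections} (idempotency), $\xi(x) = x$ and $\xi(y) = y$. The operation $\wedge$ satisfies the height-1 identity $\wedge \circ (\pi^2_1, \pi^2_2) = \wedge \circ (\pi^2_2, \pi^2_1)$ (commutativity); applying $\xi$ to both sides shows that $\xi(\wedge)$ is commutative as well. Among the three elements of $\minBinf^{(2)}$, only $\wedge$ is commutative, so $\xi(\wedge) = \wedge$. Thus $\xi^{(2)}$ is the identity on $\minBinf^{(2)}$, hence a bijection, and $m = 2$ is an admissible choice in \Cref{lem:cores-binaries} (the sorts have size two), which delivers that $\minBinf$ is a minion core.

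The same commutativity obstruction gives $\minBinf \not\leq \minDx{1}$: any minion homomorphism $\xi \colon \minBinf \to \minDx{1}$ would have to send the commutative $\wedge$ to a commutative element of $\minDx{1}^{(2)} = \{x, y\}$, but neither projection is commutative. The only point requiring care is the computation of the two binary parts; once that is in hand, everything else is a direct application of the height-1 identity principle developed in \Cref{subsec:homo-function-minion}.
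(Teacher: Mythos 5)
Your proposal is correct and matches the paper's own proof essentially step for step: compute $\minBinf^{(2)} = \{x,y,\wedge\}$, use \Cref{lem:cores-projections} to pin down the projections, use the commutativity height-1 identity to force $\xi^{(2)}(\wedge)=\wedge$, conclude via \Cref{lem:cores-binaries}, and observe that $\minDx{1}^{(2)}$ has no commutative element. The extra verification of the binary parts that you include is a welcome sanity check but does not change the argument.
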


\begin{proof}
The binary part of $\minBinf$ is $\minBinf^{(2)} = \{x,y,\wedge\}$.
As $\wedge \circ xy= \wedge \circ yx$, every minion homomorphism  $\xi$ from $\minBinf$ to a multisorted Boolean minion satisfies $\xi^{(2)}(\wedge) \circ \tuple{xy} = \xi^{(2)}(\wedge) \circ \tuple{yx}$. The only such operation in $\minBinf^{(2)}$ is $\wedge$, therefore every minion homomorphism $\xi: \minBinf \to \minBinf$ satisfies $\xi^{(2)}(\wedge)=\wedge$, and also $\xi^{(2)}(x) = x$ and $\xi^{(2)}(y)=y$  by \Cref{lem:cores-projections}, so $\minBinf$ is a minion core by \Cref{lem:cores-binaries}. 
Moreover, $\minDx{1} = \{(h) \in \idemp_1 \mid h = h^d \}$ does not contain such an operation, so there is no minion homomorphism $\minBinf \to \minDx{1}$.
\end{proof}

Homomorphism from the remaining minions will be studied using 5-ary symmetric operations, that is, operations whose result does not change when the arguments are permuted. Note that a $k$-sorted Boolean 5-ary operation $\tuple{h}$ is symmetric if, and only if,
$$
\tuple{h} \circ (\tuple{\pi}^5_1, \tuple{\pi}^5_2, \tuple{\pi}^5_3, \tuple{\pi}^5_4, \tuple{\pi}^5_5) =
\tuple{h} \circ (\tuple{\pi}^5_{\sigma(1)}, \tuple{\pi}^5_{\sigma(2)}, \tuple{\pi}^5_{\sigma(3)}, \tuple{\pi}^5_{\sigma(4)}, \tuple{\pi}^5_{\sigma(5)}), 
$$
for every permutation $\sigma$  on $[5]$. It follows that every minion homomorphism maps 5-ary symmetric multisorted operations to 5-ary symmetric multisorted operations.

The following observation makes it explicit how relations involving $\leq$, $\triangleleft$, and ${}^d$ are decided for such operations. We omit the proof.

\begin{lemma} \label{lem:symmetric-5ary}
  Let $h$ and $h'$ be 5-ary symmetric Boolean idempotent operations and denote $(a_1,a_2,a_3,a_4) = (h(10000),h(11000),h(11100),h(11110))$ and similarly $(a_1',a_2',a_3',a_4')$ (note that these tuples determine the operations by symmetry and idempotency).

 \begin{center}
\begin{tabular}{l|l}
  & $h$  \\ \hline
10000 & $a_1$  \\
11000 & $a_2$  \\
11100 & $a_3$  \\
11110 & $a_4$  \\  
\end{tabular}
\qquad
\begin{tabular}{l|l}
  & $h'$  \\ \hline
10000 & $a'_1$  \\
11000 & $a'_2$  \\
11100 & $a'_3$  \\
11110 & $a'_4$  \\  
\end{tabular}
\end{center}

  \begin{itemize}
      \item $h \leq h'$ iff $a_i \leq a'_i$ for all $i \in [4]$.
      \item $h \triangleleft h'$ iff $a_i \leq a'_j$ for all $i \leq j$, $i,j \in [4]$. 
      \item $h \leq h^d$ iff $(a_1,a_4) \neq (1,1)$ and $(a_2,a_3) \neq (1,1)$.
      \item $h \triangleleft h^d$ iff $a_1=a_2=0$.
      \item $h = h^d$ iff $a_1 = \overline{a_4}$ and $a_2 = \overline{a_3}$.
  \end{itemize}
\end{lemma}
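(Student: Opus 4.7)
The plan is to first observe that a $5$-ary symmetric idempotent Boolean operation $h$ is completely determined by the six values $h(\tuple{a})$ taken over the Hamming weights $0, 1, 2, 3, 4, 5$. By idempotency the extremal weights give $h(00000) = 0$ and $h(11111) = 1$, so the four values $a_1 = h(10000)$, $a_2 = h(11000)$, $a_3 = h(11100)$, $a_4 = h(11110)$ fully determine $h$, and analogously for $h'$. Every condition in the lemma is a universal statement over inputs (and possibly pairs of inputs), so by symmetry it suffices to consider one representative per weight class, reducing each claim to a combinatorial statement about $(a_1, \dots, a_4)$ and $(a_1', \dots, a_4')$.

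For the first two items I would simply spell out this reduction. The pointwise inequality $h \leq h'$ is equivalent to $a_i \leq a_i'$ on every weight class, with the boundary weights $0$ and $5$ handled automatically by idempotency. For $h \triangleleft h'$, the condition is $h(\tuple a) \leq h'(\tuple b)$ whenever $\tuple a \leq \tuple b$; since a tuple of weight $i$ lies below some tuple of weight $j$ precisely when $i \leq j$ (and for any such pair we can find witnesses), the condition reduces to $a_i \leq a_j'$ for all $i \leq j$ in $[4]$, the boundary cases again being automatic.

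For the three dual-involving items I would use that $\tuple{a}$ has weight $i$ if and only if $\overline{\tuple a}$ has weight $5 - i$, so $h^d$ on weight-$j$ inputs equals $1 - h$ on weight-$(5-j)$ inputs. Then $h \leq h^d$ becomes the condition $a_i + a_{5-i} \leq 1$ for $i \in [4]$, which is exactly forbidding $(a_1, a_4) = (1,1)$ and $(a_2, a_3) = (1,1)$. The equality $h = h^d$ similarly reduces to $a_i = 1 - a_{5-i}$, and the cases $i = 1, 2$ give the two stated conditions (the other two being redundant). For $h \triangleleft h^d$ the condition is $a_i + a_{5-j} \leq 1$ for all $1 \leq i \leq j \leq 4$. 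The diagonal choices $i = 1, j = 4$ and $i = 2, j = 3$ force $a_1 = 0$ and $a_2 = 0$ respectively; conversely, for any $1 \leq i \leq j \leq 4$ we have either $i \leq 2$ (hence $a_i = 0$) or $j \geq 3$ (hence $5 - j \leq 2$ and $a_{5-j} = 0$), so the condition holds and no constraint is imposed on $a_3, a_4$.

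There is no real obstacle here: once the weight-class reduction is made explicit, each of the five items is a short finite arithmetic check, which is presumably why the authors chose to omit the proof.
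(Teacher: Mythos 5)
Your proof is correct. The paper explicitly omits the proof of this lemma ("They are all easy to verify and we omit the proof"), and your argument — reducing via symmetry and idempotency to weight classes, using $h^d$ on weight $j$ equals $1-h$ on weight $5-j$, and checking the boundary weights $0$ and $5$ are automatic — is exactly the routine verification the authors had in mind.
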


The last minion with infinite subscript is dealt with in the following lemma. A similar proof strategy will be used for the remaining cases.

\begin{lemma}\label{lem:Cinf}
    Minion $\minCinf = \{ (h_1, h_2) \in \idemp_2 \mid h_1 \triangleleft h_1 \triangleleft h_2 = h_2^d\}$ is a minion core. Moreover, $\minCinf \not\leq \minDinf$.
\end{lemma}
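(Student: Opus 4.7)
The plan is to apply \Cref{lem:cores-binaries} with $m = 2$, which reduces proving that $\minCinf$ is a minion core to showing that the binary part of every endomorphism $\xi : \minCinf \to \minCinf$ is a bijection. First I would compute $\minCinf^{(2)}$: a binary idempotent self-dual Boolean operation must lie in $\{x, y\}$ (since $\wedge^d = \vee$), and once $h_2$ is fixed the condition $h_1 \triangleleft h_2$ pins $h_1$ down to $\{h_2, \wedge\}$. Thus $\minCinf^{(2)} = \{(x, x), (y, y), (\wedge, x), (\wedge, y)\}$. By \Cref{lem:cores-projections} the projections $(x, x)$ and $(y, y)$ are fixed by $\xi$, and the transposition minor $\alpha : [2] \to [2]$ with $\alpha(1) = 2, \alpha(2) = 1$ satisfies $(\wedge, x)^{(\alpha)} = (\wedge, y)$, so $\xi((\wedge, y)) = \xi((\wedge, x))^{(\alpha)}$. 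Bijectivity of $\xi^{(2)}$ therefore reduces to ruling out $\xi((\wedge, x)) \in \{(x, x), (y, y)\}$.

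For this I would enumerate the symmetric $5$-ary members of $\minCinf$ using \Cref{lem:symmetric-5ary}. Writing $h_1$ via its tuple $(a_1, a_2, a_3, a_4)$ and $h_2$ via $(b_1, b_2, b_3, b_4)$, the constraints (monotonicity of $h_1$, self-duality of $h_2$, and $a_i \leq b_j$ whenever $i \leq j$) leave precisely seven operations. A direct computation of their $(1, 1, 1, 2, 2)$-minors shows that none yields $(y, y)$, and only $(\mathrm{maj}_5, \mathrm{maj}_5)$ yields $(x, x)$. Now if $\xi((\wedge, x)) = (x, x)$, then $\xi((\wedge, y)) = (y, y)$ by the swap relation, and applying $\xi$ to the symmetric operation in $\minCinf^{(5)}$ with $h_1 = \wedge_5$ and $h_2$ given by $(b_1, b_2, b_3, b_4) = (0, 1, 0, 1)$ (whose $(1, 1, 1, 2, 2)$-minor is $(\wedge, y)$) produces a symmetric member of $\minCinf^{(5)}$ whose $(1, 1, 1, 2, 2)$-minor is $(y, y)$, contradicting the enumeration. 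The same argument applied to $(\wedge_5, \mathrm{maj}_5)$ in place of the above rules out $\xi((\wedge, x)) = (y, y)$, so $\xi^{(2)}$ is a bijection and $\minCinf$ is a minion core.

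For the inequality $\minCinf \not\leq \minDinf$, I would use that the only symmetric member of $\minDinf^{(5)}$ is $(\mathrm{maj}_5)$; indeed, monotonicity plus self-duality applied to a symmetric $5$-ary tuple $(a_1, a_2, a_3, a_4)$ force $a_1 = 0, a_4 = 1$ and then $a_2 = 0, a_3 = 1$. A hypothetical $\xi : \minCinf \to \minDinf$ would then send both $(\wedge_5, \mathrm{maj}_5)$ and $(\wedge_5, h_2)$ with $h_2$ given by $(0, 1, 0, 1)$ to $(\mathrm{maj}_5)$. Taking $(1, 1, 1, 2, 2)$-minors forces $\xi((\wedge, x)) = (x)$ from the first equation and $\xi((\wedge, y)) = (x)$ from the second, but the transposition minor independently gives $\xi((\wedge, y)) = \xi((\wedge, x))^{(\alpha)} = (y)$, yielding the contradiction $(x) = (y)$ in $\minDinf^{(2)}$.

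The main obstacle I anticipate is the enumeration of the symmetric $5$-ary members of $\minCinf^{(5)}$ and the bookkeeping of their $(1, 1, 1, 2, 2)$-minors; each verification is routine via \Cref{lem:symmetric-5ary}, but a small table helps to keep track.
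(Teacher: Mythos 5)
Your proof is correct and rests on the same pillars as the paper's: apply \Cref{lem:cores-binaries} with $m=2$, describe $\minCinf^{(2)}=\{(x,x),(y,y),(\wedge,x),(\wedge,y)\}$ explicitly, use \Cref{lem:cores-projections} together with the transposition minor $(\wedge,x)^{(\alpha)}=(\wedge,y)$ to reduce everything to the value of $\xi^{(2)}$ on $(\wedge,x)$, and then analyze symmetric $5$-ary members of $\minCinf$ via \Cref{lem:symmetric-5ary}. The only substantive difference is how the final analysis is organized. The paper introduces two explicit operations $\tuple{t}$ and $\tuple{t}'$ characterized as the only symmetric $5$-ary members of $\minCinf$ satisfying the height-one identity $\tuple{h}\xxxxy=\tuple{h}\xxyyy$, and then constrains $\xi$ by applying it to $\tuple{t}$. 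You instead enumerate all seven symmetric $5$-ary members and read off their $\xxxyy$-minors, observing that $(y,y)$ never occurs; that suffices to rule out $\xi^{(2)}((\wedge,x))\in\{(x,x),(y,y)\}$. Both verifications are correct (I confirmed the count of seven and the table of $\xxxyy$-minors). The paper's identity-based formulation is a bit cleaner and, more importantly, is the version that scales: it is essentially the two-sorted instance of the (Chain)/(CD) machinery used in \Cref{lem:chain-membership}, \Cref{lem:chain-equations}, \Cref{lem:chain-forcing}, and \Cref{lem:ABCDk} for the general $\minCk$ and $\minDk$. Your enumeration is a self-contained but more case-heavy argument that would be harder to push to arbitrary $k$. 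For the non-inequality $\minCinf\not\leq\minDinf$ your reasoning is also valid, though slightly more elaborate than the paper's one-line observation that $\minDinf$ contains no symmetric $5$-ary $(h)$ with $h(xxxxy)=h(xxyyy)$; in fact, once you know the unique symmetric $5$-ary member of $\minDinf$ is $(\mathrm{maj}_5)$, applying $\xi$ to the paper's operation $\tuple{t}$ and taking $\xxxxy$- and $\xxyyy$-minors already yields $x=y$, so the transposition-minor step in your argument is not strictly needed.
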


\begin{proof}
    Let $\tuple t$ and $\tuple{t}'$ be the following 2-sorted 5-ary symmetric Boolean operations.
 \begin{center}
\begin{tabular}{l|ll}
$\tuple t$  & $t_1$ & $t_2$  \\ \hline
10000 & 0    & 0      \\
11000 & 0    & 1       \\
11100 & 0    & 0        \\
11110 & 0    & 1   \\  
\end{tabular}
\qquad
\begin{tabular}{l|ll}
$\tuple t'$  & $t'_1$ & $t'_2$  \\ \hline
10000 & 0    & 1      \\
11000 & 0    & 0       \\
11100 & 0    & 1        \\
11110 & 0    & 0   \\  
\end{tabular}
\end{center}
Observe that both of them are in $\minCinf$ and they satisfy $\tuple{t}\xxxxy = \tuple{t} \xxyyy$ and $\tuple{t}'\xxxxy = \tuple{t}'\xxyyy$ (which exactly means that the 10000 row is equal to the 11100 row and the 11000 row is equal to the 11110 row).

We claim that, conversely, if $\tuple{h} \in \minCinf^{(5)}$ is symmetric and satisfies $\tuple{h}\xxxxy = \tuple{h}\xxyyy$, then necessarily $\tuple{h}=\tuple{t}$ or $\tuple{h}=\tuple{t}'$. Indeed, $h_1 \triangleleft h_1^d$ implies $h_1(10000)=h_1(11000)=0$. Then $h_1(xxxxy) = h_1(xxyyy)$ and symmetry imply that $h_1(11110)$ and $h_1(11100)=h_1(00111)$ are both 0 as well. Moreover, $h_2(10000)=h_2(11100)$ and $h_2(11000)=h_2(11110)$  by  $h_2(xxxxy) = h_2(xxyyy)$ (plus symmetry), and $h_2(10000)=\overline{h_2(11110)}$ and $h_2(11000) = \overline{h_2(11100)}$ by $h_2=h_2^d$, which gives us $\tuple{h}=\tuple{t}$ or $\tuple{h}=\tuple{t}'$, as required.
    
In order to apply \Cref{lem:cores-binaries} to prove that $\minCinf$ is a minion core, we first note that     
$\minCinf^{(2)} = \{xx, yy, \wedge x, \wedge y\}.$
Consider a minion homomorphism $\xi: \minCinf \to \minCinf$. 
By~\Cref{lem:cores-projections}, we know that $\xi^{(2)}(xx)=xx$ and $\xi^{(2)}(yy) = yy$. 
We also know that $\tuple{h} = \xi(\tuple{t})$ is symmetric and $\tuple{h}\xxxxy = \tuple{h}\xxyyy$, therefore $\tuple{h}$ is either $\tuple{t}$ or $\tuple{t}'$. 
In the first case, we obtain $\xi^{(2)}(\wedge x) = \xi^{(2)}( \tuple{t} \circ \tuple{xxxxy}) = \tuple{h} \circ \tuple{xxxxy} = \wedge x$ and $\xi^{(2)}(\wedge y) = \tuple{h} \circ \tuple{yyyyx} = \wedge y$. In the second case, we similarly obtain $\xi^{(2)}(\wedge x)= \wedge y$ and $\xi^{(2)}(\wedge y) = \wedge x$. In both cases, we see that $\xi^{(2)}$ is a bijection, therefore $\minCinf$ is a minion core by \Cref{lem:cores-binaries}.

For the second claim, we just observe that $\minDinf = \{(h) \mid h \triangleleft h = h^d\}$ does not contain any symmetric 5-ary $(h)$ satisfying $h(xxxxy) = h(xxyyy)$.
\end{proof}

It remains to deal with $\minAk$, $\minBk$, $\minCk$, and $\minDk$.
We fix $k$ and define $k$-sorted 5-ary symmetric idempotent Boolean operations for $i \in [k]$ as follows. It could be helpful to also look at \Cref{fig:tables}.
\begin{align*}
\tuple t^{i} \xxxxy &= \wedge^ix^{k-i}, \quad \tuple{t}^i \xxxyy = \wedge^{i-1}x^{k-i+1} \\
\tuple u \xxxxy &= \wedge^{k-1}y^1, \quad \tuple u \xxxyy = \wedge^{k-2}x^2  \\
\tuple v \xxxxy &= \wedge^{k-1}y^1, \quad \tuple v \xxxyy = \wedge^{k-1}x^1 \\
\tuple w \xxxxy &= \wedge^{k-1}x^1, \quad \tuple w \xxxyy = \wedge^{k-1}y^1
\end{align*}

\begin{figure}
\begin{center}
\begin{tabular}{l|lllll}
$\tuple t^{1}$  & $t^1_1$ & $t^1_2$ & $t^1_3$  & \dots    & $t^1_k$ \\ \hline
10000 & 0    & 0    & 0 & \dots &  0    \\
11000 & 0    & 0    & 0 & \dots &  0   \\
11100 & 1    & 1    & 1 & \dots &  1     \\
11110 & 0    & 1    & 1 & \dots &  1  
\end{tabular}
\qquad
\begin{tabular}{l|llllll}
$\tuple t^{2}$  & $t^2_1$ & $t^2_2$ & $t^2_3$  & \dots  & $t^2_k$ \\ \hline
 & 0    & 0    & 0 & \dots  &  0    \\
 & 0    & 0    & 0 & \dots  &  0   \\
 & 0    & 1    & 1 & \dots &  1     \\
 & 0    & 0    & 1 & \dots &  1  
\end{tabular}
\end{center}

\begin{center}
\begin{tabular}{l|ccccc}
$\tuple t^{k-1}$  & $t^{k-1}_1$ &  &  & $t^{k-1}_{k-1}\!\!$ & $t^{k-1}_k$ \\ \hline
10000 & 0    & \dots &  0   & 0          & 0    \\
11000 & 0    & \dots &  0   & 0          & 0   \\
11100 & 0    & \dots &  0  & 1          & 1     \\
11110 & 0    & \dots &  0  & 0          & 1  
\end{tabular}
\qquad
\begin{tabular}{l|ccccc}
$\tuple u$  & $u_1$ &  &  & $u_{k-1}$ & $u_k$ \\ \hline
 & 0    & \dots &  0   & 0          & 1    \\
 & 0    & \dots &  0   & 0          & 0   \\
 & 0    & \dots &  0  & 1          & 1     \\
 & 0    & \dots &  0  & 0          & 0  
\end{tabular}
\end{center}

\begin{center}
\begin{tabular}{l|ccccc}
$\tuple t^{k}$  & $t^k_1$ & \dots &  & $t^k_k$ \\ \hline
10000 & 0  & \dots &  0  & 0    \\
11000 & 0  & \dots &  0  & 0   \\
11100 & 0  & \dots &  0  & 1     \\
11110 & 0  & \dots &  0  & 0   
\end{tabular}
\qquad
\begin{tabular}{l|cccc}
$\tuple v$   & \dots &  & $v_k$ \\ \hline
 & \dots &  0  & 1    \\
 & \dots &  0  & 0   \\
 & \dots &  0  & 1     \\
 & \dots &  0  & 0   
\end{tabular}
\qquad
\begin{tabular}{l|cccc}
$\tuple w$   & \dots &  & $w_k$ \\ \hline
 & \dots &  0  & 0    \\
 & \dots &  0  & 1   \\
 & \dots &  0  & 0     \\
 & \dots &  0  & 1   
\end{tabular}
\end{center}
\caption{Operations $\tuple{t}^i$, $\tuple{u}$, $\tuple{v}$, and $\tuple{w}$.}
\label{fig:tables}
\end{figure}

These operations will play a similar role to operations $\tuple{t}$ and $\tuple{t}'$ in the proof of \Cref{lem:Cinf}.

For convenience, we recall the definitions of the minions we consider. Note in particular that $\minBk \subseteq \minAk$ and $\minDk \subseteq \minCk$.
    \begin{align*}
        \minAk \qquad \qquad &h_1 \triangleleft h_2  \triangleleft \dots \triangleleft h_k \leq h_k^d \\ 
        \minBk \qquad \qquad &h_1 \triangleleft h_2  \triangleleft \dots \triangleleft h_k \triangleleft h_k^d \\
        \minCk \qquad \qquad &h_1 \triangleleft h_2 \triangleleft \dots \triangleleft h_{k-1} \leq h_{k} = h_{k}^d, \ h_{k-1} \triangleleft h_{k-1}^d  \\
        \minDk \qquad\qquad &h_1 \triangleleft h_2  \triangleleft \dots \triangleleft h_k = h_k^d 
   \end{align*}
\begin{lemma}   \label{lem:chain-membership}
    The operations $\tuple{t}^1$, \dots, $\tuple{t}^{k-1}$ are all in $\minBk$ and $\minDk$ (and thus in $\minAk$ and $\minCk$),  $\tuple{t}^{k} \in \minBk$, $\tuple{v}, \tuple{w} \in \minDk$,  and $\tuple{u} \in \minCk$, but $\tuple{v}, \tuple{w} \not\in \minBk$ for any $k$ and $\tuple{u} \not\in \minAk, \minDk$ for $k \geq 2$.
\end{lemma}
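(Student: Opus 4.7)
The plan is to reduce every claim to the elementary $5$-ary criterion of \Cref{lem:symmetric-5ary}. Every component of each listed operation is $5$-ary symmetric and idempotent, hence fully determined by its $4$-entry signature $(a_1,a_2,a_3,a_4)$. I would first read these signatures off \Cref{fig:tables}, or re-derive them from the defining equations $\tuple{t}^i \xxxxy = \wedge^i x^{k-i}$, $\tuple{t}^i \xxxyy = \wedge^{i-1} x^{k-i+1}$ (and the analogous formulas for $\tuple{u},\tuple{v},\tuple{w}$), using symmetry and idempotency to fill in sums $0$ and $5$. Only five signature types appear: the $5$-ary AND with signature $(0,0,0,0)$; the $5$-ary majority $(0,0,1,1)$; a ``weird'' $(0,0,1,0)$ occurring as every $t^i_i$ and as $u_{k-1}$; the $(1,0,1,0)$ shared by $u_k$ and $v_k$; and the $(0,1,0,1)$ of $w_k$.

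Next I would record the relevant duals via $(a^d)_i = 1 - a_{5-i}$. This shows that majority, $(1,0,1,0)$, and $(0,1,0,1)$ are self-dual, the weird $(0,0,1,0)$ dualises to $(1,0,1,1)$, and AND dualises to the all-ones signature. With these tables in hand, each clause in the target membership conditions unravels into pointwise comparisons: $h \triangleleft h'$ becomes ``$a_l \leq a'_m$ for all $l \leq m$''; $h \leq h'$ is coordinatewise; and $h = h^d$, $h \triangleleft h^d$ are read off directly.

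For the positive claims, the chain $t^i_1 \triangleleft \cdots \triangleleft t^i_k$ decomposes into four easy pairwise comparisons: AND $\triangleleft$ AND, AND $\triangleleft$ weird, weird $\triangleleft$ majority, and majority $\triangleleft$ majority. Self-duality of majority then gives $\tuple{t}^i \in \minDk$ for $i \leq k-1$, and monotonicity $(0,0,1,1) \triangleleft (0,0,1,1)$ additionally yields $\tuple{t}^i \in \minBk$. The case $\tuple{t}^k \in \minBk$ reduces to the single extra inequality $(0,0,1,0) \triangleleft (1,0,1,1)$, which is immediate. For $\tuple{u} \in \minCk$ the nontrivial inputs are $u_{k-1} \triangleleft u_{k-1}^d$, i.e.\ $(0,0,1,0) \triangleleft (1,0,1,1)$, together with $u_{k-1} \leq u_k$ and the self-duality of $u_k$; for $\tuple{v}, \tuple{w} \in \minDk$ the only new ingredient is self-duality of $v_k$ and $w_k$.

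The negative claims are equally immediate from the signatures. For $\tuple{u} \notin \minAk$ (and hence $\notin \minDk$), the failure is $u_{k-1} \triangleleft u_k$: taking $l=3$, $m=4$ one would need $a_3(u_{k-1}) = 1 \leq a_4(u_k) = 0$. For $\tuple{v}, \tuple{w} \notin \minBk$, the required $h_k \triangleleft h_k^d$ collapses by self-duality to $h_k \triangleleft h_k$, which reduces to monotonicity of the signature; neither $(1,0,1,0)$ nor $(0,1,0,1)$ is monotone. The only real obstacle in formalising this is bookkeeping around boundary cases such as $i=1$ or $k=2$, where one must avoid referring to nonexistent entries of the chain.
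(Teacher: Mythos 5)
Your proposal is correct and follows essentially the same route as the paper: the paper's proof simply invokes Lemma~\ref{lem:symmetric-5ary} as "straightforward" and records the two non-membership witnesses ($u_{k-1}(11100)=1$ versus $u_k(11110)=0$, and $v_k(10000)=1$, resp.\ $w_k(11000)=1$), which match your signature computations exactly. The only cosmetic difference is that for $\tuple{v},\tuple{w}\notin\minBk$ you argue via self-duality and non-monotonicity of the signature rather than directly via the criterion $h\triangleleft h^d \iff a_1=a_2=0$; these are equivalent.
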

\begin{proof}
    The proof is straightforward using \Cref{lem:symmetric-5ary}; the  non-membership  for $\tuple{u}$ is because $u_{k-1}(11100)=1$ but $u_k(11110)=0$, so $u_{k-1} \ntriangleleft u_k$. The non-membership for $\tuple{v}$ is because $v_k(10000)=1$, so $\tuple{v} \ntriangleleft \tuple{v}^d$, and similarly for $\tuple{w}$. 
\end{proof}

The equations we use are as follows.

\begin{itemize}
    \item[(Chain)] \label{id:chain} $\tuple h^1 \xxxyy = \tuple{x}$,
        $\tuple h^{i+1} \xxxyy = \tuple h^i \xxxxy \quad \forall i\in[k-1]$
    \item[(AB)] \label{id:ab} $\tuple h^k \xxxxy =\tuple h^k \yyyyx$ 
    \item[(CD)] \label{id:cd} $\tuple h^k \xxxxy =\tuple h^k \xxyyy$
\end{itemize}

\begin{lemma} \  \label{lem:chain-equations}

\begin{itemize}    
\item The operations $(\tuple{h}^1, \dots, \tuple h^k) =(\tuple{t}^1, \dots, \tuple{t}^{k-2}, \tuple{t}^{k-1}, \tuple{t}^k)$ satisfy (Chain) and (AB).

\item The operations $(\tuple{h}^1, \dots, \tuple h^k) = (\tuple{t}^1, \dots, \tuple{t}^{k-2}, \tuple{t}^{k-1}, \tuple{v})$ and  $(\tuple{h}^1, \dots, \tuple h^k) =(\tuple{t}^1, \dots, \tuple{t}^{k-2}, \tuple{u}, \tuple{w})$ both satisfy (Chain) and (CD).
\end{itemize}
\end{lemma}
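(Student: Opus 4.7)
The plan is a direct verification based on the fact that all of $\tuple{t}^i, \tuple{u}, \tuple{v}, \tuple{w}$ are $5$-ary, symmetric, and idempotent, so each is determined in each sort $j \in [k]$ by four bits, which I will call $(a_1, a_2, a_3, a_4)(\tuple h_j)$: the values of the $j$th component at $10000, 11000, 11100, 11110$. Idempotency forces the value $0$ at $00000$ and $1$ at $11111$, and symmetry collapses every other input to one of these six.

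The first step is to reduce each equation of the lemma to a finite set of identities on these bits. For any symmetric idempotent $\tuple h$ of arity $5$, a direct calculation of the four minors at $(0,0), (0,1), (1,0), (1,1)$ yields, componentwise,
\[
(\tuple h \xxxxy)_j = (0, a_1, a_4, 1), \quad (\tuple h \xxxyy)_j = (0, a_2, a_3, 1),
\]
\[
(\tuple h \xxyyy)_j = (0, a_3, a_2, 1), \quad (\tuple h \yyyyx)_j = (0, a_4, a_1, 1).
\]
Hence the equation (Chain) translates, sort by sort, to $a_2(\tuple h^{i+1}_j) = a_1(\tuple h^i_j)$ and $a_3(\tuple h^{i+1}_j) = a_4(\tuple h^i_j)$ (together with $\tuple h^1 \xxxyy = \tuple x$, i.e.\ $a_2(\tuple h^1_j) = 0$ and $a_3(\tuple h^1_j) = 1$); equation (AB) becomes $a_1(\tuple h^k_j) = a_4(\tuple h^k_j)$; and (CD) becomes $a_1(\tuple h^k_j) = a_3(\tuple h^k_j)$ and $a_2(\tuple h^k_j) = a_4(\tuple h^k_j)$.

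The second step is to read the bit-quadruple $(a_1, a_2, a_3, a_4)$ per sort from \Cref{fig:tables}. The result is a compact table: for $\tuple{t}^i$ one has $(0,0,0,0)$ when $j < i$, $(0,0,1,0)$ when $j = i$, and $(0,0,1,1)$ when $j > i$; for $\tuple{u}$, $(0,0,0,0)$ when $j \leq k{-}2$, $(0,0,1,0)$ when $j = k{-}1$, and $(1,0,1,0)$ when $j = k$; for $\tuple{v}$, $(0,0,0,0)$ when $j < k$ and $(1,0,1,0)$ when $j = k$; and for $\tuple{w}$, $(0,0,0,0)$ when $j < k$ and $(0,1,0,1)$ when $j = k$. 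With these tabulated, the verification of each of the three parts of the lemma is mechanical: one splits on how $j$ compares with $i, i+1, k{-}1, k$, and checks a handful of bit equalities. For example, the critical last link of (Chain) in the third case $(\tuple{t}^1, \dots, \tuple{t}^{k-2}, \tuple u, \tuple w)$ requires $a_2(\tuple w_k) = a_1(\tuple u_k)$ and $a_3(\tuple w_k) = a_4(\tuple u_k)$, both of which read off as $1 = 1$ and $0 = 0$.

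I do not anticipate a genuine obstacle. The only point that demands care is the boundary between the "interior" sorts $j \le k{-}2$ and the "tail" sorts $j \in \{k{-}1, k\}$ in the cases involving $\tuple u, \tuple v, \tuple w$: it is precisely the asymmetric pairs $(a_1, a_4) = (1, 0)$ for $\tuple v$ and $(a_2, a_3) = (1, 0)$ for $\tuple w$ that deliver equation (CD) in the last two cases, while $\tuple t^k$, having $(0, 0, 1, 0)$ on sort $k$, delivers (AB) in the first case via $a_1 = 0 = a_4$.
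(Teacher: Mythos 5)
Your proof is correct and takes the same route as the paper, which merely records that the claim is an immediate consequence of the definitions; you have simply unpacked the compact $\wedge^i x^{k-i}$-style definitions into the four-bit-per-sort representation of \Cref{fig:tables} and checked the resulting bit equalities, all of which I verified agree with the tables. This is a fine and fully explicit way to discharge the verification, but it is not a different method.
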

\begin{proof}
    This is an immediate consequence of the definition of these operations.
\end{proof}

The crucial lemma is the following partial converse to the previous two lemmas.

\begin{lemma} \label{lem:chain-forcing}
  Let $\tuple{h}^1$, \dots, $\tuple{h}^k$ be $k$-sorted operations, all in $\minAk$ or all in $\minCk$. 
  \begin{itemize}
      \item If the operations satisfy (Chain) and (AB), and they are all in $\minAk$, then 
      $(\tuple{h}^1, \dots, \tuple h^k)=(\tuple{t}^1, \dots, \tuple{t}^k)$.
      \item If the operations satisfy (Chain) and (CD), then $(\tuple{h}^1, \dots, \tuple h^k)$ is equal to $(\tuple{t}^1, \dots, \tuple{t}^{k-2}, \tuple{t}^{k-1}, \tuple{v})$ or  $(\tuple{t}^1, \dots, \tuple{t}^{k-2}, \tuple{u}, \tuple{w})$.
  \end{itemize}
\end{lemma}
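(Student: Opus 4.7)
My plan is to track each $\tuple h^i$ only on the four probe tuples $(1,1,1,0,0)$, $(1,1,1,1,0)$, $(0,0,0,1,1)$, $(0,0,0,0,1)$; in the 5-ary symmetric context in which this lemma is used (the $\tuple h^i$ arise as images of the symmetric $\tuple t^i$ under a minion homomorphism and so are themselves 5-ary symmetric), these four values determine the operation by \Cref{lem:symmetric-5ary}. Write $A^i_j,B^i_j,C^i_j,D^i_j$ for the values of $h^i_j$ at those four tuples. (Chain) then reads $A^1_j=1$, $C^1_j=0$, and $A^{i+1}_j=B^i_j$, $C^{i+1}_j=D^i_j$ for $1\le i\le k-1$; (AB) reads $B^k_j=D^k_j$; and (CD) reads $B^k_j=C^k_j$ together with $A^k_j=D^k_j$. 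Membership in $\minAk$ or $\minCk$ translates through the chain $h^i_1\triangleleft\dots\triangleleft h^i_{k-1}$ into pointwise inequalities between these diagonal values across sorts, while the top constraint ($h^i_k\le (h^i_k)^d$ for $\minAk$, or $h^i_k=(h^i_k)^d$ plus $h^i_{k-1}\triangleleft (h^i_{k-1})^d$ for $\minCk$) provides the remaining arithmetic relations, each again readable off \Cref{lem:symmetric-5ary}.

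A short forward induction on $i$ yields $A^i_j=1$ whenever $j\ge i$ (restricted to $i\le k-1$ in the $\minCk$ case). Indeed, from $A^i_i=1$ and $h^i_i\triangleleft h^i_{i+1}$ one gets $B^i_{i+1}\ge A^i_i=1$, whence (Chain) pushes this to $A^{i+1}_{i+1}=1$; in the $\minCk$ case at $i+1=k$ the pointwise inequality $h^i_{k-1}\le h^i_k$ lifts $B^i_{k-1}=1$ to $B^i_k=1$. I then resolve level $k$. In the (AB) case, $B^k_k+D^k_k\le 1$ together with $B^k_k=D^k_k$ forces both to $0$; the $\triangleleft$-chain spreads this to $B^k_j=D^k_j=0$ for all $j$; the inequality $A^k_j\le B^k_k=0$ drops $A^k_j$ to $0$ for $j<k$, forward induction leaves $A^k_k=1$, and $h^k_j\triangleleft (h^k_k)^d$ instantiated at $(0,0,0,1,1)$ forces $C^k_j=0$, so $\tuple h^k$ matches $\tuple t^k$. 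In the (CD) case, self-duality $A^k_k+C^k_k=1$ combined with (CD) yields $A^k_k+B^k_k=1$, splitting into a branch with $A^k_k=1$ (the $\tuple v$ branch) and a branch with $A^k_k=0$ (the $\tuple w$ branch).

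Finally, (Chain) back-propagates downward: $B^{i-1}_j=A^i_j$ and $D^{i-1}_j=C^i_j$ transfer the level-$i$ data to level $i-1$, after which the remaining values $A^{i-1}_j,C^{i-1}_j$ are recovered by combining forward induction (for $j\ge i-1$) with $A^{i-1}_j\le B^{i-1}_{i-1}=0$ (for $j<i-1$) and with $C^{i-1}_j+A^{i-1}_k\le 1$ (using $A^{i-1}_k=1$). The main obstacle is in the (CD) analysis at level $k$: I must exclude the mixed configurations $A^k_{k-1}=D^k_{k-1}=1$ in the $\tuple v$ branch and $B^k_{k-1}=C^k_{k-1}=1$ in the $\tuple w$ branch. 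In both, the offending diagonal $(D^k_{k-1},C^k_{k-1},A^k_{k-1},B^k_{k-1})$ takes the shape $(1,0,1,0)$ or $(0,1,0,1)$, each of which violates the criterion $a_1=a_2=0$ of \Cref{lem:symmetric-5ary} for $h^k_{k-1}\triangleleft (h^k_{k-1})^d$. These contradictions force $\tuple h^k=\tuple v$ in the first branch and $\tuple h^k=\tuple w$ in the second; back-propagation then makes $\tuple h^{k-1}$ equal to $\tuple t^{k-1}$ in the $\tuple v$ branch and to $\tuple u$ in the $\tuple w$ branch (since there $B^{k-1}_j=A^k_j=0$ for all $j$ allows $D^{k-1}_k=C^k_k=1$ to persist), while $\tuple h^i=\tuple t^i$ for $i<k-1$ in either branch.
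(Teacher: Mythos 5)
Your proposal is, in substance, the paper's own argument written out in explicit truth-table coordinates: your forward induction is the paper's Claim (that $h^i_j(xxxyy)=x$ for $j\geq i$), your level-$k$ resolution is the paper's case split on (AB) versus (CD), and your back-propagation is the paper's final downward induction via (Chain). The dictionary $(a_1,a_2,a_3,a_4)=(D,C,A,B)$ is consistent, the translations of (Chain), (AB), (CD) and of the membership constraints through \Cref{lem:symmetric-5ary} are all correct, and the exclusion of the ``mixed configurations'' at sort $k-1$ via $h^k_{k-1}\triangleleft(h^k_{k-1})^d$ is exactly what is needed. Your explicit remark that the operations may be assumed $5$-ary symmetric matches the paper's implicit use of the same assumption.

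There is, however, one sub-case your write-up does not cover. The second bullet of the lemma also applies when all the $\tuple{h}^i$ lie in $\minAk$ (not $\minCk$) and satisfy (CD) --- this combination is genuinely needed later, in the proof that $\minCk\not\leq\minAk$, where the (CD) identities are evaluated inside $\minAk$. Your level-$k$ analysis for (CD) starts from ``self-duality $A^k_k+C^k_k=1$'', i.e., $h^k_k=(h^k_k)^d$, which is an axiom of $\minCk$ but fails in general in $\minAk$, where one only has $h^k_k\leq(h^k_k)^d$, i.e., $A^k_k+C^k_k\leq 1$ and $B^k_k+D^k_k\leq 1$. The conclusion is still reachable, and by a route you already have the pieces for: in $\minAk$ the link $h_{k-1}\triangleleft h_k$ is a genuine $\triangleleft$, so your forward induction extends to $i=k$ and yields $A^k_k=1$; then (CD) gives $D^k_k=A^k_k=1$, whence $B^k_k\leq 1-D^k_k=0$ and $C^k_k=B^k_k=0$, which is precisely the $\tuple{v}$ branch (the $\tuple{w}$ branch cannot occur in $\minAk$, consistently with $\tuple{u}\notin\minAk$). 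You should add this sub-case explicitly; as written, the (CD) analysis silently assumes $\minCk$. A second, purely expository point: in the forward induction you only derive $B^i_{i+1}\geq A^i_i$ explicitly, whereas the claim $A^{i+1}_j=1$ for all $j\geq i+1$ needs $B^i_j=1$ for all such $j$; this follows by spreading along $h^i_{i+1}\leq\dots\leq h^i_j$, which you use for the endpoint $j=k$ but should state for the intermediate sorts as well.
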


\begin{proof}
   The first claim is common for both items in the lemma. It is a consequence of (Chain).

    \begin{claim} \label{cl:propagate_one}
        For every $i \in [k-1]$ and $j \in [k]$ with $j \geq i$, we have $h^i_j(xxxyy) = x$. Moreover, if $\tuple{h}^{k-1}$ is in $\minAk$ or $\minDk$, then also $h^k_k(xxxyy)=x$.
    \end{claim}
    \begin{proof}
        We prove the claim by induction on $i$ starting with $i=1$, where the claim follows immediately from $\tuple{h}^1\xxxyy = \tuple x$. Assume now $1<i \leq k-1$. By the induction hypothesis, we have $h^{i-1}_{i-1}(11100) = 1$. We also know that $h^{i-1}_{i-1} \triangleleft h^{i-1}_{i}$, therefore $h^{i-1}_i(11110)=1$, and then, as $h^{i-1}_{i} \leq h^{i-1}_{i+1} \leq \dots \leq h^{i-1}_k$, we obtain $h^{i-1}_j(11110)=1$ whenever $i \leq j \leq k$. No $h^{i-1}_j(xxxxy)$ is $\vee$, therefore $h^{i-1}_j(xxxxy)$ is equal to $x$, and so is $h^i_j(xxxyy)=h^{i-1}_j(xxxxy)$ by (Chain). If $\tuple{h}^{k-1}$ is in $\minAk$ or $\minDk$, then $h^{k-1}_{k-1} \triangleleft h^{k-1}_k$, so the argument works for $i=k$ as well.
    \end{proof}

    Now we make a case distinction and prove that $\tuple{h}^k$ is as claimed. 
    \begin{itemize}
        \item Assume that all $\tuple{h}^i$ are in $\minAk$ and $\tuple{h}^k$ satisfies (AB). The first assumption and \Cref{cl:propagate_one} ensures $h^k_k(xxxyy)=x$, while the second assumption implies $\tuple{h}^k \xxxxy = \wedge^k$, in particular $h^k_k(xxxxy)=\wedge$. Finally, as $h^k_k(11110)=0$ and $h^k_i \triangleleft h^k_k$ for every $i \in [k-1]$, the $i$th column in the table of $\tuple{h}^k$ consists of zeros and we get $\tuple{h}^k = \tuple{t}^k$. 
        \item Assume that all $\tuple{h}^i$ are in $\minAk$ and $\tuple{h}^k$ satisfies (CD).
        As in the previous case we get $h^k_k(xxxyy)=x$ and (CD) implies $h^k_k(xxxxy)=y$. We again have $h^k_k(11110)=0$ so all but the last columns are zero and we get $\tuple{h}^k = \tuple{v}$. 
        \item Assume that all $\tuple{h}^i$ are in $\minCk$ and $\tuple{h}^k$ satisfies (CD). That the last two components  of $\tuple{h}$ (i.e., $h^k_{k-1}$, $h^k_k$)  are equal to the last two components of $\tuple{v}$ or $\tuple{w}$ is proved as in \Cref{lem:Cinf}: from $h^k_{k-1} \triangleleft (h^k_{k-1})^d$ and (CD) it follows that the $h^k_{k-1}$ column is zero, and from $h^k_k = (h^k_k)^d$ and (CD) it follows that $h^k_k = v_k$ or $h^k_k = w_k$. Then $\tuple{h}=\tuple{v}$ or $\tuple{h} = \tuple{w}$ as $h^k_{k-1}(11110)=0$ and $h^k_i \triangleleft h^k_{k-1}$ for all $i \in [k-2]$. (In this case we did not need the claim yet.)
    \end{itemize}

   The proof is finished by induction going down from $\tuple{h}^{i+1}$ to $\tuple{h}^{i}$ for $i = k-1, k-2, \dots, 1$: The minor $\tuple{h}^{i} \xxxxy$ is equal to $\tuple{h}^{i+1} \xxxyy$ by (Chain), $h^i_j(xxxyy)=x$ for $j \geq i$ by \Cref{cl:propagate_one}, and $h^i_j(xxxyy)=\wedge$ for $j < i$ as $h^i_i(11110)=0$ and $h^i_j \triangleleft h^i_i$.
\end{proof}

\begin{lemma} \label{lem:ABCDk}
    For every $k \in \mathbb N$, minions $\minAk, \minBk, \minCk, \minDk$ are minion cores. Moreover, $\minDk \not\leq \minBk$ and, for $k \geq 2$, $\minCk \not\leq \minAk$.
\end{lemma}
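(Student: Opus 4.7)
The plan is to reduce the core property to the binary part via \Cref{lem:cores-binaries} (since each sort has size $2$) and to constrain endomorphisms by transporting the symmetric $5$-ary operations $\tuple{t}^1,\dots,\tuple{t}^k,\tuple{u},\tuple{v},\tuple{w}$ through the forcing \Cref{lem:chain-forcing}. The non-inequalities will fall out of the same machinery, turned into contradictions by the membership failures recorded in \Cref{lem:chain-membership}.

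For $\minAk$ and $\minBk$ I would start with the chain $(\tuple{t}^1,\dots,\tuple{t}^k)$, which lies in $\minBk^{(5)} \subseteq \minAk^{(5)}$ by \Cref{lem:chain-membership} and satisfies identities (Chain) and (AB) by \Cref{lem:chain-equations}. Since any minion homomorphism preserves height-$1$ identities, the image of this chain under an endomorphism $\xi$ is still a chain in the same minion satisfying (Chain)+(AB); by the first part of \Cref{lem:chain-forcing} each $\tuple{t}^i$ is therefore fixed. Applying $\xi$ to the binary minors $\tuple{t}^i\xxxxy = \wedge^i x^{k-i}$ and $\tuple{t}^i\yyyyx = \wedge^i y^{k-i}$ (the latter by symmetry of $\tuple{t}^i$), together with $\xi(\tuple{x}) = \tuple{x}$, $\xi(\tuple{y}) = \tuple{y}$ from \Cref{lem:cores-projections}, then shows that $\xi^{(2)}$ is the identity on all of $\minAk^{(2)} = \minBk^{(2)}$, hence a bijection.

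For $\minDk$ and $\minCk$ I would use the two chains $(\tuple{t}^1,\dots,\tuple{t}^{k-1},\tuple{v})$ and $(\tuple{t}^1,\dots,\tuple{t}^{k-2},\tuple{u},\tuple{w})$; both lie in $\minCk^{(5)}$ by \Cref{lem:chain-membership} and satisfy (Chain)+(CD) by \Cref{lem:chain-equations}, and the second part of \Cref{lem:chain-forcing} says these are the only such chains inside $\minCk$. For an endomorphism of $\minDk$ only the $\tuple{v}$ chain is available, since $\tuple{w}\notin\minDk$, so $\xi$ fixes it and the binary-minor computation yields bijectivity as above. For an endomorphism of $\minCk$ there are two possibilities: either $\xi$ fixes the $\tuple{v}$ chain, or $\xi(\tuple{t}^{k-1}) = \tuple{u}$ and $\xi(\tuple{v}) = \tuple{w}$. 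I would verify by computing $\tuple{u}\xxxxy$, $\tuple{u}\yyyyx$, $\tuple{w}\xxxxy$, $\tuple{w}\yyyyx$ that in the second case $\xi^{(2)}$ merely swaps $\wedge^{k-1}x$ with $\wedge^{k-1}y$ and fixes every other element of $\minCk^{(2)}$—still a bijection.

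The non-inequalities come from the same setup. A hypothetical $\xi\colon\minDk\to\minBk$ would send the $\tuple{v}$ chain to a chain in $\minBk\subseteq\minAk$ satisfying (Chain)+(CD); the all-in-$\minAk$ sub-case of \Cref{lem:chain-forcing} then forces the image to be the $\tuple{v}$ chain itself, but $\tuple{v}\notin\minBk$ by \Cref{lem:chain-membership}—contradiction. For $\xi\colon\minCk\to\minAk$ with $k\geq 2$, forcing sends both $\minCk$-chains to the same $\tuple{v}$ chain in $\minAk$, so $\xi(\tuple{u}) = \tuple{t}^{k-1} = \xi(\tuple{t}^{k-1})$ and $\xi(\tuple{w}) = \tuple{v} = \xi(\tuple{v})$; evaluating the minor $\xxxxy$ at $\xi(\tuple{u}) = \tuple{t}^{k-1}$ gives $\xi(\wedge^{k-1}y) = \wedge^{k-1}x$, while evaluating it at $\xi(\tuple{v}) = \tuple{v}$ gives $\xi(\wedge^{k-1}y) = \wedge^{k-1}y$, and these binary operations are distinct for $k\geq 2$—contradiction. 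I expect the main obstacle to be the careful bookkeeping in the $\minCk$-core case: one must be sure that the candidate swap $\wedge^{k-1}x\leftrightarrow\wedge^{k-1}y$ is genuinely the only non-identity possibility on $\minCk^{(2)}$ compatible with the forcing, and that the minors of $\tuple{u}$ and $\tuple{w}$ computed on the binary inputs $\xxxxy$ and $\yyyyx$ interact consistently with the rest of $\minCk^{(2)}$.
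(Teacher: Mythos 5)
Your proposal is correct and follows essentially the same route as the paper's proof: the same chains $(\tuple{t}^1,\dots,\tuple{t}^k)$, $(\tuple{t}^1,\dots,\tuple{t}^{k-1},\tuple{v})$, $(\tuple{t}^1,\dots,\tuple{t}^{k-2},\tuple{u},\tuple{w})$, the identities (Chain), (AB), (CD), the forcing in \Cref{lem:chain-forcing}, and the reduction to binary parts via \Cref{lem:cores-binaries,lem:cores-projections}. One correction: in the $\minDk$ case you exclude the image chain $(\tuple{t}^1,\dots,\tuple{t}^{k-2},\tuple{u},\tuple{w})$ on the grounds that $\tuple{w}\notin\minDk$, but in fact $\tuple{v},\tuple{w}\in\minDk$ (see \Cref{lem:chain-membership}); the chain is excluded because $\tuple{u}\notin\minDk$ for $k\geq 2$ (for $k=1$ the claim is anyway immediate since $\minDx{1}^{(2)}=\{\tuple{x},\tuple{y}\}$ is fixed pointwise). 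Your argument for $\minCk\not\leq\minAk$, computing $\xi^{(2)}(\wedge^{k-1}y)$ once as $\xi(\tuple{u})\xxxxy$ and once as $\xi(\tuple{v})\xxxxy$, is a harmless variant of the paper's clash $\tuple{v}\xxxxy=\tuple{w}\xxxyy$; both are valid.
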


\begin{proof}
    Consider first a minion homomorphism $\xi: \minAk \to \minAk$. The operations $(\tuple{h}^1, \dots, \tuple{h}^k) = (\tuple{t}^1$, \dots, $\tuple{t}^k)$ are in $\minAk$ by \Cref{lem:chain-membership} and they satisfy (Chain) and (AB) by \Cref{lem:chain-equations}. Their $\xi$-images must satisfy the corresponding equations (with $\tuple{h}^i$ replaced by $\xi(\tuple{h}^i)$), so $(\xi(\tuple{h}^1), \dots, \xi(\tuple{h}^k)) = (\tuple{t}^1, \dots, \tuple{t}^k)$ by \Cref{lem:chain-forcing}. 
    It then follows that $\xi^{(2)}(\wedge^ix^{k-i}) = \xi^{(2)}(\tuple{t}^i \xxxxy) = (\xi^{(5)}(\tuple{t}^i)) \xxxxy = \wedge^ix^{k-i}$ and $\xi^{(2)}(\wedge^iy^{k-i})=\wedge^iy^{k-i}$ for $i \in [k]$ and $\xi^{(2)}(\tuple{x})=\tuple{x}$ (e.g., by \Cref{lem:cores-projections}). The binary part of $\xi$ is thus the identity, so $\minAk$ is a minion core by \Cref{lem:cores-binaries}.
    The proof that $\minBk$ is a minion core is the same.

    Consider now the minion $\minDk$. This time we use $(\tuple{h}^1, \dots, \tuple{h}^k) = (\tuple{t}^1, \dots, \tuple{t}^{k-1}, \tuple{v})$ and (Chain) with (CD). \Cref{lem:chain-membership,lem:chain-equations,lem:chain-forcing} again guarantee that the binary part of every minion homomorphism $\minDk \to \minDk$ is the identity, therefore $\minDk$ a minion core. Moreover, there is no minion homomorphism $\minDk \to \minBk$ as neither $\tuple{v}$ nor $\tuple{w}$ is in $\minBk$. 

    To show that $\minCk$ is a minion core we use the same tuple. Our lemmas imply that for every $\xi: \minCk \to \minCk$, its image  $(\xi(\tuple{h}^1), \dots, \xi(\tuple{h}^k))$ is either $(\tuple{t}^1, \dots, \tuple{t}^{k-2}, \tuple{t}^{k-1}, \tuple{v})$ or  $(\tuple{t}^1, \dots, \tuple{t}^{k-2}, \tuple{u}, \tuple{w})$. In the first case, $\xi^{(2)}$ is the identity. In the second case, $\xi^{(2)}(\wedge^{k-1}x) = \wedge^{k-1}y$, $\xi^{(2)}(\wedge^{k-1}y) = \wedge^{k-1}x$, and $\xi^{(2)}$ is identical on the remaining operations in $\minCk^{(2)}$. In both cases, $\xi^{(2)}$ is a bijection, therefore $\minCk$ is a minion core. 

    Finally, to prove that, for $k \geq 2$, there is no minion homomorphism $\xi: \minCk \to \minAk$ we use (Chain) with (CD) and both tuples appearing in the second item of \Cref{lem:chain-equations}. Minion homomorphism $\xi$ would map each of them to one of them. But $\tuple{u} \not\in \minAk$, therefore both tuples would be mapped to $(\tuple{t}^1, \dots, \tuple{t}^{k-2}, \tuple{t}^{k-1}, \tuple{v})$. However,  $\tuple{v} \xxxxy = \tuple{w} \xxxyy$ but $\xi(\tuple{v}) \xxxxy = \tuple{v} \xxxxy \neq \tuple{v} \xxxyy = \xi(\tuple{w}) \xxxyy$, so no such $\xi$ exists. 
\end{proof}

In general, a non-inequality $\minion{M} \not\leq \minion{N}$ can  always be witnessed by height 1 identities satisfiable in $\minion{M}$ but not in $\minion{N}$. Such identities for our minions follow from the proofs above; they are based on symmetry, (Chain), (AB), and (CD). (The minion $\minBinf$ was somewhat exceptional in that we used commutativity. Note however that we could have used (AB).) The most complex is $\minCk \not\leq \minAk$, where an explicit list of identities coming from the proof is the following.
    \begin{align*}
t_i(x_1x_2x_3x_4x_5)&\approx t_i(x_{\sigma(1)}x_{\sigma(2)}x_{\sigma(3)}x_{\sigma(4)}x_{\sigma(5)}) \quad \forall i \in [k] \ \forall \sigma \in S_5 \\   
t'_i(x_1x_2x_3x_4x_5)&\approx t'_i(x_{\sigma(1)}x_{\sigma(2)}x_{\sigma(3)}x_{\sigma(4)}x_{\sigma(5)}) \quad \forall i \in [k] \ \forall \sigma \in S_5 \\   
        t_1(xxxyy) &\approx t_1(xxxxx) \approx t_1'(xxxyy)\\   
        t_{i+1}(xxxyy) &\approx t_i(xxxxy), \quad t_{i+1}'(xxxyy) \approx t_i'(xxxxy) \quad \forall i\in[k-1] \\ 
        t_k(xxxxy) &\approx t_k(xxyyy), \quad t'_k(xxxxy) \approx t_k'(xxyyy) \\
        t_k(xxxxy) &\approx t'_k(xxxyy)
   \end{align*}

An alternative and perhaps more elegant way to achieve our results would be via sequence of ternary operations 
\begin{align*}
t_1(xyy) &\approx t_1(xxx) \\
t_{i+1}(xyy) & \approx t_i(xxy) \quad \forall i\in[k-1]
\end{align*}
together with $t_k(xyx) \approx t_k(yxy)$ instead of (AB) and $t_k(xxy) \approx t_k(yxx)$ instead of (CD). An advantage of symmetric operations is that they are simple to work with, in particular, the relation $\triangleleft$ between them is transparent.

\subsection{Summary} \label{subsec:summary}

We are ready to prove the main result of this paper.

\begin{theorem} \label{thm:main}
Let $\Gamma$ be a set of at most binary relations on a multisorted Boolean set. Then $\Pol(\Gamma)$ is equivalent to exactly one of the minions $\minion{T}$, $\minAk$, $\minBk$, $\minCkp$, $\minDk$, $\minBinf$, $\minCinf$, $\minDinf$, $k \in \mathbb N$. These minions are all minion cores and the ordering by minion homomorphism is exactly as depicted in \Cref{fig:order0,fig:order}.

Conversely, for every minion $\minion{M}$ from the list ($\minion{T}$, $\minAk$, etc.), there exists a set of at most binary relations $\Gamma$ on a multisorted Boolean set such that $\minion{M} \sim \Pol(\Gamma)$. 
\end{theorem}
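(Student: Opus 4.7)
The plan is to assemble the statement from the results of the previous subsections. For the forward direction, given $\Gamma$, \Cref{lem:description_exists} yields either $\Pol(\Gamma) \sim \minion{T}$ or $\Pol(\Gamma) \sim \Clo(\descript)$ for some description $\descript$. In the latter case, \Cref{thm:reduced-description} passes to an equivalent reduced description, and \Cref{thm:collapse} identifies the resulting clone with exactly one member of the list $\{\minAl, \minBl, \minClp, \minDl, \minBinf, \minCinf, \minDinf\}$ according to the case analysis on the presence of a monotone symbol, whether $m=0$ or $m>0$, and the parity of $\chrd$.

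That each minion in the list is a minion core is the content of \Cref{lem:Dinf,lem:Binf,lem:Cinf,lem:ABCDk}. By the uniqueness theorem for minion cores established in \Cref{sec:cores}, two equivalent minion cores are isomorphic, and since the minions in the list are visibly pairwise non-isomorphic (e.g.\ they differ in their number of sorts or in which height~1 identities they satisfy), they represent distinct $\sim$-classes. Hence each $\Pol(\Gamma)$ is equivalent to a unique member of the list.

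For the ordering, the positive minion homomorphisms of \Cref{thm:inequalities} realize every edge drawn in \Cref{fig:order0,fig:order}. The non-inequalities $\minBinf \not\leq \minDx{1}$, $\minCinf \not\leq \minDinf$, $\minDk \not\leq \minBk$ (for all $k$), and $\minCk \not\leq \minAk$ (for $k \geq 2$) from \Cref{lem:Binf,lem:Cinf,lem:ABCDk} are then propagated by transitivity to rule out every missing edge: for example, any $\minAk \leq \minBk$ would compose with $\minDk \leq \minAk$ to contradict $\minDk \not\leq \minBk$, and any hypothetical $\minBinf \leq \minCk$ would compose with the chain $\minCk \leq \dots \leq \minDx{1}$ to contradict $\minBinf \not\leq \minDx{1}$. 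I expect this transitivity bookkeeping, which must cover every non-adjacent pair of the infinite diagram, to be the most tedious but not conceptually difficult step.

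For the converse, it suffices to exhibit, for each minion $\minion{M}$ in the list, a description $\descript$ with $\Clo(\descript) \sim \minion{M}$, since $\Clo(\descript) = \Polid(\Delta)$ for the set $\Delta$ of binary relations obtained by reading off each constraint as in the proof of \Cref{lem:description_exists}. By the case analysis of \Cref{thm:collapse}, appropriate choices are: $\dd{f_1 \triangleleft \dots \triangleleft f_{2k-1}}$ for $\minAk$; $\dd{f_1 \triangleleft \dots \triangleleft f_{2k}}$ for $\minBk$; the same chains augmented by an isolated $\dd{g_1 = g_1^d}$ for $\minDk$ and $\minCkp$ respectively; $\dd{f_1 \triangleleft f_1}$ for $\minBinf$; $\dd{f_1 \triangleleft f_1}$ together with $\dd{g_1 = g_1^d}$ for $\minCinf$; $\dd{g_1 = g_1^d}$ together with $\dd{g_1 \triangleleft g_1}$ for $\minDinf$; and $\Gamma = \emptyset$ for $\minion{T}$.
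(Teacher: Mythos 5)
Your proposal follows the paper's proof almost exactly: the same pipeline \Cref{lem:description_exists} $\to$ \Cref{thm:reduced-description} $\to$ \Cref{thm:collapse} for membership in the list, \Cref{thm:inequalities} for the positive edges, the four non-inequalities propagated by transitivity for the missing edges, and explicit reduced descriptions (whose chain ranks you compute correctly) for the converse. Two points deserve repair. First, your shortcut for uniqueness --- that the listed minions are ``visibly pairwise non-isomorphic, e.g.\ they differ in their number of sorts'' --- is not valid as stated: the number of sorts is not an invariant of abstract minion isomorphism (for instance, $\minBinf$ maps into each $\minBk$ via the diagonal $h \mapsto (h,\dots,h)$, and a one-sorted minion can be isomorphic to a many-sorted one), and separating, say, $\minAk$ from $\minBk$ by height-1 identities is precisely the content of the non-inequality lemmas, not something ``visible.'' This is harmless only because, as the paper itself notes, uniqueness already follows once the ordering is shown to be exactly as depicted, which your transitivity argument does deliver. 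Second, your list of non-inequalities omits $\minion{T} \not\leq \minAx{1}$, which is needed to separate the top element from the rest of the diagram; the paper supplies it by observing that the two binary projections coincide in $\minion{T}$ but are distinct in $\minAx{1}$. With these two repairs the argument is complete and structurally identical to the paper's.
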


\begin{proof}
\Cref{lem:description_exists} shows that unless $\Pol(\Gamma) \sim \minion{T}$,  $\Pol(\Gamma)$ is equivalent to $\Clo(\descript)$ for some description $\descript$. By \Cref{thm:reduced-description}, the description can be chosen to be reduced. \Cref{thm:collapse} then implies that $\Pol(\Gamma)$ is equivalent to at least one of the minions from the list. The inequalities between them were verified in \Cref{thm:inequalities}. \Cref{lem:Dinf,lem:Binf,lem:Cinf,lem:ABCDk} then show that all of them are minion cores and that $\minBinf \not\leq \minDx1$, $\minCinf \not\leq \minDinf$, $\minDk \not\leq \minBk$ for all $k \in \mathbb{N}$, and $\minCk \not\leq \minAk$ for all $k \geq 2$.

For the first part of the statement, it remains to observe that there are no inequalities between these minions other than the already established ones. (Note that this also implies the uniqueness.) Assume the converse, that is, there are minions $\minion{M}$ and $\minion{N}$ among them such that $\minion{M} \leq \minion{N}$, but this inequality does not follow from the inequalities established in \Cref{thm:inequalities} by the reflexivity and transitivity of $\leq$. 

We distinguish cases according to $\minion{M}$.
\begin{itemize}
    \item $\minion{M}=\minCinf$. The only $\minion{N}$ such that $\minion{M} \leq \minion{N}$ does not follow from the established inequalities is $\minDinf$, a contradiction with $\minCinf \not\leq \minDinf$.

    \item $\minion{M} = \minBinf$. The only candidates for $\minion{N}$ are $\minDinf$, $\minCinf$, $\minDk$ ($k \geq 1$), and $\minCk$ ($k \geq 2$). But then $\minBinf \leq \minion{N} \leq \minDx1$, a contradiction with $\minBinf \not\leq \minDx1$.

    \item $\minion{M} = \minCk$, $k \geq 2$. The candidates for $\minion{N}$ are $\minAx{l}$, $\minBx{l}$, $\minDx{l}$ for $l \geq k$ (allowing $l=\infty$ from now on), and $\minCx{l}$ for $l > k$. Then $\minCk \leq \minion{N} \leq \minAk$, a contradiction.

    \item $\minion{M} = \minDk$. We similarly get $\minDk \leq \minBk$.

    \item $\minion{M} = \minAk$. The candidates for $\minion{N}$ are $\minBx{l}$ for $l \geq k$ and $\minAx{l}$ for $l > k$, but then $\minDk \leq \minAk \leq \minion{N} \leq \minBk$, and $\minCx{l}$ and $\minDx{l}$, but then $\minBinf \leq \minAk \leq \minion{N} \leq \minDx{1}$. 

    \item $\minion{M} = \minBk$. We similarly get $\minCx{k+1} \leq \minAx{k+1}$ or $\minBinf \leq \minDx{1}$.

    \item $\minion{T}$. $\minion{T} \not\leq \minAx{1}$ since $\pi^2_1$ and $\pi^2_2$ are equal in $\minion{T}$ and different in $\minAx{1}$. 
\end{itemize}

For the converse statement, we just observe that for $\minion{M}=\minion{T}$ one can take e.g. the empty $\Gamma$ on any Boolean multisorted set, and for the other minions we select $\descript$ so that $\Clo(\descript) \sim \minion{M}$ follows from \Cref{thm:collapse}, and then define $\Gamma$ accordingly, including also the singleton unary relations to ensure $\Clo(\descript) = \Pol(\Gamma)$. 
\end{proof}

\section{Function clones and multisorted Boolean clones} \label{sec:small-projections}

The goal of this section is to show that the classification in \Cref{thm:main} is the same for function clones of the form $\Pol(\Psi)$ where $\Psi$ is a set of binary relations with small projections on a finite set. 

It will be convenient to identify $n$-tuples and functions from $[n]$, that is, an $n$-tuple $\tuple{c} \in C^n$ is a function $[n] \to C$. This enables us to write the definition of a minor in a compact way: if $f: C^n \to C$, $\alpha: [n] \to [m]$, and $\tuple{c} \in C^m$, then
$$
f^{(\alpha)}(\tuple{c}) = f(\tuple{c} \circ \alpha).
$$

The following observation will help us to define a clone homomorphism in the subsequent theorem.

\begin{lemma} \label{lem:homo-from-x}
    Let $\minion{M}$ be a minion, let $l \in \mathbb{N}$, and let $\minion{O}$ be the function clone of all operations on $C = [l]$. Then for every function 
    $$X: \minion{M}^{(l)} \to C,$$
    the collection $\xi = (\xi^{(n)}: \minion{M}^{(n)} \to \minion{O}^{(n)})_n$ defined by
    $$
    (\xi^{(n)}(f))(\tuple{c}) = X( \minion{M}^{(\tuple{c})}(f)) \qquad \forall n \in \mathbb{N}, \ \forall f \in \minion{M}^{(n)}, \  \forall \tuple{c} \in C^n
    $$
    is a minion homomorphism from $\minion{M}$ to $\minion{O}$.
\end{lemma}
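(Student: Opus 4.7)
The plan is to check the two requirements directly from the definitions: that $\xi^{(n)}$ lands in the correct set (namely $\minion{O}^{(n)}$, which is all $n$-ary operations on $C$), and that the collection $\xi$ preserves minors in the sense of \Cref{def:minionhomo}. The first requirement is essentially free: since $X$ maps into $C$, the rule $\tuple c \mapsto X(\minion{M}^{(\tuple c)}(f))$ is manifestly a function $C^n \to C$, and $\minion{O}$ contains all such functions by assumption. So the real content is the minor-preservation equality.

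The main observation is that, under the identification of $n$-tuples with functions $[n] \to C$, the minor map of $\minion{M}$ composes in a clean way: $\minion{M}^{(\tuple{c})} \circ \minion{M}^{(\alpha)} = \minion{M}^{(\tuple{c}\circ \alpha)}$ whenever $\alpha : [n] \to [m]$ and $\tuple c : [m] \to C$, by the functoriality clause in \Cref{def:minion}. Using this, I would fix $f \in \minion{M}^{(n)}$, a map $\alpha : [n]\to [m]$, and a point $\tuple c \in C^m$, and compute both sides of $\xi^{(m)}(\minion{M}^{(\alpha)}(f)) = \minion{O}^{(\alpha)}(\xi^{(n)}(f))$ evaluated at $\tuple c$. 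The left-hand side unfolds as $X(\minion{M}^{(\tuple c)}(\minion{M}^{(\alpha)}(f))) = X(\minion{M}^{(\tuple c \circ \alpha)}(f))$. The right-hand side, using the hint in the lemma statement that the minor in $\minion{O}$ is evaluation at $\tuple c \circ \alpha$, unfolds as $(\xi^{(n)}(f))(\tuple c \circ \alpha) = X(\minion{M}^{(\tuple c \circ \alpha)}(f))$. The two agree on the nose, finishing the proof.

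There is no real obstacle here; the whole lemma is just a re-expression of the functoriality of $\minion{M}$ plus the concrete description of minors in $\minion{O}$. The one bookkeeping point I would take a little care with is to be explicit about the identification $C^n \cong C^{[n]}$, so that the equality $\minion{O}^{(\alpha)}(g)(\tuple c) = g(\tuple c \circ \alpha)$ is transparent and matches the paper's earlier formula $(f \circ (\pi^m_{i_1},\dots,\pi^m_{i_n}))(\tuple a) = f(a_{i_1},\dots,a_{i_n})$ from \Cref{subsec:function-minion}.
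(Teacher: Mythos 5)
Your proposal is correct and follows the paper's proof essentially verbatim: both arguments evaluate the two sides of the minor-preservation equation at a tuple $\tuple{c}\in C^m$, use the functoriality clause $\minion{M}^{(\tuple{c})}\circ\minion{M}^{(\alpha)}=\minion{M}^{(\tuple{c}\circ\alpha)}$ on the left, and the concrete description of minors in $\minion{O}$ on the right, landing on $X(\minion{M}^{(\tuple{c}\circ\alpha)}(f))$ both times. The only addition over the paper is your explicit (and harmless) remark that $\xi^{(n)}(f)$ indeed lies in $\minion{O}^{(n)}$.
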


\begin{proof}
    For $n,m \in \mathbb{N}$, $f \in \minion{M}^{(n)}$, and $\alpha: [n] \to [m]$, we check $\xi^{(m)}(\minion{M}^{(\alpha)}(f)) = \minion{O}^{(\alpha)}(\xi^{(n)}(f))$ by comparing the values of both sides on a tuple $\tuple{c} \in C^m$ (or $\tuple{c}: [m] \to [l]$).
    \begin{align*}
    (\xi^{(m)}(\minion{M}^{(\alpha)}(f)))(\tuple{c})
    &= X( \minion{M}^{(\tuple{c})}(\minion{M}^{(\alpha)}(f)))
    =X(\minion{M}^{(\tuple{c} \circ \alpha)}(f))
    \\
    (\minion{O}^{(\alpha)}(\xi^{(n)}(f)))(\tuple{c})
    &=(\xi^{(n)}(f))^{(\alpha)}(\tuple{c})
    =(\xi^{(n)}(f))(\tuple{c} \circ \alpha)
    =X(\minion{M}^{(\tuple{c} \circ \alpha)}(f))
    \end{align*}
\end{proof}

We remark that in fact every homomorphism $\xi: \minion{M} \to \minion{O}$ arises as in the previous lemma. A natural extension of this fact works for the functions minion of all operations from $C=[l]$ to $D$ (where $X$ would be a function $\minion{M}^{(l)} \to D$), and it can also be naturally extended to the multisorted setting; \Cref{lem:binaries,lem:preserving-minors} could be regarded as special cases of such refinements. Finally, we remark that Lemma~4.4 in~\cite{PCSP} gives a generalization to any polymorphism minion on finite sets instead of $\minion{O}$ by specifying which of the mappings $X$ give rise to a well-defined $\xi$. 

We are ready to show that clones determined by relations with small projections are equivalent to multisorted Boolean clones. Since there is no advantage in restricting to binary relations, we allow arbitrary arities. 

\begin{theorem} \label{thm:translation}
    Let $m \in \mathbb{N}$.
    
    For every set $\Psi$ of at most $m$-ary relations with small projections on a finite set $C$, there exists a set $\Gamma$ of at most $m$-ary relations on a multisorted Boolean set such that  $\Pol(\Psi) \sim \Pol(\Gamma)$.

    Conversely, for every set $\Gamma$ of at most $m$-ary relations on a multisorted Boolean set $\tuple{A}$, there exists a set $\Psi$ of at most $m$-ary relations with small projections on a finite set such that $\Pol(\Gamma) \sim \Pol(\Psi)$.
\end{theorem}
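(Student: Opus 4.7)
I will prove both directions of the theorem by constructing explicit minion homomorphisms witnessing $\sim$ between $\Pol(\Psi)$ and $\Pol(\Gamma)$. In each direction I first apply \Cref{thm:rel-cores} to reduce to the case where the polymorphism clone in question is idempotent; the trivial case $\sim \minion{T}$ is handled separately by taking $\Gamma$ or $\Psi$ to be empty on a Boolean set. The reduction preserves the small-projection property.

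For the converse direction, given a $k$-sorted Boolean $\Gamma$, I set $C = \{(a,i) : a \in \{0,1\},\ i \in [k]\}$, translate each $R \in \Gamma$ of type $(i_1,\ldots,i_m)$ to $R' = \{((a_1,i_1),\ldots,(a_m,i_m)) : (a_1,\ldots,a_m) \in R\}$, and let $\Psi$ consist of these $R'$'s together with the sort indicators $U_i = \{(0,i),(1,i)\}$ and the singletons $\{(a,i)\}$; every relation in $\Psi$ has small projections. The forward direction is symmetric: given $\Psi$ on finite $C$ with small projections, I let $\mathcal{S}$ be the set of $2$-element subsets of $C$ appearing as projections of relations in $\Psi$, fix bijections $\phi_S: S \to \{0,1\}$ for $S \in \mathcal{S}$, and translate each $R \in \Psi$ to a multisorted relation on the Boolean multisorted set with sorts indexed by $\mathcal{S}$. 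In both directions, the map $\xi: \Pol(\Psi) \to \Pol(\Gamma)$ is defined by restricting $f$ to each sort and composing with the fixed bijection to $\{0,1\}$; this is well-defined because every polymorphism in $\Pol(\Psi)$ preserves each sort (either as an explicit member of $\Psi$, or as a projection of some relation in $\Psi$ and hence preserved by every polymorphism).

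For the reverse map $\zeta: \Pol(\Gamma) \to \Pol(\Psi)$ I will invoke \Cref{lem:homo-from-x} to reduce the construction to specifying a function $X: \Pol(\Gamma)^{(|C|)} \to C$. Given $\tuple{h} \in \Pol(\Gamma)^{(|C|)}$, I examine its support (the set of indices on which it genuinely depends). If this support lies inside the two indices corresponding to a single sort, I set $X(\tuple{h})$ by evaluating the corresponding component of $\tuple{h}$ as a binary Boolean operation and translating the result back to $C$ via the sort identification; otherwise I pick a fixed default in $C$. Idempotency rules out the empty-support case (in which $\tuple{h}$ would be a constant operation), so the dichotomy is well-defined. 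The main obstacle, and the only remaining verification, is that the resulting $\zeta(\tuple{g})$ preserves every relation in $\Psi$; this reduces to the ``pure-column'' case where all components of the input lie within a common sort, on which $\zeta(\tuple{g})$ acts by the appropriate component of $\tuple{g}$ by construction of $X$. The small-projection property guarantees that every column of a tuple in any relation of $\Psi$ is of this pure form. The potentially problematic ``mixed'' tuples spanning several sorts --- on which no direct pointwise formula could be consistent with the minor structure --- are handled automatically by \Cref{lem:homo-from-x} once $X$ has been specified.
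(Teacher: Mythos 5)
Your proposal follows essentially the same route as the paper's proof: both directions translate relations back and forth between a finite set and the multisorted Boolean set whose sorts are the two-element projections, use restriction-to-sorts (composed with sort identifications) for one inequality, and invoke \Cref{lem:homo-from-x} with an ``essential coordinates concentrated in a single sort'' function $X$ for the other, with preservation checked only on columns lying in a single projection. The one point worth tightening (present at the same level of informality in the paper's own argument) is the definition of $X$ when the union of the essential coordinates of the components is a singleton $\{i\}$ with $i$ belonging to no two-element sort: there you should set $X = i$ directly rather than fall back on the arbitrary default, since otherwise the image operation may fail to preserve the singleton relation $\{i\}$ that the idempotent reduction places in $\Psi$.
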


\begin{proof}
  Let $\Psi$ be as in the first part of the statement. We assume $C = [l]$.
 We also assume that $\Psi$ does not contain an empty relation. If $\Psi \sim \minion{T}$, then the claim is obvious, otherwise we may assume that $\Pol(\Psi) = \Polid(\Psi)$ by~\Cref{thm:rel-cores}. 
  Let $S(\Psi)$ denote the set of all projections of relations in $\Psi$ onto their coordinates. 
  
  If $S(\Psi)$ does not contain a two-element set, then $\Polid(\Psi)$ is exactly the clone of all the idempotent operations on $C$. This clone is equivalent to $\idemp_1$ by \cite[Proposition 2.12]{submaximal}%
  \footnote{\cite{submaximal} gives a relational proof of this fact. Alternatively, homomorphisms in both directions can be obtained using \Cref{lem:homo-from-x}, choosing $X$ so that each $\xi(f)$ is idempotent. 
  }
  , which is equal to $\Pol(\Gamma)$, where $\Gamma$ consists of the two singleton unary relations. We now assume that $S(\Psi)$ contains a two-element set.

  We start by defining a multisorted ``essentially Boolean'' set $\tuple{A}$. It will be convenient to index the sorts by selected subsets of $C$ instead of natural numbers. Also, the sorts will not necessarily be equal to $\{0,1\}$, they will be general two-element or one-element sets. Note that the one-element sorts can be safely removed after the construction (adjusting the relations accordingly) and elements renamed, making the multisorted set truly Boolean according to our definition. With these conventions, we define 
  $$
  \tuple{A} = (A_P)_{P \in S(\Psi)}, \qquad \text{where } A_P = P.
  $$
  We continue by defining $\Gamma$. For every relation $R \in \Psi$, say of  arity $r$, we include to $\Gamma$ essentially the same relation $\tilde{R} = R$, but now regarded as a relation on $\tuple{A}$:
  $$
  \tilde{R} \subseteq \pi^r_1(R) \times \pi^r_2(R) \times \dots \times \pi^r_r(R) \qquad
  \text{of type } (\pi^r_1(R), \pi^r_2(R), \dots, \pi^r_r(R)).
  $$
  We also include to $\Gamma$ all singleton unary relations of all types, in particular $\Pol(\Gamma) = \Polid(\Gamma)$. This finishes the definition of $\Gamma$. 
  
  Since each $f \in \Pol(\Psi)$, say of arity $n$, preserves each $P \in S(\Psi)$, it makes sense to define $f_P: P^n \to P$ by restricting the domain of $f$ to $P^n$ and codomain to $P$. The collection $\xi = (\xi^{(n)})_n$ defined by $\xi^{(n)}(f) = (f_P)_{P \in S(\Psi)}$ is then easily seen to be a homomorphism $\Pol(\Psi) \to \Pol(\Gamma)$.

  To prove the other inequality $\Pol(\Gamma) \leq \Pol(\Psi)$ we will apply \Cref{lem:homo-from-x}. First a definition. For an $l$-ary operation $g$ on a finite set $B$ we say that $i \in [l]$ is an \emph{essential coordinate} if $g$ depends on it, formally, $g(b_1, \dots, b_{i-1}, b_i, b_{i+1}, \dots, b_l) \neq g(b_1, \dots, b_{i-1}, b', b_{i+1}, \dots, b_l)$ for some $b_1, \dots, b_l, b' \in B$. We define $\ess(g) \subseteq [l]$ as the set of all essential coordinates of $g$. 
  Note that if $g$ is idempotent and $|B| \geq 2$, then $\ess(g)$ is nonempty. 
  Also notice that if $g$ is idempotent and $\ess(g) \subseteq \{i\}$, then $g(b_1, \dots, b_l) = b_i$. 
  
  For an $l$-ary $\tuple{f} = (f_P)_{P \in S(\Psi)} \in \Pol(\Gamma)=\Polid(\Gamma)$ we define $X(\tuple{f}) \in C$ as follows.
  If there exists $Q \in S(\Psi)$ such that $\ess(f_P) \subseteq Q$ for every $P \in S(\Psi)$, then we set $X(\tuple{f}) = f_Q(\tuple{d}^Q)$, where $\tuple{d}^Q$ is any tuple in $Q^l$ such that $d^Q_i = i$ for both $i \in Q$ (the value $f_Q(\tuple{d}^Q)$ only depends on these coordinates, so other coordinates of $\tuple{d}^Q$ are irrelevant). There may be more such sets $Q$: this happens only if $\cup_P \,\ess(f_P) = \{i\}$ for some $i \in [l]$ (the union cannot be empty because some $P \in S(\Psi)$ has two elements). But then for any $Q$ containing $\{i\}$, we have $f_Q(\tuple{d}^Q) = i$ by the remark at the end of the last paragraph, so the definition makes sense nevertheless. Finally, if there is no such $Q$ we define $X(\tuple{f}) \in C$ arbitrarily. 

  \Cref{lem:homo-from-x} guarantees that the collection $\xi$ defined by
   $(\xi^{(n)}(\tuple{f}))(\tuple{c}) = X(\tuple{f}^{(\tuple{c})})$ (for $n \in \mathbb{N}$, $\tuple{f} \in \Pol^{(n)}(\Gamma)$, $\tuple{c} \in C^n$) preserves minors. It remains to verify that $g=\xi^{(n)}(\tuple{f})$ preserves every $R \in \Psi$. For this, it is enough to consider values $g(\tuple{c})$ for tuples $\tuple{c}$ containing only elements from  $Q \in S(\Psi)$. But then  each component $f_P$ of $\tuple{f}$ satisfies $\ess(f_P^{(\tuple{c})}) \subseteq Q$ so $g(\tuple{c}) = X(\tuple{f}^{(\tuple{c})}) = f^{(\tuple{c})}_Q(\tuple{d}^Q) = f_Q(\tuple{d}^Q \circ \tuple{c}) = f_Q(\tuple{c})$. That $g$ preserves $R$ thus follows from $\tuple{f}$ preserving $\tilde{R}$, concluding the proof of the first part.

   For the second part, one can define $C$ and $\Psi$ so that the above construction produces $\Gamma'$ essentially equal to $\Gamma$. We omit the simple details. 
\end{proof}

As an immediate consequence of \Cref{thm:main} and \Cref{thm:translation}, we reach the goal of this section.

\begin{theorem} \label{thm:main-small}
  Let $\Psi$ be a set of at most binary relations with small projections on a finite set.  Then $\Pol(\Psi)$ is equivalent to exactly one of the minion cores $\minion{T}$, $\minAk$, \dots listed in \Cref{thm:main}, and all of these minion cores arise in this way.
\end{theorem}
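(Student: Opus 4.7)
The plan is to derive \Cref{thm:main-small} by straightforwardly composing \Cref{thm:main} and \Cref{thm:translation} with $m=2$, noting that both halves of the translation preserve the arity bound.

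For the forward direction, I would start with an arbitrary set $\Psi$ of at most binary relations with small projections on a finite set $C$. Applying the first part of \Cref{thm:translation} with $m=2$, I obtain a set $\Gamma$ of at most binary relations on some multisorted Boolean set $\tuple{A}$ such that $\Pol(\Psi) \sim \Pol(\Gamma)$. Then \Cref{thm:main} applied to $\Gamma$ yields that $\Pol(\Gamma)$, and hence $\Pol(\Psi)$, is equivalent to exactly one of the minion cores $\minion{T}$, $\minAk$, $\minBk$, $\minCkp$, $\minDk$, $\minBinf$, $\minCinf$, $\minDinf$ ($k \in \mathbb{N}$). Uniqueness of the equivalence class comes for free from \Cref{thm:main}, which established that these minion cores are pairwise inequivalent.

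For the converse direction, I would start with any minion $\minion{M}$ from the list. By \Cref{thm:main}, there exists a set $\Gamma$ of at most binary relations on a multisorted Boolean set such that $\minion{M} \sim \Pol(\Gamma)$. The second part of \Cref{thm:translation} (again with $m=2$) then provides a set $\Psi$ of at most binary relations with small projections on a finite set such that $\Pol(\Gamma) \sim \Pol(\Psi)$, so $\minion{M} \sim \Pol(\Psi)$, completing the proof.

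The only thing that requires even a moment's attention is to confirm that \Cref{thm:translation} genuinely preserves the bound ``at most binary''; this is exactly why the arity parameter $m$ is carried explicitly through its statement. There is no real obstacle here: the theorem is stated so that the translation in both directions keeps the arity cap intact, and small projections are automatic on the multisorted Boolean side because each sort has size at most two. Consequently the combination of \Cref{thm:main} and \Cref{thm:translation} yields the result with no further work.
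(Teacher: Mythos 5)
Your proposal is correct and matches the paper exactly: the paper states Theorem~\ref{thm:main-small} as an immediate consequence of Theorems~\ref{thm:main} and~\ref{thm:translation}, and your write-up simply spells out the two compositions (one per direction) that make this immediate.
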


\section{Conclusion} \label{sec:conclusion}

\Cref{fig:order} summarizes our main result. It shows all (up to minion isomorphisms) minion cores of multisorted Boolean clones determined by binary relations and the minion homomorphism order between them (\Cref{thm:main}). The same lattice is obtained by considering clones on finite sets determined by binary relations with small projections (\Cref{thm:main-small}).

\begin{figure}[ht]
\resizebox{12cm}{!}{\includegraphics{picture.tikz}}
\caption{Minion cores of multisorted Boolean clones determined by binary relations.} 
\label{fig:order}
\end{figure}

One direction for future work is to characterize (up to minion homomorphisms) multisorted Boolean \emph{minions} determined by pairs of unary or binary relations; examples of such minions are the minion cores we found. Are there any others? 

A different direction is to stay within multisorted clones but consider relations of arity greater than two. An ongoing work of R. Ol\v s\'ak aims to achieve the characterization in the case that all relations ''live'' on a single sort, that is, types of all relations are constant tuples.

\ifau
\section{Declarations}

\subsection*{Ethical Approval}

Not applicable.

\subsection*{Competing interests}

The authors declare that they have no conflict of interest.

\subsection*{Authors' contributions}
The authors contributed equally to this work.

\subsection*{Funding}
Both authors were funded by the European Union (ERC, POCOCOP, 101071674). Views and opinions expressed are however those of the authors only and do not necessarily reflect those of the
European Union or the European Research Council Executive Agency. Neither the European Union nor the granting authority
can be held responsible for them.

\subsection*{Data availability}
Data sharing not applicable to this article as datasets were neither generated nor analysed.

\fi

\bibliographystyle{splncs04}
\bibliography{MultiBoolean}

\end{document}